\def\sqr#1#2{{\vcenter{\vbox{\hrule height.#2pt
              \hbox{\vrule width.#2pt height#1pt \kern#1pt \vrule width.#2pt}
              \hrule height.#2pt}}}}
\def\n{\nu}
\def\3n{\negthinspace \negthinspace \negthinspace }
\def\2n{\negthinspace \negthinspace }
\def\1n{\negthinspace }
\def\cl{{\cal l}}
\def\no{\noindent}
\def\bs{\bigskip}
\def\q{\quad}
\def\qq{\qquad}
\def\ep{\epsilon}
\def\cd{\mathbb{C}^d}
\def\bn{\mathcal{N}}
\def\ba{\mathcal{A}}
\def\be{\mathcal{E}}
\def\cd{\cdot}
\def\dim{\hbox{\rm dim$\,$}}
\def\supp{\hbox{\rm supp$\,$}}
\def\cl{\overline}
\def\Re{{\mathop{\rm Re}\,}}
\def\Im{{\mathop{\rm Im}\,}}
\def\({\Big (}
\def\){\Big )}
\def\[{\Big[}
\def\]{\Big]}
\def\={\buildrel \triangle \over =}
\def\be{\begin{equation}}
\def\bel{\begin{equation}\label}
\def\ee{\end{equation}}
\def\bea{\begin{eqnarray}}
\def\eea{\end{eqnarray}}
\def\bt{\begin{theorem}}
\def\et{\end{theorem}}
\def\bc{\begin{corollary}}
\def\ec{\end{corollary}}
\def\bl{\begin{lemma}}
\def\el{\end{lemma}}
\def\bp{\begin{proposition}}
\def\ep{\end{proposition}}
\def\br{\begin{remark}}
\def\er{\end{remark}}
\def\ba{\begin{array}}
\def\ea{\end{array}}
\def\bd{\begin{definition}}
\def\ed{\end{definition}}
\newtheorem{lemma}{Lemma}[section]
\newtheorem{remark}{Remark}[section]
\newtheorem{example}{Example}[section]
\newtheorem{theorem}{Theorem}[section]
\newtheorem{corollary}{Corollary}[section]
\newtheorem{definition}{Definition}[section]
\newtheorem{proposition}{Proposition}[section]
\begin{document}

\title{Smooth plurisubharmonic exhaustion functions on pseudo-convex domains in infinite dimensions}

\author{Zhouzhe Wang\thanks{
School of Mathematics,
Sichuan University, Chengdu 610064, China.  {\small\it
e-mail:} {\small\tt wangzhouzhe@stu.scu.edu.cn}.}~~~
  and~~~
Xu Zhang\thanks{
School of Mathematics, Sichuan University, Chengdu 610064, China. {\small\it
e-mail:} {\small\tt zhang$\_$xu@scu.edu.cn}.}}

\date{}
\maketitle

\begin{abstract}
In this paper, by modifying significantly the Friedrichs-Gross mollifier
technique and/or using the Lasry-Lions regularization technique together with some carefully chosen cut-off functions, for the first time we construct explicitly smooth exhaustion functions on any open subset and smooth plurisubharmonic exhaustion functions on any pseudo-convex domain in a typical Hilbert space, which enjoy delicate properties needed for the $L^{2}$ method in infinite-dimensional complex analysis.
\end{abstract}

\bs

\no{\bf 2020 Mathematics Subject Classification}. 26E15, 46G05, 46G20.

\bs

\no{\bf Key Words}. Exhaustion function, pseudo-convex domain, plurisubharmonic function,
Friedrichs-Gross mollifier, Lasry-Lions regularization.
\maketitle


\section{Introduction}
In the subject of complex analysis in several  variables, plurisubharmonic functions and pseudo-convex domains play fundamental roles (e.g., \cite{Hor90}).  Naturally, it is expected to extend these concepts/tools
to infinite-dimensional complex analysis. As far as we know, in \cite{Bre}, H. J. Bremermann first considered plurisubharmonic functions  and  pseudo-convex domains in infinite-dimensional spaces. For further works on this topic, we refer to \cite[Chapters 4 and 5]{Her}, \cite[Chapters VIII--X, etc.]{Mujica}, \cite[Chapters 1 and 2]{Noverraz} and so on. In particular, for any nonempty open subset $O$ of a Banach space $X$, combining the related results in \cite{Dineen}, \cite{Gruman}, \cite{Gruman and Kiselman}, \cite{Hirschowitz} and \cite{Noverraz}, people obtained one of the most important results about infinite-dimensional pseudo-convex domains  as follows:
 \begin{theorem}\label{1.1}
 {\rm  (\cite[Theorem 45.8, pp. 324--325]{Mujica})}	Let $X$ be a separable Banach space with the bounded approximation property (defined in \cite[Definition 27.3 (b), p. 196]{Mujica}). Then, $O$ is a pseudo-convex domain of $X$ if and only if one of the following conditions holds true:
 	\begin{itemize}
 		\item $O$ is a domain of existence (defined in \cite[Definition 10.8, p. 82]{Mujica}).
 		\item $O$ is a domain of holomorphy (defined in \cite[Definition 10.4, p. 81]{Mujica}).
 		\item $O$ is holomorphically convex (defined in \cite[Definition 11.3, p. 87]{Mujica}).
 	\end{itemize}
 \end{theorem}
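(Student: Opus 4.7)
The plan is to establish the equivalences via a cyclic chain of implications. The implication ``$O$ is a domain of existence $\Rightarrow$ $O$ is a domain of holomorphy'' is immediate: if $f \in \mathcal{H}(O)$ has $O$ as its domain of existence, then the singleton family $\{f\}$ witnesses $O$ as a domain of holomorphy. Hence the substantive content lies in the three remaining implications.

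First, I would prove ``domain of holomorphy $\Rightarrow$ holomorphically convex'' by a Cartan--Thullen type argument adapted to Banach spaces. The key quantitative input is that, for every compact $K \subset O$ and every $f \in \mathcal{H}(O)$, the Taylor expansion of $f$ at any point of the holomorphic hull $\widehat{K}_{\mathcal{H}(O)}$ converges in a ball of radius bounded below by $d(K,\partial O)$. If $\widehat{K}_{\mathcal{H}(O)}$ were not relatively compact in $O$, one could extract a sequence in it converging to $\partial O$; the uniform radius estimate would then force every $f \in \mathcal{H}(O)$ to admit a holomorphic extension past some boundary point, contradicting the domain-of-holomorphy hypothesis.

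Next, for ``holomorphically convex $\Rightarrow$ pseudo-convex'', I would show that $-\log d(\cdot,\partial O)$ (with a suitable notion of boundary distance on $X$) is plurisubharmonic on $O$. Holomorphic convexity supplies, along any complex affine line $L$ meeting $O$, enough bounded holomorphic functions on $O$ whose moduli majorize the reciprocal of the boundary distance on $L\cap O$; a maximum-principle argument then yields subharmonicity of $-\log d(\cdot,\partial O)$ restricted to $L\cap O$, hence plurisubharmonicity on $O$, which is precisely the definition of pseudo-convexity.

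The hard implication is the infinite-dimensional Levi problem, ``pseudo-convex $\Rightarrow$ domain of existence'', and it is here that the separability of $X$ and the bounded approximation property become decisive. I would fix a sequence $(P_n)_{n\geq 1}$ of finite-rank operators on $X$ with $\sup_n \|P_n\| < \infty$ and $P_n x \to x$ for every $x \in X$, set $X_n := P_n(X)$ and $O_n := O \cap X_n$, and observe that each $O_n$ is a pseudo-convex open subset of the finite-dimensional space $X_n$. The classical Oka--Bremermann--Norguet solution of the finite-dimensional Levi problem then furnishes holomorphic functions on $O_n$ with prescribed singular behaviour at $\partial O_n$; the Gruman--Kiselman $L^{2}$-technique sharpens these into functions with controlled growth, and a Hirschowitz-style diagonal/gluing construction finally assembles them into a single $f \in \mathcal{H}(O)$ whose domain of existence is exactly $O$. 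The main obstacle is precisely this last step: the gluing must be tight enough that the limit remains holomorphic on all of $O$ yet fails to extend across any boundary point, and this calls for smooth plurisubharmonic exhaustion functions on $O$ satisfying exactly the delicate quantitative properties that the present paper is devoted to constructing.
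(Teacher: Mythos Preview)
The paper does not contain a proof of this statement at all. Theorem~1.1 is quoted verbatim from Mujica's textbook (with the explicit citation ``[Theorem~45.8, pp.~324--325]'') as background material in the Introduction; the authors neither prove it nor sketch its proof, they merely invoke it to motivate why pseudo-convex domains in $\ell^2$ are worth studying. There is therefore nothing in the paper against which your proposal can be compared.

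Your outline of the cyclic chain of implications is a reasonable sketch of how the result is actually established in the literature (Dineen, Gruman, Gruman--Kiselman, Hirschowitz, Noverraz), and the first three implications are essentially correct in spirit. However, your final paragraph contains a genuine misconception: you claim that the gluing step in the infinite-dimensional Levi problem ``calls for smooth plurisubharmonic exhaustion functions on $O$ satisfying exactly the delicate quantitative properties that the present paper is devoted to constructing.'' This is backwards. The Levi problem for separable Banach spaces with the bounded approximation property was settled in the 1970s, long before the present paper, and those proofs do \emph{not} require the $C^{\infty}_{F^{\infty}}$ exhaustion functions constructed here. The present paper's constructions serve a different purpose entirely---namely, to feed into the authors' $L^2$ estimates for the $\overline{\partial}$ operator in \cite{WYZ}---and Theorem~1.1 is logically \emph{prior} to, not a consequence of, the results of this paper. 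Suggesting otherwise would make the paper circular.
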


Particularly, the Hilbert space $\ell^{2}$ (See Section \ref{section2} for most of the notations/notions used in this paper) satisfies the condition of Theorem \ref{1.1}. Therefore, many open sets (such as all convex open sets) in $\ell^{2}$ are domains of holomorphy (Thus they are also pseudo-convex by Theorem \ref{1.1}) (See \cite[Corollary 10.7, p. 81]{Mujica}). In the rest of this paper, unless otherwise stated, we will fix an arbitrarily nonempty open set $V$ of $\ell^{2}$.

It is well-known that the classical $L^2$ method (to solve the $\overline{ \partial} $ equation), developed systematically by L.~H\"ormander (\cite{Hor90}), is a basic tool in complex analysis of several variables, which can be applied to solve many problems in complex analysis, complex geometry, algebraic geometry and so on.
One of the key techniques in the $L^{2} $ method is to prove the existence of smooth plurisubharmonic exhaustion functions on pseudo-convex domains (See \cite[Theorem 2.6.11, p. 48]{Hor90}). In the setting of finite dimensional spaces, by means of the translation invariance of the Lebesgue measure and the local compactness, this can be done easily because one may approximate any plurisubharmonic function by smooth plurisubharmonic functions (See \cite[Theorem 2.6.3 at p. 45 and Theorem 1.6.11 at p. 19, and the proof of Theorem 2.6.11 at pp. 48--49]{Hor90}).

Because of the success of the classical $L^2$ estimates in finite dimensions, it is quite natural to expect that such sort of estimates should be extended to the infinite dimensional setting. Nevertheless, due to lack of nontrivial translation invariance measures and local compactness in infinite dimensions, this is a longstanding unsolved problem before our previous works (See \cite{WYZ, YZ} for more details). Particularly, in \cite{WYZ} we establish $L^2$ estimates and existence theorems for the $\overline{\partial}$ operators in each pseudo-convex domain $V$ of $\ell^{2}$ by assuming the existence of the corresponding smooth plurisubharmonic exhaustion function in the space $C^{\infty }_{F}(V)$.

One can find some works on the constructions of smooth plurisubharmonic exhaustion functions on pseudo-convex domains in Banach spaces with unconditional bases (e.g., \cite{Pat, AZ}). Particularly, by \cite[Theorem 1.2]{AZ} and the proof of  \cite[Corollary 34.6, p. 248]{Mujica}, it is easy to see that, for any pseudo-convex domain $V$ of $\ell^{2}$ there is a holomorphic function $f:\;V\to\ell^2$ such that $\left| \left|f\;\right|  \right|$ is a plurisubharmonic exhaustion function on $V$. Nevertheless, it is unclear whether $\left| \left|f\;\right|  \right|\in C^{\infty }_{F}(V)$ or not. Indeed, to the best of our knowledge, completely different from the setting of finite dimensions, in the previous literatures there exist no studies on the existence of smooth plurisubharmonic/exhaustive functions $h(\cdot)$ on general open subsets $U$ of an infinite dimensional space so that $h(\cdot)$ itself and each of its higher order partial derivatives are bounded on any subset which is uniformly included in $U$. The main aim of this paper is to fill this gap and provides particularly a new method to construct explicitly smooth plurisubharmonic exhaustion functions on each pseudo-convex domain of $\ell^{2}$. To do this, the main difficulties are as follows:
\begin{itemize}
		\item As remarked in \cite{YZ}, an essential difficulty in infinite-dimensional analysis is how to treat the differential (especially the higher order differential) of a (possibly vector-valued) function. In this respect, the most popular notions are  Fr\'echet's derivative and G\^ateaux's derivative, which coincide whenever one of them is continuous. However, higher order (including the second order) Fr\'echet's derivatives are multilinear mappings, which are usually not convenient to handle analytically;

	\item Since there exists no nontrivial translation invariant measure in infinite dimensions, one cannot simply regularize a locally integrable function by the usual convolution technique. In \cite[Propositions 2.4 and 3.1]{WYZ}, we have introduced a smoothing function method by projecting first the function under consideration into finite dimensional spaces and then convolving the obtained functions with suitably chosen smooth functions in finite dimensions. However, any function with finite number of variables cannot become an exhaustive function on an infinite-dimensional domain, and therefore such a method does not work for the construction of smooth plurisubharmonic exhaustion functions in $\ell^2$;

	\item A key step in constructing smooth plurisubharmonic exhaustion functions on pseudo-convex domains in the finite dimensional space $\mathbb{C}^m$ ($m\in \mathbb{N}$) is that
\begin{equation}\label{lsh112361xx}
		\inf_{\sum\limits^{m}_{i=1} \left| w_{i}\right|^{2}  =1,w_{1},\cdots ,w_{m}\in \mathbb{C} ,\zeta\in A} \sum_{i,j=1}^m \partial_{i} \overline{\partial_{j} } \eta \left( \zeta\right)  w_{i}\overline{w_{j}} >-\infty ,
\end{equation}
where $A$ is a bounded set in $\mathbb{C}^m$ and $\eta $ is a $C^{\infty }$-function on a domain containing $\cl A$. By the local compactness in finite dimensions, the above inequality (\ref{lsh112361xx}) is trivially correct. However, in the setting of infinite dimensions, what we need is the following:
   \begin{equation}\label{lsh112361}
		\inf_{n\in \mathbb{N} ,\sum\limits^{n}_{i=1} \left| w_{i}\right|^{2}  =1,w_{1},\cdots ,w_{n}\in \mathbb{C} ,\zeta\in A} \sum_{i,j=1}^n  \partial_{i} \overline{\partial_{j} } \eta \left( \zeta\right)  w_{i}\overline{w_{j}} >-\infty,
	\end{equation}
for which $A$ is a bounded set in the infinite dimensional space under consideration and $\eta $ is a $C^{\infty }$-function on a domain containing $\cl A$.
	In infinite dimensional spaces, bounded sets are NOT necessarily relatively compact anymore. On the other hand, the inequality \eqref{lsh112361} also needs to be independent of the dimension $n$. Hence, unlike (\ref{lsh112361xx}), the inequality (\ref{lsh112361}) is NOT guaranteed unless both $A$ and $\eta$ therein are deliberately chosen. This is the most difficult problem to be overcome in the present work.

\end{itemize}
The main contributions of this paper are as follows: \begin{itemize}
	\item Instead of other notions of differentiability in the literatures, in this work we employ the one introduced in our previous papers \cite{WYZ, YZ} (which will be recalled in Section \ref{section2}). By this, together with some modification of the classical Friedrichs mollifier
technique and the Gross mollifier technique introduced in \cite{Gro67}, we successfully construct in Theorem \ref{pmgjsh236118} a smooth approximation sequence for each Lipschitz function in $\ell^2$, which is the basis for us to obtain smooth exhaustive/plurisubharmonic functions (See Theorems \ref{ghqjhshczx23613} and \ref{jntyshxnty23614}, respectively). Combining the main result in this paper (i.e., Corollary \ref{cor5.1}) and \cite[Theorems 4.2 and 5.1]{WYZ}, it is easy to see that our notion of differentiability and the corresponding function spaces are useful to solve some difficult problems arising in infinite-dimensional complex analysis.

	\item  We employ the Lasry-Lions regularization technique, i.e., the method of inf-sup-convolution introduced in \cite{LaL86} (See also \cite [Appendices C, pp. 347--351]{Prato}) to prove that there exists a Lipischitz continuous, semi-anti-plurisubharmonic exhaustion function on any open subset of $\ell^2$ (See Definition \ref{bthshdy23611} and Theorem \ref{shtqj23612}), via which we successfully overcome the problem of requiring a dimension-free, finite lower bound in the second-order differential inequality (\ref{lsh112361}).

	\item As a byproduct, we construct some smooth exhaustion functions (with fine properties) on any open subset of $\ell^{2}$ (See Theorems \ref{ghqjhshczx23613} and \ref{shtqj23612}). On the other hand, in \cite[Definition 2.3]{WYZ} we require the plurisubharmonic exhaustion function $\eta \in C^{\infty }_{F}(V)$ while in Theorem \ref{jntyshxnty23614} we construct such a function $\eta \in C^{\infty }_{F^{\infty}}(V)$. Both the smooth functions introduced by us and the method to construct them have independent interest, which may be applied to solve many other problems, for example, Theorem \ref{jntyshxnty23614} will play a crucial role in our forthcoming work on the regularity of solutions to the $ \overline{ \partial} $ equation in infinite dimensions.
\end{itemize}

The rest of this paper is organized as follows: In Section \ref{section2}, we collect some notations, definitions and some simple results which will be useful later. Section \ref{sec3} is addressed to a study of exhaustion functions and Lipschitz functions, while Section \ref{section4} is mainly for the construction of semi-anti-plurisubharmonic exhaustion functions on any open subset of $\ell^2$. In the last section, i.e., Section \ref{section5}, we shall characterize pseudo-convex domains by smooth plurisubharmonic functions.

\section{Preliminaries}\label{section2}

For the readers' convenience, we quickly recall some notations and definitions in \cite{Mujica, WYZ, YZ}, as well as some other preliminary results.

Denote by $\mathbb{R}$ and $\mathbb{C}$ respectively the sets (or fields, or Euclidean spaces with the usual norms) of all real and complex numbers, and by $\mathbb{N}$ the set of all positive integers. Suppose that $G$ is a
nonempty set and $(G,\cal J)$ is a topological space on $G$. The smallest $\sigma$-algebra $\mathscr{B}(G)$
generated by all sets in $\cal J$ is called the
Borel $\sigma$-algebra of
$G$. For any $\sigma$-finite Borel measure $\nu$ on $G$ and $p\in [1,\infty)$, denote by $L^p(G,\nu)$ the Banach space of all
(equivalence classes of) complex-valued Borel-measurable functions $f$ on $G$ for which $\int_G|f|^pd\nu<\infty$ (Particularly, $L^2(G,\nu)$ is a Hilbert space with the canonical inner product).
For any subset $A$ of $G$, denote by $A^o$ the set of all interior points of $A$, by $\overline{A}$ the closure of $A$ in $G$, by $\partial A$ the boundary of $A$, and  by $\chi_A(\cdot)$ the characteristic function of $A$ in $G$.

The following elementary result must be known but we do not find an exact reference.

\begin{proposition}\label{collected}
Any linear topological space is path connected, and hence connected.
\end{proposition}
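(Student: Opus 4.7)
The plan is to exhibit an explicit linear path between any two prescribed points, and then invoke the standard topological fact that path-connectedness implies connectedness.

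More concretely, fix any topological vector space $X$ over $\mathbb{R}$ or $\mathbb{C}$, and let $x,y\in X$ be arbitrary. I would define the map $\gamma:[0,1]\to X$ by
\[
\gamma(t)=(1-t)x+ty,\qquad t\in[0,1],
\]
so that $\gamma(0)=x$ and $\gamma(1)=y$. The work is then to verify that $\gamma$ is continuous as a map from $[0,1]$ (with the usual subspace topology inherited from $\mathbb{R}$) into $X$. This is where one uses the defining axioms of a linear topological space: scalar multiplication $(\lambda,z)\mapsto\lambda z$ from the scalar field times $X$ into $X$ is jointly continuous, and addition $(z_{1},z_{2})\mapsto z_{1}+z_{2}$ from $X\times X$ into $X$ is jointly continuous. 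Consequently, the maps $t\mapsto(1-t)x$ and $t\mapsto ty$ are continuous (each is a composition of a continuous scalar-valued map into the first factor with a constant map into the second factor, followed by scalar multiplication), and their sum $\gamma$ is therefore continuous.

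Having produced a continuous path connecting arbitrary $x,y\in X$, the space $X$ is path connected by definition. The second assertion then follows from the standard topological fact that every path-connected space is connected: if $X$ were a disjoint union $X=U\cup W$ of two nonempty open sets, pick $x\in U$ and $y\in W$ and a continuous path $\gamma$ joining them; then $\gamma^{-1}(U)$ and $\gamma^{-1}(W)$ would disconnect $[0,1]$, contradicting the connectedness of the interval.

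There is essentially no obstacle here; the only place one must be a little careful is the joint-versus-separate continuity issue for scalar multiplication and addition, but in a linear topological space this is built into the definition, so the linear path $\gamma$ is automatically continuous. Thus the proposition reduces to invoking the topological vector space axioms and the classical implication \textit{path connected} $\Rightarrow$ \textit{connected}.
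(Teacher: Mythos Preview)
Your proof is correct and follows essentially the same approach as the paper: both define the straight-line path $t\mapsto (1-t)x+ty$ between two arbitrary points and observe that the topological vector space axioms make it continuous. Your version is in fact more careful, spelling out why the path is continuous and why path-connectedness implies connectedness, whereas the paper simply asserts these.
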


\begin{proof}
Let $Y$ be a linear topological space. Pick any $p,q\in Y$. Then, the continuous map $\alpha: [0,1]\to Y$ defined by $\alpha(t)=tp+(1-t)q$ satisfies that $\alpha(0)=p$ and  $\alpha(1)=q$, which indicates that $Y$ is path connected, and hence connected.
\end{proof}

Recall that $O$ is a nonempty open subset of the Banach space $X$. Denote by $C(O)$ the set of all complex-valued continuous functions on $O$. By \cite[Definition 34.3, p. 247]{Mujica} and \cite[Definition 17.1, p. 335]{Rud87} (and noting \cite[Corollary 1.6.5, p. 18]{Hor90}), we have the following concept.

\begin{definition}\label{dchcthhshdy23615}
A real-valued function $f\in C(O)$ is said to be plurisubharmonic (resp., anti-plurisubharmonic) (on $O$) if
		$$
f\left( a\right)  \leqslant \frac{1}{2\pi } \int^{2\pi }_{0} f\left( a+e^{\sqrt{-1}\theta }b\right)  d\theta
 $$
holds for any $a\in O$ and $b\in X$ with $\left\{ a+e^{\sqrt{-1}\theta} b:\;\theta \in  [0,2\pi]\right\}  \subset O$ (resp., $-f$ is plurisubharmonic). Particularly, $f$ is called subharmonic if $X=\mathbb{C}$ and $f$ is plurisubharmonic.
\end{definition}


\begin{proposition}\label{dchcthhshzyxwdxzh23616}
Plurisubharmonic functions enjoy the following properties:
	\begin{itemize}
		\item[$(1)$]	A real-valued function $f\in C(O)$ is plurisubharmonic if and only if so is the restriction of $f$ to $O\bigcap M$ for each finite dimensional subspace $M$ of $X$ (cf., Assertion (c) in \cite[Proposition 34.5, pp. 247--248]{Mujica}).
			\item[$(2)$]	If both $f$ and $g$ are plurisubharmonic functions on $O$ and $a,b\geqslant0$, then so is $af+bg$ (cf., Assertion (a) in \cite[Proposition 34.5, p. 247]{Mujica}).

				\item[$(3)$]	If $f$ is plurisubharmonic on $O$ and $\varphi:\mathbb{R}\rightarrow \mathbb{R}$ is a convex, increasing function, then so is $\varphi(f)$ (cf., \cite[Proposition 34.12, pp. 253--254]{Mujica}).
			
	\end{itemize}
\end{proposition}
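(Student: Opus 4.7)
The plan is to verify each of the three assertions directly from Definition \ref{dchcthhshdy23615}, exploiting the observation that the defining sub-mean-value inequality only ever sees the two-real-dimensional circle $\{a+e^{\sqrt{-1}\theta}b:\theta\in[0,2\pi]\}$, which already lies inside the complex-linear span of $a$ and $b$. This observation drives the finite-dimensional reduction in (1); assertion (2) reduces to linearity of the integral, and (3) to Jensen's inequality combined with the monotonicity of $\varphi$.

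For (1), the forward implication is immediate: if $M$ is a finite-dimensional (complex) subspace of $X$ and $a\in O\cap M$, $b\in M$ satisfy $\{a+e^{\sqrt{-1}\theta}b\}\subset O\cap M$, then this circle in particular lies in $O$, so the defining inequality for $f$ on $O$ transfers verbatim to $f|_{O\cap M}$, with continuity of the restriction automatic. For the converse, given $a\in O$ and $b\in X$ with $\{a+e^{\sqrt{-1}\theta}b\}\subset O$, I would take $M=\mathrm{span}_{\mathbb{C}}\{a,b\}$, a finite-dimensional complex subspace of $X$ satisfying $a\in O\cap M$, $b\in M$, and with the entire test circle contained in $O\cap M$. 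Applying the assumed plurisubharmonicity of $f|_{O\cap M}$ at the pair $(a,b)$ then yields the desired inequality in $O$.

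Assertion (2) is almost immediate: continuity of $af+bg$ follows from continuity of $f$ and $g$, while multiplying the sub-mean-value inequality for $f$ by $a\geq 0$, the one for $g$ by $b\geq 0$, and adding gives the analogous inequality for $af+bg$ by linearity of the integral.

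For (3), continuity of $\varphi(f)$ follows from continuity of $f$ together with the fact that any convex function $\varphi:\mathbb{R}\to\mathbb{R}$ is automatically continuous. For the sub-mean-value property, I would fix $a\in O$ and $b\in X$ with $\{a+e^{\sqrt{-1}\theta}b\}\subset O$ and chain two inequalities: first, since $\varphi$ is increasing, applying it to the sub-mean-value inequality for $f$ gives $\varphi(f(a))\leq\varphi\bigl(\tfrac{1}{2\pi}\int_0^{2\pi}f(a+e^{\sqrt{-1}\theta}b)\,d\theta\bigr)$; then Jensen's inequality, using the convexity of $\varphi$, pushes $\varphi$ inside the integral to yield $\varphi(f(a))\leq\tfrac{1}{2\pi}\int_0^{2\pi}\varphi(f(a+e^{\sqrt{-1}\theta}b))\,d\theta$, as required. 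No substantive obstacle is expected anywhere; the entire proposition is the remark that plurisubharmonicity is a one-complex-variable notion tested along complex affine lines.
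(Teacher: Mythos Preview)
Your argument is correct in all three parts; the finite-dimensional reduction via $M=\mathrm{span}_{\mathbb{C}}\{a,b\}$, linearity of the integral, and the monotonicity-plus-Jensen chain are exactly the standard verifications. The paper itself does not supply a proof of this proposition but simply cites the corresponding assertions in Mujica, so there is nothing further to compare.
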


The following result is a special case of \cite[Theorem 34.8, pp. 249--250]{Mujica}:
\begin{proposition}\label{dchcthhshjbd23617}
	A real-valued function $f\in C(O)$ is plurisubharmonic if and only if for each $a\in O$, $b\in X$ and $\delta>0$, there exists $r\in (0,\delta)$ such that  $\big\{ a+re^{\sqrt{-1}\theta} b:\;\theta \in  [0,2\pi]\big\}  \subset O$ and
 $$
 f\left( a\right)  \leqslant \frac{1}{2\pi } \int^{2\pi }_{0} f\left( a+re^{\sqrt{-1}\theta }b\right)  d\theta .
 $$
\end{proposition}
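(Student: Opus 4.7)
The plan is to reduce the infinite-dimensional statement to its classical one-variable counterpart via the slice characterization in Proposition~\ref{dchcthhshzyxwdxzh23616}(1), and then to invoke the standard ``weak'' mean-value characterization of subharmonicity in $\mathbb{C}$.

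The forward direction is essentially immediate. Given plurisubharmonic $f$ and any $a\in O$, $b\in X$, $\delta>0$, I would use the openness of $O$ to choose $r\in(0,\delta)$ small enough that the entire compact circle $\{a+re^{\sqrt{-1}\theta}b:\theta\in[0,2\pi]\}$ lies in $O$, and then apply Definition~\ref{dchcthhshdy23615} with the vector $rb$ in place of $b$ to obtain the stated inequality.

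For the reverse direction, by Proposition~\ref{dchcthhshzyxwdxzh23616}(1) it suffices to verify that $f|_{O\cap M}$ is plurisubharmonic for every finite-dimensional subspace $M\subset X$. Fixing such an $M$ and any $a\in O\cap M$, $b\in M$, I would form the continuous function $g(\zeta)=f(a+\zeta b)$ on the planar open set $U_{a,b}=\{\zeta\in\mathbb{C}:a+\zeta b\in O\}$. The hypothesis translates to the assertion that for each $\zeta_{0}\in U_{a,b}$ and each $\delta>0$, $g$ satisfies the sub-mean-value inequality at $\zeta_{0}$ for at least one radius $r\in(0,\delta)$. A classical one-variable result (cf.~\cite[Theorem~1.6.11]{Hor90}) then guarantees that $g$ is subharmonic on $U_{a,b}$; since this holds for every choice of $a$ and $b$, the restriction of $f$ to every complex line in $O\cap M$ is subharmonic, i.e., $f|_{O\cap M}$ is plurisubharmonic.

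The main obstacle is the classical one-variable fact just invoked: namely, that verifying the sub-mean-value property at every point for merely \emph{some} radius in each arbitrarily small interval $(0,\delta)$ already forces subharmonicity. The standard argument compares $g$ with the Poisson extension $h$ of its boundary values on a closed disk $\overline{D}\subset U_{a,b}$; if $g-h$ were positive somewhere in the interior of $D$, it would attain a strictly positive maximum at some interior point $\zeta^{*}$, and applying the ``some-$r$'' mean-value inequality to $g$ at $\zeta^{*}$ together with the mean-value equality for the harmonic $h$ would contradict maximality at $\zeta^{*}$. Everything else is routine bookkeeping; alternatively, since the proposition is asserted to be a special case of \cite[Theorem~34.8, pp.~249--250]{Mujica}, one could simply cite that result directly.
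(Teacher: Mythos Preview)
Your proof is correct. The paper does not supply its own argument for this proposition; it simply records it as a special case of \cite[Theorem~34.8, pp.~249--250]{Mujica}, which you yourself note at the end. So you are providing strictly more than the paper does, and what you provide is the standard reduction-to-slices argument together with the classical ``some-radius'' maximum-principle characterization of subharmonicity in one complex variable.

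Two small remarks. First, the passage through Proposition~\ref{dchcthhshzyxwdxzh23616}(1) and finite-dimensional subspaces is harmless but unnecessary: for arbitrary $a\in O$ and $b\in X$, the hypothesis applied at the shifted center $a+\zeta_{0}b$ with the same direction vector $b$ already gives the some-radius sub-mean-value property for $g(\zeta)=f(a+\zeta b)$ at every $\zeta_{0}\in U_{a,b}$, so the slice argument goes through directly without first restricting to a finite-dimensional $M$. Second, your maximum-principle sketch is essentially right, but ``contradict maximality at $\zeta^{*}$'' should be made precise: the cleanest version chooses $\zeta^{*}$ in the (compact, interior) level set $\{g-h=M\}$ at \emph{maximal} distance from the center of $D$, and then observes that the some-$r$ inequality forces $g-h\equiv M$ on an entire small circle about $\zeta^{*}$, producing a point of the level set strictly farther from the center --- the contradiction. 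This is exactly the argument behind \cite[Theorem~1.6.3]{Hor90}, and the inequality on disks then follows as in \cite[Theorem~1.6.2 and Corollary~1.6.5]{Hor90}.
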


 For any $z=x+\sqrt{-1}y\in \mathbb{C}$ with $x,y\in \mathbb{R}$, denote by $\overline{z}$ the complex conjugation of $z$, and write $\Re z\triangleq\frac{z+\overline{z} }{2} $, $\Im z\triangleq\frac{z-\overline{z} }{2\sqrt{-1}}$, $\frac{\partial }{\partial z} \triangleq\frac{1}{2}\left( \frac{\partial }{\partial x}-\sqrt{-1}\frac{\partial }{\partial y}\right)$ and
$\frac{\partial }{\partial \overline{z}} \triangleq\frac{1}{2}\left( \frac{\partial }{\partial x}+\sqrt{-1}\frac{\partial }{\partial y}\right)$.
 By \cite[(3.12), p. 39]{Ohsawa02}, we have
\begin{proposition}\label{cthgh236129}
  Assume that $W$ is a nonempty open set of $\mathbb{C}$ and $f$ is a real-valued, second order continuous differentiable function on $W$.  Then, $f$ is  subharmonic on $W$ if and only if  $$\Delta f\left( z\right)  \triangleq 4\frac{\partial^{2} f\left( z\right)  }{\partial z\partial \overline{z} } \geqslant 0,\q\forall\;z\in W.$$
\end{proposition}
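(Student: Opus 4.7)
The plan is to reduce the statement to the classical Laplacian characterization by first observing that $\Delta f \triangleq 4\partial^2 f/(\partial z\partial \overline{z})$ coincides with the usual real Laplacian $\partial_x^2 f + \partial_y^2 f$ under $z = x+\sqrt{-1}y$; this follows from a direct calculation using the expressions of $\partial/\partial z$ and $\partial/\partial \overline{z}$ recalled in the excerpt, together with the fact that $f$ is real-valued so that the mixed imaginary terms cancel.

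The central analytic object is the circular mean
$$M(r) \triangleq \frac{1}{2\pi}\int_0^{2\pi} f\bigl(z_0 + re^{\sqrt{-1}\theta}\bigr)\, d\theta,$$
defined for each $z_0 \in W$ and every $r\geqslant 0$ such that $\overline{D(z_0,r)} \subset W$. The key technical step is to show that $M$ is $C^1$ on its domain of definition with
$$M'(r) = \frac{1}{2\pi r}\int_{D(z_0,r)} \Delta f\, dA, \qquad r > 0,$$
and that $M(r) \to f(z_0)$ as $r \to 0^+$. This follows from differentiation under the integral sign, which expresses $M'(r)$ as a boundary integral of the outward radial derivative of $f$, followed by the planar divergence theorem converting it to the area integral above.

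With this formula in hand, the sufficiency direction is immediate: when $\Delta f \geqslant 0$ everywhere, one has $M'(r) \geqslant 0$, so $M$ is nondecreasing and $f(z_0) \leqslant M(r)$ for every admissible $r$; a substitution $re^{\sqrt{-1}\theta} = e^{\sqrt{-1}(\theta - \arg b)} b$ (with $r = |b|$) rewrites this as exactly the mean-value inequality in Definition \ref{dchcthhshdy23615} for $X = \mathbb{C}$. For the necessity, I would argue by contradiction: if $\Delta f(z_0) < 0$ at some $z_0 \in W$, continuity yields an open disc $D(z_0, r_0) \subset W$ on which $\Delta f < 0$, and the formula above then forces $M'(r) < 0$ on $(0, r_0)$, so $M(r) < f(z_0)$ for every such $r$, contradicting Proposition \ref{dchcthhshjbd23617} applied at $a = z_0$ with $b = 1$.

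The only genuine obstacle is a careful justification of the formula for $M'(r)$, which requires standard manipulations but no ingredient beyond classical planar calculus; once it is established, both directions follow with essentially no further effort, and the entire argument remains within finite-dimensional complex analysis, invoking none of the infinite-dimensional machinery developed later in the paper.
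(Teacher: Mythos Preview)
Your argument is correct and entirely standard. One small refinement: in the sufficiency direction you establish $f(z_0)\leqslant M(r)$ only for those $r$ with $\overline{D(z_0,r)}\subset W$, so strictly speaking you have verified the local condition in Proposition~\ref{dchcthhshjbd23617} rather than Definition~\ref{dchcthhshdy23615} directly (the latter asks for the inequality over every circle contained in $W$, whose enclosed disc need not lie in $W$). Since you already invoke Proposition~\ref{dchcthhshjbd23617} for necessity, it is natural to invoke it for sufficiency as well, and then the argument closes cleanly.

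As for comparison with the paper: the paper does not give a proof at all. The proposition is stated with the preface ``By \cite[(3.12), p.~39]{Ohsawa02}, we have'' and is simply quoted from Ohsawa's textbook. Your proposal therefore supplies a complete self-contained argument where the paper relies on an external reference; the route you take (circular means, the divergence-theorem identity for $M'(r)$, and monotonicity) is precisely the classical one that the cited sources would contain.
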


When $\dim X<\infty$, $O$ is called a pseudo-convex domain in $X$ if there exists a plurisubharmonic function $\eta $ on $O$ so that $\{x\in O:\;\eta (x) < t\} $ is relatively compact in $O$ for all $t\in \mathbb{R}$ (See \cite[Theorem 2.6.7, p. 46]{Hor90}). Generally, in view of \cite[Definition 37.3, p. 274]{Mujica} and \cite[Corollary 37.6, p. 277]{Mujica}, we have the following notion.

\begin{definition}\label{xntyddy23618}
	$O$ is called a pseudo-convex domain in $X$, if for each finite dimensional subspace $M$ of $X$, $O\cap M$ is a pseudo-convex domain in $M$.
\end{definition}

 Write $\mathbb{N}_0\triangleq \mathbb{N}\cup \{0\}$ and $\mathbb{C}^{\infty}=\prod\limits_{i=1}^{\infty}\mathbb{C}$, which is the countable Cartesian product of $\mathbb{C}$, i.e., the space of sequences of complex numbers (Nevertheless, in the sequel we shall write an element $\textbf{z}\in \mathbb{C}^{\infty }$ explicitly as $\textbf{z}=( z_{i})^{\infty }_{i=1}$ rather than $\textbf{z}=\{ z_{i}\}^{\infty }_{i=1}$, where $z_i\in\mathbb{C}$ for each $i\in \mathbb{N}$). Clearly, $\mathbb{C}^{\infty}$ can be identified with $(\mathbb{R}^2)^{\infty}=\prod\limits_{i=1}^{\infty}\mathbb{R}^2$. Set
$$
\ell^{2}\triangleq\left\{( z_{i})^{\infty }_{i=1} \in \mathbb{C}^{\infty } :\; \sum\limits^{\infty }_{i=1} \left| z_{i}\right|^{2}  <\infty \right\}.
$$
Then, $\ell^2$ is a separable Hilbert space with the canonic inner product $\left( \cdot,\cdot\right)$ (and hence  the canonic norm $\left| \left|\;\cdot\;\right|  \right|$). \
For any $\textbf{a}\in \ell^{2}$ and $r\in(0,+\infty)$, write
$$B_{r}(\textbf{a})\triangleq \left\{ \textbf{z}\in \ell^{2} :\;\left| \left| \textbf{z}-\textbf{a}\right|  \right|<r\right\}.
$$
We simply write $B_{r}$ for $B_{r}(0)$.

In what follows, we fix a sequence $\{a_i\}_{i=1}^{\infty}$ of positive numbers with
$\sum\limits_{i=1}^{\infty} a_i <1$.
Let
$$
	\mathcal {N}=\prod_{i=1}^{\infty}\frac{1}{2\pi a_{i}^2}e^{-\frac{x_{i}^2+y_{i}^2 }{2a_i^2}}\,\mathrm{d}x_{i}\mathrm{d}y_{i}
$$
be the product measure on $(\mathbb{R}^2)^{\infty}=\mathbb{C}^{\infty}$  (here and henceforth $x_i,y_i\in \mathbb{R}$ for each $i\in \mathbb{N}$). Noting that $\ell^2$ is a Borel subset of $\mathbb{C}^{\infty}$, $\mathcal {N}(\ell^2 )=1$ and	
$\mathscr{B}(\ell^2)=\{F\cap\ell^2:\; F\in \mathscr{B}(\mathbb{C}^{\infty})\}$, we obtain a Borel probability measure $P$ on $\ell^2$ by setting
\begin{equation}\label{230604e0}
P(E)\triangleq \bn(F),\quad\forall\;E\in \mathscr{B}(\ell^2),
\end{equation}
where $F$ is any subset in $\mathscr{B}(\mathbb{C}^{\infty})$ so that $E=F\cap\ell^2$. More generally, for each $n\in\mathbb{N}$, we define a probability measure $\bn^n$ in $(\mathbb{C}^n,\mathscr{B}(\mathbb{C}^n))$ by setting
$$
\bn^n\triangleq \prod_{i=1}^{n}\frac{1}{2\pi a_{i}^2}e^{-\frac{x_{i}^2+y_{i}^2 }{2a_i^2}}\,\mathrm{d}x_{i}\mathrm{d}y_{i}
$$
and a probability measure ${\widehat{ \bn}^n}$ in $(\mathbb{C}^{\infty},\mathscr{B}(\mathbb{C}^\infty))$ by setting
$$
\widehat{\bn}^n\triangleq \prod_{i=n+1}^{\infty}\frac{1}{2\pi a_{i}^2}e^{-\frac{x_{i}^2+y_{i}^2 }{2a_i^2}}\,\mathrm{d}x_{i}\mathrm{d}y_{i}.
$$
Then, we obtain a Borel probability measure $P_n$ on $\ell^2$ by setting
\begin{equation}\label{230604e1}
P_n(E)\triangleq \widehat{\bn}^n(F),\quad\forall\;E\in \mathscr{B}(\ell^2),
\end{equation}
where $F$ is any subset in $\mathscr{B}(\mathbb{C}^{\infty})$ for which $E=F\cap\ell^2$.
Obviously,
$$P=\bn^n\times P_n,\quad\forall\; n\in \mathbb{N}.$$

For $n\in \mathbb{N}$ and $\textbf{z}=(z_{i} )_{i=1}^\infty\in \ell^2$, set
\begin{equation}\label{231128e1}
\textbf{z}_n=(z_{i})_{i=1}^{n},\q \textbf{z}^n=(z_{i} )_{i=n+1}^\infty.
 \end{equation}
By \cite[Proposition 2.4]{WYZ},  we have
\begin{proposition}\label{jwcz236112}
	Suppose that $f\in L^{2}\left( \ell^{2} ,P\right)  $ .  For each $n\in \mathbb{N}$, set
	{\rm	$$
f_n(\cdot)\triangleq \int f(\cdot,\textbf{z}^n)\,\mathrm{d}P_n(\textbf{z}^n).
 $$}
Then
$\lim\limits_{n\rightarrow \infty } \left| \left| f_{n}-f\right|  \right|_{L^{2}\left( \ell^{2} ,P\right)  }  =0$.
\end{proposition}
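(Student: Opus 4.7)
The plan is to recognize $f_n$ as (essentially) the conditional expectation of $f$ given the $\sigma$-algebra generated by the first $n$ coordinate projections on $\ell^2$, and then to exploit the fact that these $\sigma$-algebras exhaust $\mathscr{B}(\ell^2)$. Concretely, I would prove the conclusion for cylinder functions (on which $T_n f \triangleq f_n$ eventually becomes the identity) and then extend by density, using that $T_n$ is a contraction on $L^2(\ell^2, P)$.

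First I would check that each $T_n$ is a contraction. Using the product decomposition $P = \bn^n \times P_n$ noted just before the proposition and writing $\textbf{z} = (\textbf{z}_n, \textbf{z}^n)$ as in (\ref{231128e1}), Jensen's inequality applied fiberwise to the probability measure $P_n$ yields
$$
|f_n(\textbf{z})|^2 = \left|\int f(\textbf{z}_n, \textbf{z}^n)\,dP_n(\textbf{z}^n)\right|^2 \leq \int |f(\textbf{z}_n, \textbf{z}^n)|^2\,dP_n(\textbf{z}^n),
$$
and integrating against $P$ together with Fubini's theorem gives $\|f_n\|_{L^2(P)} \leq \|f\|_{L^2(P)}$.

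Next I would verify the conclusion for cylinder functions. Call $g \in L^2(\ell^2, P)$ cylindrical of order $m \in \mathbb{N}$ if $g(\textbf{z}) = h(\textbf{z}_m)$ for some Borel $h$ on $\mathbb{C}^m$. Since $P_n$ is a product measure not touching the first $n$ coordinates, for every $n \geq m$ the integrand defining $g_n$ is constant in $\textbf{z}^n$, so $g_n \equiv g$ and $\|g - g_n\|_{L^2(P)} = 0$. Density of cylinder functions in $L^2(\ell^2, P)$ will follow from the fact that $\ell^2$ is a separable metric space whose topology is determined by its coordinate functionals, so $\mathscr{B}(\ell^2)$ is generated by $\bigcup_n \sigma(z_1, \dots, z_n)$; a standard monotone class / simple function argument then produces, for any $\varepsilon > 0$, a cylinder function $g$ with $\|f - g\|_{L^2(P)} < \varepsilon/2$.

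Combining these three ingredients, for $n$ at least as large as the order $m$ of the chosen $g$,
$$
\|f - f_n\|_{L^2(P)} \leq \|f - g\|_{L^2(P)} + \|g - g_n\|_{L^2(P)} + \|g_n - f_n\|_{L^2(P)} \leq 2\|f - g\|_{L^2(P)} < \varepsilon,
$$
using the contraction estimate $\|g_n - f_n\|_{L^2(P)} = \|T_n(g - f)\|_{L^2(P)} \leq \|g - f\|_{L^2(P)}$ for the last $T_n$-term. The only non-routine step is establishing the density of cylinder functions, which is where the separability of $\ell^2$ and the coincidence of its Borel and cylinder $\sigma$-algebras must be used carefully; all other steps reduce to Fubini's theorem and Jensen's inequality for the Gaussian product measure.
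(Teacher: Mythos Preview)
Your argument is correct. The paper does not actually supply a proof of this proposition; it simply invokes \cite[Proposition 2.4]{WYZ}. Your route---identifying $f_n$ with the conditional expectation onto the $\sigma$-algebra generated by the first $n$ coordinates, establishing the $L^2$-contraction bound via Jensen and Fubini, and then running a $3\varepsilon$ density argument through cylinder functions---is the standard martingale-convergence style proof and is complete as outlined. The one point you flag as ``non-routine'' (density of cylinder functions) is in fact already implicit in this paper: the proof of Lemma~\ref{jbysdsh236115} uses exactly the identity $\sigma(M_0)=\mathscr{B}(\ell^2)$ for the $\pi$-class $M_0$ of cylinder sets, so you may cite that step directly rather than re-deriving it.
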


The following result should be known but we do not find an exact reference.
	\begin{lemma}\label{jbysdsh236115}
	If $\left\{ \theta_{i} \right\}^{\infty }_{i=1}  \subset \mathbb{R}$ and $f\in L^{1}\left( \ell^{2} ,P\right)  $, then $f\left( e^{\sqrt{-1}\theta_{1} }z_{1},e^{\sqrt{-1}\theta_{2} }z_{2},\cdots \right)  \in L^{1}\left( \ell^{2} ,P\right)  $, and
 $$
 \int_{\ell^{2} } f\left( z_{1},z_{2},\cdots \right)  dP\left( z_{1},z_{2},\cdots \right)=\int_{\ell^{2} } f\left( e^{\sqrt{-1}\theta_{1} }z_{1},e^{\sqrt{-1}\theta_{2} }z_{2},\cdots \right)  dP\left( z_{1},z_{2},\cdots \right).
 $$
	\end{lemma}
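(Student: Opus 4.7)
The plan is to interpret the substitution $z_i\mapsto e^{\sqrt{-1}\theta_i}z_i$ as the action of a measurable transformation $T:\ell^2\to\ell^2$ and then to verify that $T$ preserves the measure $P$; once this is done, the claimed equality follows from the standard change-of-variables formula, and the integrability of $f\circ T$ follows from that of $f$.

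More concretely, I would first define
$$
T(\mathbf{z})\triangleq \bigl(e^{\sqrt{-1}\theta_1}z_1,e^{\sqrt{-1}\theta_2}z_2,\cdots\bigr),\qquad \mathbf{z}=(z_i)_{i=1}^\infty.
$$
Since $|e^{\sqrt{-1}\theta_i}z_i|=|z_i|$ for every $i\in\mathbb{N}$, the map $T$ is a bijective linear isometry of $\ell^2$ onto itself (with inverse given by the analogous rotations with angles $-\theta_i$); in particular $T$ is Borel measurable. Consequently, $f\circ T$ is Borel measurable on $\ell^2$, and the integrability assertion will follow automatically once the measure-preservation of $T$ is established, since $\int_{\ell^2}|f\circ T|\,dP=\int_{\ell^2}|f|\,dP<\infty$.

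The key step is to show $P\circ T^{-1}=P$. For each fixed $n\in\mathbb{N}$, consider the finite-dimensional rotation $T_n:\mathbb{C}^n\to\mathbb{C}^n$ defined by $T_n(\mathbf{z}_n)=(e^{\sqrt{-1}\theta_1}z_1,\dots,e^{\sqrt{-1}\theta_n}z_n)$. Because the density $\prod_{i=1}^n \frac{1}{2\pi a_i^2}e^{-(x_i^2+y_i^2)/(2a_i^2)}$ defining $\bn^n$ depends only on $|z_i|^2=x_i^2+y_i^2$, an elementary substitution in each $\mathbb{R}^2$-factor shows $\bn^n\circ T_n^{-1}=\bn^n$. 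Consequently, for every cylinder set of the form $E=(A_n\times\prod_{i>n}\mathbb{C})\cap\ell^2$ with $A_n\in\mathscr{B}(\mathbb{C}^n)$, one has, using the product decomposition $P=\bn^n\times P_n$ and the fact that $T$ respects this product structure coordinate-wise,
$$
P\bigl(T^{-1}(E)\bigr)=\bn^n\bigl(T_n^{-1}(A_n)\bigr)\,P_n\bigl(\ell^2\bigr)=\bn^n(A_n)=P(E).
$$
The collection of such cylinder sets is a $\pi$-system generating $\mathscr{B}(\ell^2)$, so by the Dynkin $\pi$-$\lambda$ theorem (or equivalently the uniqueness of product measures) the identity $P\circ T^{-1}=P$ extends to all Borel sets.

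Finally, having $T$ measure-preserving, the change-of-variables formula yields
$$
\int_{\ell^2}f(\mathbf{z})\,dP(\mathbf{z})=\int_{\ell^2}f\bigl(T(\mathbf{z})\bigr)\,dP(\mathbf{z}),
$$
which is precisely the desired identity. The only slightly delicate point is the passage from finite-dimensional rotation invariance of the Gaussian densities to the invariance of the full infinite product $P$; once the cylinder-set computation is in place, however, the uniqueness theorem handles this routinely, so I do not expect any real obstacle here.
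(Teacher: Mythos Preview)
Your proposal is correct and follows essentially the same approach as the paper: both verify rotation invariance of $P$ on the $\pi$-system of cylinder sets using the rotational symmetry of each finite-dimensional Gaussian factor $\bn^n$, and then extend to all of $\mathscr{B}(\ell^2)$ via the Dynkin $\pi$--$\lambda$ (monotone class) theorem. The only cosmetic difference is that you package the argument through a measure-preserving isometry $T$ and the change-of-variables formula, whereas the paper works directly with integrals of characteristic functions and then passes to general $f$ by simple-function approximation and monotone convergence.
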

\begin{proof}
	Let
 $$
 M_{0}\triangleq \left\{ \ell^{2} \cap \left( C_{n}\times \mathbb{C} \times \mathbb{C} \times \cdots \right)  :\;C_{n}\in \mathscr{B}\left( \mathbb{C}^{n} \right),n\in \mathbb{N}  \right\}  .
 $$
 For each $n\in \mathbb{N}$, put (Recall (\ref{231128e1}) for $\textbf{z}_n$)
 $$
 \varphi_{n} \left( \textbf{z}_n\right)  \triangleq \frac{1}{\left( 2\pi \right)^{n}  a^{2}_{1}a^{2}_{2}\cdots a^{2}_{n}} e^{-\sum\limits^{n}_{j=1} \frac{\left| z_{j}\right|^{2}  }{2a^{2}_{j}} }.
 $$
 It is clear that $M_{0}$ is a $\pi$-class, and  by \cite[Exercise 1.10, p. 12]{Prato2006}, $\sigma \left( M_{0}\right)  =\mathscr{B}( \mathbb{C}^{\infty } )  \bigcap \ell^{2} =\mathscr{B}( \ell^{2})   $, where $\sigma \left( M_{0}\right) $ denotes the $\sigma$-field generated by $M_0$.
	
For any given $A\in M_{0}$ and $f(\cdot)=\chi_{A} \left( \cdot\right)$, there exists $n_0\in \mathbb{N}$ and $C_{n_0}\in \mathscr{B}\left( \mathbb{C}^{n_0} \right)$ such that $A=\ell^{2} \cap \left( C_{n_0}\times \mathbb{C} \times \mathbb{C} \times \cdots \right)$, and hence
 $$
		\begin{aligned}
			\int_{\ell^{2} } \chi_{A} \left( z_{1},z_{2},\cdots \right)  dP&=\int_{\mathbb{C}^{n_0} } \chi_{C_{n_0}} \left( \textbf{z}_{n_0}\right) \varphi_{n_0} \left( \textbf{z}_{n_0}\right)  dm_{2n_0}\left(\textbf{z}_{n_0}\right)  \\
       &=\int_{\mathbb{C}^{n_0} } \chi_{C_{n_0}} \left( e^{\sqrt{-1}\theta_{1} }z_{1},e^{\sqrt{-1}\theta_{2} }z_{2},\cdots ,e^{\sqrt{-1}\theta_{n_0} }z_{n_0}\right)  \varphi_{n_0} \left( \textbf{z}_{n_0}\right)  dm_{2n_0}\left( \textbf{z}_{n_0}\right)  \\
       &=\int_{\ell^{2} } \chi_{A} \left( e^{\sqrt{-1}\theta_{1} }z_{1},e^{\sqrt{-1}\theta_{2} }z_{2},\cdots \right)  dP,
		\end{aligned}
	$$
	where the notation $dm_{2n_0}$ stands for the  $2n_0$-dimensional Lebesgue measure.

	Let
 $$
 M\triangleq \left\{ A\in \mathscr{B}( \ell^{2})  :\;P\left( A\right)  =\int_{\ell^{2} } \chi_{A} \left( e^{\sqrt{-1}\theta_{1} }z_{1},e^{\sqrt{-1}\theta_{2} }z_{2},\cdots \right)  dP\right\}  .
 $$
 Clearly, $\ell^{2},\emptyset  \in M$.
	If $A_{n}\in M$ satisfying $A_{n}\subset A_{n+1},n=1,2,\cdots $, then we write $A=\bigcup\limits^{\infty }_{n=1} A_{n}$. By Levi's lemma, we have
 $$
 \begin{aligned}
 P\left( A\right)  =\lim_{n\rightarrow \infty } P\left( A_{n}\right) & =\lim_{n\rightarrow \infty } \int_{\ell^{2} } \chi_{A_{n}} \left( e^{\sqrt{-1}\theta_{1} }z_{1},e^{\sqrt{-1}\theta_{2} }z_{2},\cdots \right)  dP\\
&=\int_{\ell^{2} } \chi_{A} \left( e^{\sqrt{-1}\theta_{1} }z_{1},e^{\sqrt{-1}\theta_{2} }z_{2},\cdots \right)  dP,
 	\end{aligned}
 $$
 which means that  $A\in M$.	
	For any $A,B\in M$ satisfying $B\subset A $, one has
 $$
		\begin{aligned}
			P\left( A\setminus B\right) & =P\left( A\right)  -P\left( B\right)\\
&=\int_{\ell^{2} } \left[\chi_{A} \left( e^{\sqrt{-1}\theta_{1} }z_{1},e^{\sqrt{-1}\theta_{2} }z_{2},\cdots \right)  -\chi_{B} \left( e^{\sqrt{-1}\theta_{1} }z_{1},e^{\sqrt{-1}\theta_{2} }z_{2},\cdots \right)  \right]dP\\
&=\int_{\ell^{2} } \left[\chi_{A\setminus B} \left( e^{\sqrt{-1}\theta_{1} }z_{1},e^{\sqrt{-1}\theta_{2} }z_{2},\cdots \right) \right]dP,
		\end{aligned}
	$$
	hence $A\setminus B\in M$, and $M$ is a $\lambda$-class. By $M_{0}\subset M$ and the Monotone Class Theorem, it follows that $M\supset \sigma \left( M_{0}\right)  =\mathscr{B}( \ell^{2})   $, and hence $M=\mathscr{B}( \ell^{2})  $.

For any $f\in L^1(\ell^2,P)$, without loss of generality we assume that $f\geqslant0$. By \cite[Theorem 1.17, p. 15]{Rud87}, there exists a sequence $\{f_{n}\}_{n=1}^\infty$ of simple measurable functions so that $0\leqslant f_{n}\leqslant f_{n+1}$ for $n\in \mathbb{N}$, and
 $$
 \lim_{n\rightarrow \infty } f_{n}\left( \textbf{z}\right)  =f\left( \textbf{z}\right)  ,\quad\forall\; \textbf{z}\in \ell^{2} .
 $$
By Lebesgue's Monotone Convergence Theorem (e.g., \cite[Theorem 1.26, p. 21]{Rud87}), we see that
 $$
 \begin{aligned}
	\int_{\ell^{2} } f\left( z_{1},z_{2},\cdots \right)  dP&=\lim_{n\rightarrow \infty } \int_{\ell^{2} } f_{n}\left( z_{1},z_{2},\cdots \right)  dP
=\lim_{n\rightarrow \infty } \int_{\ell^{2} } f_{n}\left( e^{\sqrt{-1}\theta_{1} }z_{1},e^{\sqrt{-1}\theta_{2} }z_{2},\cdots \right)  dP\\
&=\int_{\ell^{2} } f\left( e^{\sqrt{-1}\theta_{1} }z_{1},e^{\sqrt{-1}\theta_{2} }z_{2},\cdots \right)  dP,
\end{aligned}
$$
which completes the proof of Lemma \ref{jbysdsh236115}.
\end{proof}

For a nonempty bounded subset $S$ of $V$, write
$$
d\left( S,\partial V\right)  \triangleq \begin{cases}\inf\limits_{\textbf{z}_1\in S,\textbf{z}_2 \in \partial V} \left| \left| \textbf{z}_1-\textbf{z}_2\right|  \right|  ,&V\neq \ell^{2}, \\ +\infty ,&V=\ell^{2} . \end{cases}
$$
Particularly, for $\textbf{z}\in V$, let
\begin{equation}\label{zx230614e1}
d\left( \textbf{z},\partial V\right)\triangleq d\left( \{\textbf{z}\},\partial V\right).
\end{equation}
In view of \cite[Definition 2.1 and Lemma 2.3]{WYZ},  $S$ is said to be uniformly included in $V$, denoted by $S\stackrel{\circ}{\subset} V$, if $d\left( S,\partial V\right)>0$.

Let us recall a notion of differentiability and some spaces of functions introduced in \cite{WYZ, YZ}. Suppose that $f$ is a complex-valued function defined on $V$. As in \cite[p. 528]{YZ}, for each $\textbf{z}=(z_j)_{j=1}^{\infty}=(x_j+\sqrt{-1}y_j)_{j=1}^{\infty}\in V$, we define the partial derivatives of $f(\cdot)$ (at $\textbf{z}$ with respect to $x_j$ and $y_j$) as follows:
$$
\begin{array}{ll}
\displaystyle\frac{\partial f(\textbf{z})}{\partial x_j}\triangleq\lim_{\mathbb{R}\ni \tau\to 0}\frac{f(z_1,\cdots,z_{j-1},z_j+\tau,z_{j+1},\cdots)-f(\textbf{z})}{\tau},\\[3mm]
\displaystyle\frac{\partial f(\textbf{z})}{\partial y_j}\triangleq\lim_{\mathbb{R}\ni \tau\to 0}\frac{f(z_1,\cdots,z_{j-1},z_j+\sqrt{-1}\tau,z_{j+1},\cdots)-f(\textbf{z})}{\tau}.
\end{array}
$$
For each $k\in\mathbb{N}_0$, we denote by $C^{k} ( V)$ the set of all complex-valued functions $f$ on $V$ for which $f$ itself and all of its  partial derivatives up to order $k$ are continuous on $V$ (As usual we simply write $C^0(V)$ as $C(V)$). Write
$$
C^{\infty} ( V)\triangleq \bigcap_{j=1}^{\infty}C^{j} ( V).
$$
For $f\in C(V)$, we call $\supp f\triangleq\overline{\{\textbf{z}\in V:\;f(\textbf{z})\neq 0\}}$ the support of $f$.
For $j\in\mathbb{N}$, write
$$
\partial_{j} \triangleq\frac{1}{2}\left( \frac{\partial }{\partial x_{j}}-\sqrt{-1}\frac{\partial }{\partial y_{j}}\right),\quad
\overline{\partial }_{j} \triangleq\frac{1}{2}\left( \frac{\partial }{\partial x_{j}}+\sqrt{-1}\frac{\partial }{\partial y_{j}}\right).
$$
Denote by $\mathbb{N}_0^{(\mathbb{N})}$ the set of all finitely supported sequences of nonnegative integers, i.e., for each $\alpha =\{\alpha_{j} \}_{j=1}^\infty\in \mathbb{N}^{\left( \mathbb{N} \right)  }_{0} $, one has $\alpha_{m}\in\mathbb{N}_{0}$ for any $ m\in\mathbb{N}$, and there exists $n\in\mathbb{N}$ such that $\alpha_j=0$ for all $j\geqslant n$. Set $\left| \alpha \right|  \triangleq \sum\limits^{\infty }_{j=1} \alpha_{j}$ and write
$$
\begin{array}{ll}
\displaystyle
\partial_{\alpha } \triangleq \partial^{\alpha_{1} }_{1} \partial^{\alpha_{2} }_{2} \cdots \partial^{\alpha_{n} }_{n}  ,\q  \overline{\partial}_{\alpha }  \triangleq \overline{\partial}_{1} ^{\alpha_{1}}  \overline{\partial}_{2} ^{\alpha _{2} } \cdots \overline{\partial}_{n} ^{\alpha _{n} }.
\end{array}
$$
As in \cite{WYZ}, we put
$$
C^{1}_{F}\left( V\right)\triangleq \left\{f\in C^{1}\left( V\right):\;\sup_{S} \left\{ \left| f\right|  +\sum^{\infty }_{j=1} \bigg(\left| \partial_{j}  f\right|^{2}  +\left|\overline{\partial }_{j}  f\right|^{2}\bigg)  \right\} <\infty,\; \forall\; S\stackrel{\circ}{\subset}V\right\}.
$$
For $k\in\mathbb{N}\cup \{\infty\}$, we write $C^{k}_{F}\left( V\right)\triangleq C^{k}\left( V\right)\cap C^{1}_{F}\left( V\right)$ and
$$
\begin{array}{ll}
\displaystyle C^{k}_{F^{k}}\left( V\right)\triangleq \left\{f\in C^{k}(V):\;\partial_{a} \overline{\partial_{\beta } } f\in C^{1}_{F}\left( V\right),\q\forall\;\alpha, \beta\in \mathbb{N}^{\left( \mathbb{N} \right)  }_{0}\text{ with }|\alpha|+|\beta|<k\right\},\\[3mm]
\displaystyle C^{k}_{0}\left( V\right)\triangleq \Big\{f\in C^{k}\left( V\right):\;\supp  f\stackrel{\circ}{\subset}  V\hbox{ and }\sup_{V} \Big| \partial_{\alpha } \overline{\partial}_{\beta } f\Big|  <\infty,\q\forall\;\alpha, \beta\in \mathbb{N}^{\left( \mathbb{N} \right)  }_{0}\text{ with }|\alpha|+|\beta|<k\Big\},\\[3mm]
\displaystyle C^{k}_{0,F}\left( V\right)\triangleq \Big\{f\in C^{k}_{F}\left( V\right):\;\supp  f\stackrel{\circ}{\subset}  V\Big\},\quad C^{k}_{0,F^{k}}\left( V\right)\triangleq C^{k}_{0}\left( V\right)\cap C^{k}_{F^{k}}\left( V\right).
\end{array}
$$

To end this section, motivated by its counterpart in finite dimensions (e.g., \cite[Theorem 2.6.2, p. 44]{Hor90}), we have the following simple result.
\begin{lemma}\label{ghqd236130}
	For any real-valued function $\varphi \in C^{2}\left( V \right)  $, the following conditions are equivalent:
\begin{itemize}
		\item[$(a)$] The function $\varphi$ is plurisubharmonic on $V$;
		\item[$(b)$] For  each $ \textbf{z}\in V$ and $\zeta\in\ell^{2}$,
the function $\varphi \left(  \textbf{z}+\cdot \zeta\right)
$
is subharmonic on $\left\{ \lambda \in \mathbb{C} : \textbf{z}+\lambda \zeta\in V\right\}  $;
		\item[$(c)$] For  each $\textbf{z}\in V$, $n\in \mathbb{N}$ and $\zeta\in\mathbb{C}^{n}$, 	
the function $ \varphi \left(  \textbf{z}+\cd\zeta\right)  $ is subharmonic on $\big\{ \lambda \in \mathbb{C} : \textbf{z}+\lambda \zeta\in$ $ V\big\}  $;
		\item[$(d)$]  $\sum^{n}_{i,j=1} \overline{\partial_{j} } \partial_{i} \varphi \left(  \textbf{z}\right)\varsigma_{i}\overline{\varsigma_{j}} \geqslant 0$ for  each $ \textbf{z}\in V$, $n\in \mathbb{N}$ and $\varsigma_{1},\cdots, \varsigma_{n}\in \mathbb{C}$. 	\end{itemize}
\end{lemma}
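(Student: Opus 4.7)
The plan is to prove (a)$\Leftrightarrow$(b)$\Rightarrow$(c), then (c)$\Leftrightarrow$(d), and finally close the loop with (c)$\Rightarrow$(a). The backbone is to reduce every statement about $\varphi$ on $V\subset\ell^2$ to a one-complex-variable subharmonicity test, where Proposition \ref{cthgh236129} can be used to convert analytic inequalities into pointwise algebraic conditions on the Levi form. Only the step (c)$\Rightarrow$(a) will require an approximation/limit argument, since (c) is stated only for directions in the standard coordinate subspaces $\mathbb{C}^n\times\{0\}^{\infty}$.

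For (a)$\Leftrightarrow$(b), I would unfold Definition \ref{dchcthhshdy23615}. Setting $\psi(\lambda)\triangleq\varphi(\textbf{z}+\lambda\zeta)$, the subharmonicity inequality for $\psi$ centered at $\lambda_0$ with radius $\mu$ on $\{\lambda_0+e^{\sqrt{-1}\theta}\mu\}\subset\{\lambda:\textbf{z}+\lambda\zeta\in V\}$ is exactly the plurisubharmonic inequality for $\varphi$ at the base point $\textbf{z}+\lambda_0\zeta$ in the direction $\mu\zeta\in\ell^2$; conversely, specializing to $\lambda_0=0$, $\mu=1$ recovers the defining inequality of Definition \ref{dchcthhshdy23615}. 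The implication (b)$\Rightarrow$(c) is immediate upon embedding $(\zeta_1,\dots,\zeta_n)\in\mathbb{C}^n$ as $(\zeta_1,\dots,\zeta_n,0,0,\dots)\in\ell^2$. For (c)$\Leftrightarrow$(d) I would fix $\textbf{z}$ and $\zeta\in\mathbb{C}^n$, apply the Wirtinger-derivative chain rule to $\psi(\lambda)=\varphi(\textbf{z}+\lambda\zeta)$, and compute
$$
\frac{\partial^{2}\psi}{\partial\lambda\partial\overline{\lambda}}(\lambda)=\sum_{i,j=1}^{n}\overline{\partial_{j}}\partial_{i}\varphi(\textbf{z}+\lambda\zeta)\zeta_{i}\overline{\zeta_{j}}.
$$
Proposition \ref{cthgh236129} then says $\psi$ is subharmonic on its open domain of definition iff the right-hand side is nonnegative throughout that domain; evaluating at $\lambda=0$ (and varying $\textbf{z}\in V$, $n\in\mathbb{N}$, and $\zeta_i\in\mathbb{C}$) gives (d), and conversely (d) supplies the pointwise nonnegativity along each affine complex line in a coordinate subspace, giving back (c).

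For (c)$\Rightarrow$(a), I would invoke Proposition \ref{dchcthhshzyxwdxzh23616}(1) to reduce to plurisubharmonicity along arbitrary finite-dimensional subspaces, which in turn reduces via Definition \ref{dchcthhshdy23615} to verifying the mean-value inequality for each $\textbf{a}\in V$ and $\textbf{b}\in\ell^{2}$ with $\{\textbf{a}+e^{\sqrt{-1}\theta}\textbf{b}:\theta\in[0,2\pi]\}\subset V$. The plan is to approximate $\textbf{b}$ by its truncations $\textbf{b}^{(n)}\triangleq(b_1,\dots,b_n,0,0,\dots)$. Since the target circle is compact in $\ell^2$, $V$ is open, and $\textbf{b}^{(n)}\to\textbf{b}$ in $\ell^{2}$, for all sufficiently large $n$ the perturbed circles $\{\textbf{a}+e^{\sqrt{-1}\theta}\textbf{b}^{(n)}\}$ also lie in $V$; (c) then yields the mean-value inequality with $\textbf{b}^{(n)}$ in place of $\textbf{b}$. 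The main obstacle, and really the only nontrivial point, is passing to the limit in these integrals: I would do so by noting that $\bigcup_{n\ge N}\{\textbf{a}+e^{\sqrt{-1}\theta}\textbf{b}^{(n)}\}$ is totally bounded in $\ell^{2}$ (hence relatively compact) and contained in $V$ for $N$ large, so the continuous function $\varphi$ is uniformly continuous on a relatively compact neighborhood of the limiting circle, giving uniform convergence of the integrands and therefore the desired mean-value inequality for $\textbf{b}$.
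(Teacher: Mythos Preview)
Your proof is correct and follows essentially the same route as the paper: the implications $(a)\Rightarrow(b)\Rightarrow(c)$ are direct from the definitions, $(c)\Leftrightarrow(d)$ is the chain-rule computation of the complex Laplacian plus Proposition~\ref{cthgh236129}, and $(c)\Rightarrow(a)$ is done by truncating the direction $\mathbf{b}$ to $\mathbf{b}^{(n)}\in\mathbb{C}^n$ and passing to the limit using continuity of $\varphi$.

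The only differences from the paper are cosmetic. In $(c)\Rightarrow(a)$ the paper verifies the \emph{local} criterion of Proposition~\ref{dchcthhshjbd23617}: it fixes a ball $\overline{B_\rho(\mathbf{z})}\subset V$ and takes $r<\rho/\|\zeta\|$, so that \emph{all} the truncated circles $\{\mathbf{z}+re^{\sqrt{-1}\theta}\zeta^{(n)}\}$ automatically lie in $B_\rho(\mathbf{z})$; you instead verify Definition~\ref{dchcthhshdy23615} directly, using compactness of the target circle to guarantee that the perturbed circles stay in $V$ for large $n$. Both arguments require uniform convergence of the integrals, which you justify via total boundedness of the family of circles (the paper's ball argument gives the same uniformity more cheaply). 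One remark: your invocation of Proposition~\ref{dchcthhshzyxwdxzh23616}(1) at the start of $(c)\Rightarrow(a)$ is superfluous, since you immediately proceed to check the mean-value inequality for arbitrary $\mathbf{b}\in\ell^2$, which is exactly Definition~\ref{dchcthhshdy23615}; but this does no harm.
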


\begin{proof}
	``$(a)\Rightarrow (b)$": By the conclusion $(1)$ of Proposition \ref{dchcthhshzyxwdxzh23616}, it is obvious.
		
\medskip

``$(b)\Rightarrow (c)$": It is obvious.

\medskip

``$(c)\Rightarrow (a)$": For each $\textbf{z}\in V$, there is $\rho>0$ such that $\overline{B_\rho \left(\textbf{z}\right)  } \subset V$.
		For each $\delta>0$ and $\zeta=\left( \zeta_{i}\right)^{\infty }_{i=1} \in \ell^{2}\setminus\{ 0\}$, we fix $r\in \left(0,\min \left\{ \delta ,\rho/\left| \left| \zeta\right|  \right|   \right\} \right) $ and write $\zeta^{(n)}=\left( \zeta_{i}\right)^{n}_{i=1} $. From Definition \ref{dchcthhshdy23615} and $$\left| \left| \textbf{z}+re^{\sqrt{-1}\theta }\zeta^{\left( n\right)  }-\textbf{z}\right|  \right|  =r\left| \left| \zeta^{\left( n\right)  }\right|  \right|  \leqslant r\left| \left| \zeta\right|  \right|  <\rho  ,\q\forall\; n\in \mathbb{N},$$ we have $$\varphi \left( \textbf{z}\right)  \leqslant \frac{1}{2\pi } \int^{2\pi }_{0} \varphi \left( \textbf{z}+re^{\sqrt{-1}\theta }\zeta^{\left( n\right)  }\right)  d\theta, \q\forall\; n\in \mathbb{N}.$$
		For each $\varepsilon>0$, there is $\gamma>0$ such that $B_\gamma\left( \textbf{z}+re^{\sqrt{-1}\theta }\zeta \right)  \subset V$ and
 $$\left| \varphi \left( \textbf{w}\right)  -\varphi \left( \textbf{z}+re^{\sqrt{-1}\theta }\zeta\right)  \right|   <\varepsilon ,\q\forall \;\textbf{w}\in B_\gamma\left( \textbf{z}+re^{\sqrt{-1}\theta }\zeta\right)  .$$
		
		By $\lim\limits_{n\rightarrow \infty } \left| \left| \zeta-\zeta^{\left( n\right)  }\right|  \right|  =0$, there is $N\in \mathbb{N}$ such that
 $$\left| \left| \zeta-\zeta^{\left( n\right)  }\right|  \right|  <\frac{\gamma}{r} ,\q\forall\; n> N.$$
		
		For each $n>N$ and $\theta \in \left[ 0,2\pi \right]  $, we have
 $$
 \left| \left| \left( \textbf{z}+re^{\sqrt{-1}\theta }\zeta\right)  -\left( \textbf{z}+re^{\sqrt{-1}\theta }\zeta^{\left( n\right)  }\right)  \right|  \right|  =r\left| \left| \zeta-\zeta^{\left( n\right)  }\right|  \right|  <\gamma .
 $$
It follows that
		$$
			\begin{aligned}
				&\left| \frac{1}{2\pi } \int^{2\pi }_{0} \varphi \left( \textbf{z}+re^{\sqrt{-1}\theta }\zeta\right)  d\theta -\frac{1}{2\pi } \int^{2\pi }_{0} \varphi \left( \textbf{z}+re^{\sqrt{-1}\theta }\zeta^{\left( n\right)  }\right)  d\theta \right|  \\&	 \leqslant \frac{1}{2\pi } \int^{2\pi }_{0} \left| \varphi \left( \textbf{z}+re^{\sqrt{-1}\theta }\zeta\right)  -\varphi \left( \textbf{z}+re^{\sqrt{-1}\theta }\zeta^{\left( n\right)  }\right)  \right|  d\theta \leqslant \frac{1}{2\pi } \int^{2\pi }_{0} \varepsilon d\theta =\varepsilon  .
			\end{aligned}
		$$
		Hence
 $$\lim_{n\rightarrow \infty } \int^{2\pi }_{0} \varphi \left( \textbf{z}+re^{\sqrt{-1}\theta }\zeta^{\left( n\right)  }\right)  d\theta =\int^{2\pi }_{0} \varphi \left( \textbf{z}+re^{\sqrt{-1}\theta }\zeta\right)  d\theta ,$$ and therefore $\varphi \left( \textbf{z}\right)  \leqslant \frac{1}{2\pi } \int^{2\pi }_{0} \varphi \left( \textbf{z}+re^{\sqrt{-1}\theta }\zeta \right)  d\theta.$
		By Proposition \ref{dchcthhshjbd23617}, we see that $\varphi $ is plurisubharmonic on $V$.
		
\medskip

``$(c)\Leftrightarrow  (d)$": This equivalence follows from a direct calculation and Proposition \ref{cthgh236129}.
	This completes the proof of Lemma \ref{ghqd236130}.
\end{proof}

\section{Exhaustion functions and Lipschitz functions}\label{sec3}

As in \cite[Definition 2.2]{WYZ}, any real-valued function $\eta$ on $V$ is called an exhaustion function on $V$, if
 \begin{equation}\label{gx2401132}
V_{t}\triangleq \{\textbf{z}\in V:\;\eta (\textbf{z})  \leqslant t\} \stackrel{\circ}{\subset} V,\q \forall\;t\in \mathbb{R}.
 \end{equation}
If $\eta$ is a continuous exhaustion function on $V$, then the interior of $V_{t}$ is given by
 \begin{equation}\label{gx2401133}
 V^{o}_{t}\triangleq \big\{\textbf{z}\in V:\; \eta \left( \textbf{z}\right) < t\big\}.
\end{equation}
In the case that we hope to emphasize the exhaustion function on $V$ under consideration, we re-write the above sets $V_{t}$ and $V^{o}_{t}$ respectively as $V_{\eta,t}$ and $V^{o}_{\eta,t}$.

From the proof of \cite[Proposition 2.6]{WYZ}, we have the following result.
\begin{proposition}\label{jhbh23619}
	Suppose $\eta \in C^1_{F}\left( V\right)   $ is  an exhaustion function on $V$. Then the following holds:
	
$(1)$ For each $t\in \mathbb{R}$, there exists $C\left( t\right)  \in \left( 0,+\infty \right)  $ such that
		\begin{equation}\label{230608x1}
\left| \eta \left(\textbf{z}\right)  -\eta \left( \textbf{w}\right)  \right|  \leqslant C\left( t\right)  \left| \left|\textbf{z}-\textbf{w}\right|  \right|,\quad\forall\; \textbf{z},\textbf{w}\in V_{t}^{o};
       \end{equation}

$(2)$ For any $s,t\in \mathbb{R}$ with $s<t$, it holds that $V_{s}\stackrel{\circ}{\subset} V^{o}_{t}$;

$(3)$ For any $S\stackrel{\circ}{\subset} V$, there exists $t\in \mathbb{R}$ such that $S\stackrel{\circ}{\subset} V^{o}_{t}$.
\end{proposition}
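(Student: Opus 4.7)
The plan is to dispatch the three items in the stated order, with item $(1)$ bearing all the technical weight and items $(2)$ and $(3)$ following by short deductions from it.

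For item $(1)$, I would begin by observing that since $\eta$ is an exhaustion function, (\ref{gx2401132}) yields $\delta_t \triangleq d(V_t,\partial V) > 0$. I would then enlarge $V_t^o$ to the open $(\delta_t/2)$-tube $N_t$ around $V_t$ in $\ell^{2}$, which itself satisfies $N_t \stackrel{\circ}{\subset} V$. The hypothesis $\eta \in C^1_F(V)$ therefore supplies a finite constant $M_t$ bounding $|\eta|$ and $\sum_j\bigl(|\partial_j \eta|^2 + |\overline{\partial}_j \eta|^2\bigr)$ uniformly on $N_t$. For $\textbf{z}, \textbf{w} \in V_t^o$ with $\|\textbf{z} - \textbf{w}\| < \delta_t/2$, the full segment $\gamma(\tau) = \textbf{z} + \tau(\textbf{w}-\textbf{z})$, $\tau \in [0,1]$, stays inside $N_t$; I would then integrate the real partial derivatives $\partial\eta/\partial x_j$ and $\partial\eta/\partial y_j$ along $\gamma$, and Cauchy--Schwarz applied against the $\ell^{2}$-vector of increments bounds the result by $\sqrt{2M_t}\,\|\textbf{w}-\textbf{z}\|$. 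For $\|\textbf{z} - \textbf{w}\| \geqslant \delta_t/2$, the crude estimate $|\eta(\textbf{z}) - \eta(\textbf{w})| \leqslant 2M_t \leqslant (4M_t/\delta_t)\,\|\textbf{z}-\textbf{w}\|$ suffices. Taking $C(t) = \max\bigl\{\sqrt{2M_t},\, 4M_t/\delta_t\bigr\}$ finishes $(1)$.

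Given $(1)$, items $(2)$ and $(3)$ follow quickly. For $(2)$, fix $s < t$ and set $C = C(t)$. For $\textbf{z} \in V_s$ and $\textbf{w} \in \partial V_t^o$ (the boundary taken in $\ell^{2}$), I would split on whether $\textbf{w} \in V$ or $\textbf{w} \in \partial V$: in the former case, continuity of $\eta$ forces $\eta(\textbf{w}) = t$, and approximating $\textbf{w}$ from inside $V_t^o$ and invoking $(1)$ gives $t - s \leqslant C\,\|\textbf{z}-\textbf{w}\|$; in the latter, $\|\textbf{z}-\textbf{w}\| \geqslant d(V_s,\partial V) > 0$ since $V_s \stackrel{\circ}{\subset} V$. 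Either way, $\|\textbf{z}-\textbf{w}\|$ is bounded below by a positive constant independent of the pair, so $V_s \stackrel{\circ}{\subset} V_t^o$. For $(3)$, given $S \stackrel{\circ}{\subset} V$, the defining property of $C^1_F(V)$ gives $M \triangleq \sup_S|\eta| < \infty$; choosing any $t > M$ places $S$ inside $V_M$, and applying $(2)$ with $s = M$ delivers $V_M \stackrel{\circ}{\subset} V_t^o$, whence $S \stackrel{\circ}{\subset} V_t^o$.

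The main obstacle lies inside item $(1)$, where two infinite-dimensional subtleties must be resolved. First, $V_t^o$ is not assumed convex, so the line segment between two of its points can in principle leave $V_t^o$; the fix is to work in the slightly fatter tube $N_t$, which still lies uniformly inside $V$ and therefore still inherits the uniform $C^1_F$ gradient bound. Second, the chain rule along $\gamma$ produces an infinite series of partial derivatives whose absolute convergence is not automatic, but is precisely controlled by Cauchy--Schwarz using $\sum_j(|\partial_j\eta|^2 + |\overline{\partial}_j\eta|^2) \leqslant M_t$ together with $\textbf{w} - \textbf{z} \in \ell^{2}$; this is exactly what the definition of $C^1_F(V)$ is designed to support.
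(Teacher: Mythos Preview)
Your overall strategy is sound, and items $(2)$ and $(3)$ follow cleanly from $(1)$ as you describe. The paper itself gives no argument here, merely citing \cite[Proposition~2.6]{WYZ}; your self-contained route via the tube $N_{t}$ is the natural one.

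There is, however, one technical point in item $(1)$ that you underestimate. You say the obstacle in applying the chain rule along $\gamma(\tau)=\textbf{z}+\tau(\textbf{w}-\textbf{z})$ is the \emph{absolute convergence} of the resulting infinite series of partial derivatives, and that Cauchy--Schwarz handles this. But the hypothesis $\eta\in C^{1}(V)$ in this paper means only that each individual partial derivative $\partial\eta/\partial x_{j}$, $\partial\eta/\partial y_{j}$ exists and is continuous; it does \emph{not} assert G\^ateaux (let alone Fr\'echet) differentiability. So it is not clear a priori that $\tau\mapsto\eta(\gamma(\tau))$ is differentiable at all, and hence not clear that there is any chain-rule identity to write down --- the issue is existence of the directional derivative, not convergence of its formal series expression.

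The fix is short and stays within your framework: truncate the direction. For each $n$, set $\gamma_{n}(\tau)=\textbf{z}+\tau\big((w_{1}-z_{1}),\ldots,(w_{n}-z_{n}),0,0,\ldots\big)$; since $\|\gamma_{n}(\tau)-\textbf{z}\|\leqslant\|\textbf{w}-\textbf{z}\|<\delta_{t}/2$, each path $\gamma_{n}$ lies in $N_{t}$. Now $\eta\circ\gamma_{n}$ varies only through finitely many coordinates, so the \emph{finite-dimensional} chain rule applies legitimately, and Cauchy--Schwarz against the truncated increment gives $|\eta(\gamma_{n}(1))-\eta(\textbf{z})|\leqslant\sqrt{2M_{t}}\,\|\textbf{w}-\textbf{z}\|$ uniformly in $n$. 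Since $\gamma_{n}(1)\to\textbf{w}$ in $\ell^{2}$ and $\eta$ is continuous, letting $n\to\infty$ yields $(1)$. This finite-truncation mechanism is exactly what the paper invokes elsewhere (see the proofs of Propositions~\ref{llxhfxgx236116} and~\ref{llx236126}) and is presumably how \cite{WYZ} proceeds.
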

\begin{remark}\label{jhbh236110}
	From the proof of \cite[Proposition 2.6]{WYZ}, one can further deduce that, if $\eta$ is a continuous exhaustion function on $V$ and for each $t\in \mathbb{R}$ there exists $C\left( t\right)  \in \left( 0,+\infty \right)  $  such that
	(\ref{230608x1}) holds,
then the conclusions (2) and (3) in Proposition \ref{jhbh23619} still hold true.
\end{remark}

Clearly, (\ref{230608x1}) is a sort of Lipschitz property for the function $\eta$. Generally, denote by $\hbox{\rm Lip$\,$}\left( V\right)$ the set of all complex-valued functions $f$ on $V$ so that for each $S\stackrel{\circ}{\subset} V$, there exists a constant $C(S)>0$ such that
 \begin{equation}\label{230608e1}
\left| f\left( \textbf{z}\right)  -f\left( \textbf{w}\right)  \right|  \leqslant C\left(S\right)  \cdot \left| \left|\textbf{z}-\textbf{w} \right|  \right|  ,\quad\forall\; \textbf{z},\textbf{w}\in S.
 \end{equation}
Any element $f$ in $\hbox{\rm Lip$\,$}\left( V\right)$ is called a Lipschitz function on $V$; as usual, if the the above constant does not depend on $S$, then $f$ is  a globally Lipschitz function on $V$. Clearly, $\hbox{\rm Lip$\,$}\left( V\right)\subset C(V)$.

The following simple result guarantees the existence of Lipschitz exhaustion functions on each $V$.

\begin{lemma}\label{gx230614t1}
There exists a Lipschitz exhaustion function $\eta$ on $V$ with $\inf\limits_{V} \eta >-\infty $ and $\sup\limits_{S} \left| \eta \right|  <\infty $ for each $S\stackrel{\circ}{\subset} V$.
\end{lemma}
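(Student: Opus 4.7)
The plan is to exhibit an explicit $\eta$ built from the two natural Lipschitz quantities attached to $V$: the norm $\|\cdot\|$ (which forces boundedness of sublevel sets) and the reciprocal of the distance to $\partial V$ (which forces sublevel sets to stay uniformly inside $V$). Concretely, I would set
\begin{equation*}
\eta(\textbf{z}) \;\triangleq\; \|\textbf{z}\| \;+\; \frac{1}{d(\textbf{z},\partial V)}, \qquad \textbf{z}\in V,
\end{equation*}
with the convention $1/d(\textbf{z},\partial V)=0$ when $V=\ell^2$. Thus $\eta\geqslant 0$ on $V$, so $\inf_V\eta\geqslant 0>-\infty$ is immediate.

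First I would verify the Lipschitz property. The norm is globally $1$-Lipschitz on $\ell^2$. For any $\textbf{z},\textbf{w}\in V$ and $\textbf{p}\in\partial V$, $\|\textbf{z}-\textbf{p}\|\leqslant\|\textbf{z}-\textbf{w}\|+\|\textbf{w}-\textbf{p}\|$; taking the infimum in $\textbf{p}$ yields $|d(\textbf{z},\partial V)-d(\textbf{w},\partial V)|\leqslant\|\textbf{z}-\textbf{w}\|$, i.e.\ $d(\cdot,\partial V)$ is globally $1$-Lipschitz on $V$. Then for any $S\stackrel{\circ}{\subset}V$ one has $d(\textbf{z},\partial V)\geqslant d(S,\partial V)>0$ for $\textbf{z}\in S$, hence
\begin{equation*}
\left|\frac{1}{d(\textbf{z},\partial V)}-\frac{1}{d(\textbf{w},\partial V)}\right| \;\leqslant\; \frac{\|\textbf{z}-\textbf{w}\|}{d(S,\partial V)^{2}}, \qquad \textbf{z},\textbf{w}\in S,
\end{equation*}
so \eqref{230608e1} holds with $C(S)=1+d(S,\partial V)^{-2}$. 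In particular $\eta\in\hbox{\rm Lip$\,$}(V)\subset C(V)$.

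Next I would check the boundedness condition $\sup_S|\eta|<\infty$ for each $S\stackrel{\circ}{\subset} V$: since $S$ is bounded and $d(S,\partial V)>0$, both summands are bounded on $S$. Finally, for the exhaustion property \eqref{gx2401132}, fix $t\in\mathbb{R}$. If $t\leqslant 0$ then $V_t=\emptyset$ (because $\eta\geqslant 0$, with equality only when both $\textbf{z}=0$ and $d(\textbf{z},\partial V)=+\infty$, i.e.\ never if $V\neq\ell^2$; if $V=\ell^2$ then $V_0=\{0\}$). For $t>0$, any $\textbf{z}\in V_t$ satisfies $\|\textbf{z}\|\leqslant t$ and $d(\textbf{z},\partial V)\geqslant 1/t$, so $V_t$ is bounded with $d(V_t,\partial V)\geqslant 1/t>0$ when $V\neq\ell^2$, and trivially $d(V_t,\partial V)=+\infty$ when $V=\ell^2$. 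In either case $V_t\stackrel{\circ}{\subset} V$, completing the proof.

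There is no serious obstacle here; the only point requiring a brief check is the $1$-Lipschitz property of $d(\cdot,\partial V)$ (needed to pass Lipschitz regularity through the reciprocal on sets uniformly included in $V$), and the handling of the convention $d(\textbf{z},\partial V)=+\infty$ in the case $V=\ell^2$. Both are routine given the definition \eqref{zx230614e1}.
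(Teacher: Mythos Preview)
Your proof is correct and follows essentially the same strategy as the paper: build $\eta$ as the sum of a norm term (ensuring boundedness of sublevel sets) and a term built from $d(\cdot,\partial V)$ (ensuring uniform inclusion), then verify Lipschitz via the $1$-Lipschitz property of $d(\cdot,\partial V)$. The only difference is cosmetic: the paper takes $\eta(\textbf{z})=\|\textbf{z}\|^2-\ln d(\textbf{z},\partial V)$ rather than your $\|\textbf{z}\|+1/d(\textbf{z},\partial V)$, a choice motivated by later use (since $-\ln d(\cdot,\partial V)$ is plurisubharmonic on pseudo-convex $V$), but for this lemma alone either works.
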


\begin{proof}
First of all, since $V$ is a nonempty open set of $\ell^{2}$, we claim that $\partial V=\emptyset$ if and only if $V=\ell^{2}$. Indeed, the ``if" part is obvious. To show the ``only if" part, by $\partial V=\emptyset$, we see that $\ell^{2}=V\cup \partial V\cup (\ell^{2}\setminus V)^o=V\cup (\ell^{2}\setminus V)^o$. By Proposition \ref{collected}, it follows that $\ell^2$ is a connected space, and hence  $V=\ell^{2}$.

Next, recall (\ref{zx230614e1}) for $ d\left( \cdot,\partial V\right)$, we put
$$\eta \left( \textbf{z}\right)  \triangleq \begin{cases}-\ln d\left( \textbf{z},\partial V\right)  +\left| \left| \textbf{z}\right|  \right|^{2}  ,&V\neq \ell^{2}, \\ \left| \left| \textbf{z}\right|  \right|^{2}  ,&V=\ell^{2}. \end{cases}
 $$
Clearly, $\lim\limits_{\textbf{z}\rightarrow \partial V} \big( -\ln d\left( \textbf{z},\partial V\right)  +\left| \left| \textbf{z}\right|  \right|^{2}  \big)  =+\infty  $ for $V\neq \ell^{2}$ and $\lim\limits_{\left| \left| \textbf{z}\right|  \right|\rightarrow \infty} \left| \left| \textbf{z}\right|  \right|^{2}=+\infty $ for $V= \ell^{2}$. By \cite[Lemma 2.4]{WYZ},  we see that $\eta$ is an exhaustion function on $V$. Moreover, for the case that $\partial V\not=\emptyset$, by $\left| d\left( \textbf{z},\partial V\right)  -d\left( \textbf{w},\partial V\right)  \right|  \leqslant \left| \left| \textbf{z}-\textbf{w}\right|  \right|  $ for all $\textbf{z},\textbf{w}\in V$ and the Lagrange mean value theorem, we see that, for each $S\stackrel{\circ}{\subset} V$,
$$\big| -\ln d\left( \textbf{z},\partial V\right)  -\left( -\ln d\left( \textbf{w},\partial V\right)  \right)  \big|  \leqslant \frac{\big| d\left( \textbf{z},\partial V\right)  -d\left( \textbf{w},\partial V\right)  \big|  }{d\left( S,\partial V\right)  } \leqslant \frac{\left| \left| \textbf{z}-\textbf{w}\right|  \right|  }{d\left( S,\partial V\right)  },\quad\forall\;\textbf{z},\textbf{w}\in S.
 $$
Hence $\eta $ is a Lipschitz function. It is easy to see that $\inf\limits_{V} \eta >-\infty $ and $\sup\limits_{S} \left| \eta \right|  <\infty $.
\end{proof}

\begin{proposition}\label{llxhfxgx236116}
It holds that
 $$
 C^{1}\left( V\right)\cap \hbox{\rm Lip$\,$}\left( V\right)\subset  C^{1}_{F}\left( V\right) .
 $$
	\end{proposition}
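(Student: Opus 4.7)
\medskip

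\noindent\textbf{Proof proposal.} Fix $f\in C^{1}(V)\cap \hbox{\rm Lip$\,$}(V)$ and an arbitrary $S\stackrel{\circ}{\subset}V$. Writing $d_{0}\triangleq d(S,\partial V)\in(0,+\infty]$, I would first inflate $S$ to
$$
S'\triangleq \left\{\textbf{w}\in\ell^{2}:\;\inf_{\textbf{z}\in S}\|\textbf{w}-\textbf{z}\|\leqslant r\right\},\qquad r\triangleq \tfrac{1}{2}\min\{d_{0},1\}>0,
$$
and check via the triangle inequality that $d(S',\partial V)\geqslant r>0$ and that $S'$ is bounded, so $S'\stackrel{\circ}{\subset}V$; pick the Lipschitz constant $C\triangleq C(S')>0$ from (\ref{230608e1}). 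Since $S$ is bounded, fixing any $\textbf{z}_{0}\in S$ gives $|f(\textbf{z})|\leqslant |f(\textbf{z}_{0})|+C\cdot\operatorname{diam}(S)<\infty$ for all $\textbf{z}\in S$, handling the $|f|$ contribution to the $C^{1}_{F}(V)$ seminorm.

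The main task is the uniform bound on $\sum_{j=1}^{\infty}(|\partial_{j}f|^{2}+|\overline{\partial}_{j}f|^{2})$ on $S$. Decompose $f=u+\sqrt{-1}v$ into real and imaginary parts; both $u,v\in C^{1}(V)$ are real-valued and inherit the Lipschitz constant $C$ on $S'$. Fix $\textbf{z}\in S$ and $n\in\mathbb{N}$, and in the spirit of the classical ``test in the gradient direction'' trick, choose the finitely supported vector
$$
\textbf{w}\triangleq \sum_{j=1}^{n}\left(\frac{\partial u}{\partial x_{j}}(\textbf{z})+\sqrt{-1}\frac{\partial u}{\partial y_{j}}(\textbf{z})\right)\textbf{e}_{j}\in\ell^{2},
$$
where $\{\textbf{e}_{j}\}$ is the standard basis of $\ell^{2}$. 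For $|t|\leqslant r/(1+\|\textbf{w}\|)$ we have $\textbf{z}+t\textbf{w}\in S'$, so that the finite-dimensional chain rule (applicable because only finitely many coordinates of $\textbf{z}+t\textbf{w}$ vary) yields
$$
\frac{d}{dt}u(\textbf{z}+t\textbf{w})\bigg|_{t=0}=\sum_{j=1}^{n}\left[\left(\frac{\partial u}{\partial x_{j}}(\textbf{z})\right)^{\!2}+\left(\frac{\partial u}{\partial y_{j}}(\textbf{z})\right)^{\!2}\right]=\|\textbf{w}\|^{2},
$$
while the Lipschitz bound $|u(\textbf{z}+t\textbf{w})-u(\textbf{z})|\leqslant C|t|\|\textbf{w}\|$ produces, upon dividing by $|t|$ and letting $t\to 0$, the inequality $\|\textbf{w}\|^{2}\leqslant C\|\textbf{w}\|$. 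Hence $\sum_{j=1}^{n}[(\partial u/\partial x_{j})^{2}+(\partial u/\partial y_{j})^{2}](\textbf{z})\leqslant C^{2}$ uniformly in $n$ and $\textbf{z}\in S$; the same reasoning applied to $v$ gives the analogous bound for $v$.

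Letting $n\to\infty$ and summing the two inequalities, together with the identity
$$
|\partial_{j}f|^{2}+|\overline{\partial}_{j}f|^{2}=\tfrac{1}{2}\!\left(\left|\frac{\partial f}{\partial x_{j}}\right|^{2}+\left|\frac{\partial f}{\partial y_{j}}\right|^{2}\right)=\tfrac{1}{2}\sum_{g\in\{u,v\}}\!\left[\left(\frac{\partial g}{\partial x_{j}}\right)^{\!2}+\left(\frac{\partial g}{\partial y_{j}}\right)^{\!2}\right]
$$
that follows from the definitions of $\partial_{j}$ and $\overline{\partial}_{j}$, we obtain $\sup_{\textbf{z}\in S}\sum_{j=1}^{\infty}(|\partial_{j}f(\textbf{z})|^{2}+|\overline{\partial}_{j}f(\textbf{z})|^{2})\leqslant C^{2}$, completing the verification that $f\in C^{1}_{F}(V)$.

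The only subtle point I anticipate is psychological rather than technical: merely invoking the Lipschitz estimate in each coordinate direction $\textbf{e}_{j}$ would give $|\partial f/\partial x_{j}|,|\partial f/\partial y_{j}|\leqslant C$ individually but is useless for bounding the infinite sum. The crucial trick is to test in the truncated gradient direction, which by Cauchy--Schwarz is saturated exactly when the Lipschitz bound meets the chain-rule identity; this converts a pointwise Lipschitz constant into an $\ell^{2}$-bound on the full sequence of partial derivatives, uniformly over $S$.
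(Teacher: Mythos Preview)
Your proof is correct and follows essentially the same strategy as the paper's: reduce to real-valued components, bound $|f|$ via the Lipschitz estimate, enlarge $S$ to a set still uniformly contained in $V$, and then use the finite-dimensional fact that the Lipschitz constant dominates the $\ell^{2}$-norm of the gradient on each $n$-coordinate slice. The paper obtains the enlargement by invoking the Lipschitz exhaustion function of Lemma \ref{gx230614t1} (via Proposition \ref{jhbh23619} and Remark \ref{jhbh236110}) to pass from $S$ to some $V_{t}^{o}$, and then simply asserts (\ref{yxh236117}) as a consequence of the finite-dimensional Lipschitz bound; your direct $r$-thickening $S'$ is a minor simplification that avoids that machinery, and your explicit ``test in the truncated gradient direction'' spells out the step the paper leaves implicit.
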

\begin{proof}
	For any $f\in C^{1}\left( V\right)\cap \hbox{\rm Lip$\,$}\left( V\right)$, by $f=\Re {f}  +\sqrt{-1} \Im {f}   $,  we may simply assume that $f$ is a real-valued function.
For any fixed $S\stackrel{\circ}{\subset} V$, there exists $C(S)>0$ such that (\ref{230608e1}) holds.
For a fixed $\zeta^{0}\in S$, we have
 $$
 \left| f\left( \textbf{z}\right)  \right|  \leqslant C\left( S\right)  \big| \big| \textbf{z}-\zeta^{0} \big|  \big|  +\big| f\big( \zeta^{0} \big)  \big| \leqslant C\left( S\right)  \sup_{\textbf{w}\in S} \big| \big| \textbf{w}-\zeta^{0} \big|  \big|  +\big| f\big( \zeta^{0} \big)  \big| <\infty  ,\q \forall\; \textbf{z}\in S.
 $$
Let $\eta$ be the Lipschitz exhaustion function on $V$ given in Lemma \ref{gx230614t1}. By Proposition \ref{jhbh23619} and Remark \ref{jhbh236110},	one can find $t\in \mathbb{R}$ such that $S\subset V_{t}^{o}$ (Recall (\ref{gx2401132}) for $V_{t}$). Therefore, it suffices to prove that
\begin{equation}\label{yxh236117}
		\sum^{n}_{i=1} \left( \left| \frac{\partial f\left( \textbf{z}\right)  }{\partial x_{j}}   \right|^{2}  +\left| \frac{\partial f\left( \textbf{z}\right)  }{\partial y_{j}} \right|^{2}  \right)  \leqslant C(V_{t}^{o}),\quad  \forall\; n\geqslant 1,\textbf{z}\in V^{o}_{t}.
	\end{equation}
	
	For any fixed $n\in \mathbb{N}$ and $\textbf{z}=\left( \textbf{z}_n,\textbf{z}^n\right) \in V_{t}^{o}$ (Recall (\ref{231128e1}) for $\textbf{z}_n$ and $\textbf{z}^n$), we choose a sufficiently small open ball  $B_{r}(\textbf{z}) \subset V_{t}^{o}$. By (\ref{230608e1}), we have
 $$\left| f\left(\textbf{z}_n,\textbf{z}^n\right)  -f\left(\textbf{w}_n ,\textbf{z}^n\right)  \right|  \leqslant C(V_{t}^{o})\left| \left| \textbf{z}_n-\textbf{w}_n \right|  \right|_{\mathbb{C}^{n} }  ,\q \forall\;\textbf{w}_n\in \mathbb{C}^{n} \hbox{ with } \left( \textbf{w}_n ,\textbf{z}^n\right)  \in B_{r}(\textbf{z}).
  $$
Hence the inequality \eqref{yxh236117} holds.
	
By \eqref{yxh236117}, and letting $n\rightarrow \infty $, we conclude that $f  \in C^{1 }_{F}\left( V\right)  $. This completes the proof of  Proposition \ref{llxhfxgx236116}.
\end{proof}

The following result, motivated by the classical Friedrichs mollifier
technique and the Gross mollifier technique in \cite[Proposition 6 at p. 133 and Proposition 9 at p. 152]{Gro67} (See also \cite[Theorem 3.3.3, p. 57]{Prato}), will play a crucial role in the sequel.

\begin{theorem}\label{pmgjsh236118}
For any $\vartheta \in C^{\infty }_{0}( \ell^{2})  $, $f\in \hbox{\rm Lip$\,$} (\ell^{2}) $ and $\varepsilon>0$, let
 {\rm $$
 f_{\varepsilon }\left(  \textbf{z}\right)  \triangleq \int_{\ell^{2} } f\left(  \textbf{z}-\varepsilon \zeta \right)  \vartheta \left( \zeta \right)  dP\left( \zeta \right)  ,\quad\forall\; \textbf{z}\in \ell^{2} .
 $$}
Then $f_{\varepsilon }\in C^{\infty }_{F^{\infty }}\left( \ell^{2} \right)$; moreover $f_{\varepsilon }\to f$ everywhere in $\ell^2$ as $\varepsilon\to0^+$, provided that $\int_{\ell^{2} } \vartheta \left( \zeta \right)  dP\left( \zeta \right)=1$.
\end{theorem}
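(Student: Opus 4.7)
The plan is to use the Cameron-Martin translation formula for the Gaussian product measure $P$ on $\ell^{2}$ (equivalently, a finite-dimensional change of variables followed by differentiation of the Gaussian density) to transfer differentiation off the non-smooth factor $f$ and onto the smooth kernel $\vartheta$ and the Gaussian weight. This produces, for every $\alpha,\beta\in\mathbb{N}_{0}^{(\mathbb{N})}$, an integral representation of $\partial_{\alpha}\overline{\partial}_{\beta}f_{\varepsilon}$ whose integrand is the (non-smooth) function $f$ against a smooth bounded kernel; the $C^{\infty}_{F^{\infty}}(\ell^{2})$-property will then be read off from the \emph{Lipschitz character} of each derivative rather than from coordinatewise estimates on the kernel. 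Well-definedness and continuity of $f_{\varepsilon}$ are immediate, since $\vartheta\in C^{\infty}_{0}(\ell^{2})$ forces $\supp\vartheta$ to be bounded (as $S\stackrel{\circ}{\subset}\ell^{2}$ is defined only for bounded $S$), and $f\in\mathrm{Lip}(\ell^{2})$ is bounded on bounded sets.

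For $j\in\mathbb{N}$, $h\in\mathbb{R}$ small, and $e_{j}^{R}$ the unit vector of $\ell^{2}$ whose only nonzero entry is $1$ in the real slot $\alpha_{j}$ of $\zeta_{j}=\alpha_{j}+\sqrt{-1}\beta_{j}$, the substitution $\zeta\mapsto\zeta-(h/\varepsilon)e_{j}^{R}$ inside the defining integral for $f_{\varepsilon}(\textbf{z}+he_{j}^{R})$ together with the Cameron-Martin density
\[
\frac{dP(\zeta+(h/\varepsilon)e_{j}^{R})}{dP(\zeta)}=\exp\!\left(-\frac{h\alpha_{j}}{\varepsilon a_{j}^{2}}-\frac{h^{2}}{2\varepsilon^{2}a_{j}^{2}}\right)
\]
rewrites $f_{\varepsilon}(\textbf{z}+he_{j}^{R})$ as an integral whose $h$-dependence lies entirely on smooth, bounded, boundedly-supported factors. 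Differentiating at $h=0$ under the integral (justified by the dominated convergence theorem) yields
\[
\frac{\partial f_{\varepsilon}}{\partial x_{j}}(\textbf{z})=\frac{1}{\varepsilon}\int f(\textbf{z}-\varepsilon\zeta)\!\left[\frac{\partial\vartheta}{\partial x_{j}}(\zeta)-\frac{\alpha_{j}}{a_{j}^{2}}\vartheta(\zeta)\right]dP(\zeta),
\]
and analogously for $\partial/\partial y_{j}$. The new kernel is again smooth, bounded, and supported in $\supp\vartheta$, so induction on $|\alpha|+|\beta|$ gives, for all multi-indices $\alpha,\beta$,
\[
\partial_{\alpha}\overline{\partial}_{\beta}f_{\varepsilon}(\textbf{z})=\int f(\textbf{z}-\varepsilon\zeta)K_{\alpha,\beta}(\zeta)\,dP(\zeta),
\]
with $K_{\alpha,\beta}$ smooth, bounded, and supported in $\supp\vartheta$; dominated convergence then gives continuity of each derivative in $\textbf{z}$, hence $f_{\varepsilon}\in C^{\infty}(\ell^{2})$.

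To upgrade this to the $C^{\infty}_{F^{\infty}}(\ell^{2})$-property, fix a bounded $S\subset\ell^{2}$, set $\widetilde{S}\triangleq S-\varepsilon\supp\vartheta$ (bounded), and write $g\triangleq\partial_{\alpha}\overline{\partial}_{\beta}f_{\varepsilon}$. The integral representation above and the Lipschitz bound $C(\widetilde{S})$ for $f$ on $\widetilde{S}$ give $|g(\textbf{z})-g(\textbf{w})|\leqslant C(\widetilde{S})\|K_{\alpha,\beta}\|_{L^{1}(P)}\|\textbf{z}-\textbf{w}\|$ for $\textbf{z},\textbf{w}\in S$. Because $g\in C^{\infty}$, the operator norm of its Fréchet derivative at each $\textbf{z}\in S$ is bounded by this Lipschitz constant, and invoking the identity $|\partial_{j}g|^{2}+|\overline{\partial}_{j}g|^{2}=\tfrac{1}{2}(|\partial_{x_{j}}g|^{2}+|\partial_{y_{j}}g|^{2})$ converts this into the pointwise bound
\[
\sum_{j=1}^{\infty}\!\bigl(|\partial_{j}g(\textbf{z})|^{2}+|\overline{\partial}_{j}g(\textbf{z})|^{2}\bigr)\leqslant\tfrac{1}{2}\bigl(C(\widetilde{S})\|K_{\alpha,\beta}\|_{L^{1}(P)}\bigr)^{2},\qquad\textbf{z}\in S.
\]
Combined with $|g(\textbf{z})|\leqslant\|K_{\alpha,\beta}\|_{L^{1}(P)}\sup_{\widetilde{S}}|f|$, this gives $g\in C^{1}_{F}(\ell^{2})$, and hence $f_{\varepsilon}\in C^{\infty}_{F^{\infty}}(\ell^{2})$.

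Finally, when $\int\vartheta\,dP=1$, a direct Lipschitz estimate on a bounded neighbourhood of $\textbf{z}$ yields
\[
|f_{\varepsilon}(\textbf{z})-f(\textbf{z})|\leqslant\int|f(\textbf{z}-\varepsilon\zeta)-f(\textbf{z})||\vartheta(\zeta)|\,dP\leqslant C(\widetilde{S}_{\textbf{z}})\,\varepsilon\,\|\vartheta\|_{\infty}\!\int_{\supp\vartheta}\!\|\zeta\|\,dP,
\]
which tends to $0$ as $\varepsilon\to 0^{+}$, proving convergence everywhere on $\ell^{2}$. The main obstacle is the $F$-summability in the previous paragraph: a naive Cauchy-Schwarz bound applied to the explicit formula for $\partial_{x_{j}}f_{\varepsilon}$ produces a divergent factor $\sum_{j}1/a_{j}^{2}$ (since $\sum a_{j}<1$ forces $a_{j}\to 0$), so the indirect route via global Lipschitz continuity of each $\partial_{\alpha}\overline{\partial}_{\beta}f_{\varepsilon}$ on bounded sets, in place of a coordinatewise estimate of $K_{\alpha,\beta}$, is the crucial new ingredient for the infinite-dimensional setting.
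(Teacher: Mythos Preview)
Your proof is correct and follows essentially the same strategy as the paper: transfer all derivatives onto the kernel via the (finite-dimensional) change of variables in the Gaussian measure, obtain an integral representation $\partial_{\alpha}\overline{\partial}_{\beta}f_{\varepsilon}(\textbf{z})=\int f(\textbf{z}-\varepsilon\zeta)K_{\alpha,\beta}(\zeta)\,dP(\zeta)$ with $K_{\alpha,\beta}\in C^{\infty}_{0}(\ell^{2})$, read off a uniform Lipschitz bound for each such derivative on bounded sets, and conclude $C^{1}_{F}$-membership from that Lipschitz bound rather than from a coordinatewise estimate on $K_{\alpha,\beta}$. Your final remark identifies precisely the obstruction the paper is designed to avoid.

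One step should be tightened. You write that ``the operator norm of its Fr\'echet derivative at each $\textbf{z}\in S$ is bounded by this Lipschitz constant''. In the paper's framework, $g\in C^{\infty}(\ell^{2})$ means only that all coordinate partial derivatives exist and are continuous; Fr\'echet differentiability is not part of the definition and has not been established. The paper closes this gap by the finite-dimensional argument of Proposition~\ref{llxhfxgx236116}: for each fixed $n$, the restriction of $g$ to the first $n$ complex coordinates is $C^{1}$ on $\mathbb{C}^{n}$ and Lipschitz with the same constant $L$, hence $\sum_{j=1}^{n}(|\partial_{x_{j}}g|^{2}+|\partial_{y_{j}}g|^{2})\leqslant L^{2}$; letting $n\to\infty$ gives the $C^{1}_{F}$-bound. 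Replacing your Fr\'echet-derivative sentence with this two-line argument makes the proof complete and aligns it exactly with the paper's route.
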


\begin{proof}
Since $\vartheta\in C^{\infty }_{0}( \ell^{2})  $, there exists $r>0$ so that $\supp\vartheta \subset B_r$. It is clear that $f\in C(\ell^{2})$. For each $s>0$ and $\textbf{z}\in B_s$, by the definition of $\hbox{\rm Lip$\,$} (\ell^{2}) $ (and particularly recalling (\ref{230608e1}) for $C\left(B_{s+\varepsilon r} \right)$), we have
 \begin{equation}\label{shyhsh23609x1}
 \begin{aligned}
  \left| f_{\varepsilon }\left( \textbf{z}\right)  \right|  &\leqslant \int_{B_{r}} \left| f\left( \textbf{z}-\varepsilon \zeta \right)  \vartheta \left( \zeta \right)  \right|  dP\left( \zeta \right) \leqslant \int_{B_{r}} \left( \left| f\left( \textbf{z}\right)  \right|  +\varepsilon C\left(B_{s+\varepsilon r} \right)  \left| \left| \zeta \right|  \right|  \right)  \cdot \left| \vartheta \left( \zeta \right)  \right|  dP\left( \zeta \right)  \\
  &\leqslant \left( \left| f\left( \textbf{z}\right)  \right|  +\varepsilon C\left( B_{s+\varepsilon r} \right)  r\right)  \int_{B_{r}}  \left| \vartheta \left( \zeta \right)  \right|  dP\left( \zeta \right)  \\
  &\leqslant \left( \left| f\left( 0\right)  \right|  +C\left(B_{s} \right) s +\varepsilon C\left( B_{s+\varepsilon r}  \right)  r\right)  \int_{B_{r}}  \left| \vartheta \left( \zeta \right)  \right|   dP\left( \zeta \right) ,
\end{aligned}
 \end{equation}
which implies that
\begin{equation}\label{shyhsh23609e1}
\sup\limits_{\textbf{z}\in S} \left| f_{\varepsilon }\left( \textbf{z}\right)  \right|  <\infty,\quad \forall\;S\stackrel{\circ}{\subset}\ell^{2} .
\end{equation}

Further, for each $\textbf{z}(=(z_{i} )_{i=1}^\infty), \textbf{w}(=(w_{i} )_{i=1}^\infty)\in \ell^{2} $ and $\alpha,\beta \in \n$, we write $k\triangleq\max \{ | \alpha_{j} |  ,| \beta_{j} |  :j\geqslant 1\}  \in \mathbb{N}_{0}$, and define $\textbf{z}_k$, $\textbf{z}^k$, $\textbf{w}_k$ and $\textbf{w}^k$ similar to \ref{231128e1}). Then,
 \begin{equation}\label{shyhsh23609e2}
 \begin{aligned}
	&\partial_{\alpha } \overline{\partial }_{\beta } f_{\varepsilon }\left( \textbf{z}\right) \\
&=\partial_{\alpha } \overline{\partial }_{\beta } \left( \frac{1}{\left( 2\pi \right)^{k}  a^{2}_{1}a^{2}_{2}\cdots a^{2}_{k}} \int \int_{\mathbb{C}^{k} } f\left( \textbf{z}-\varepsilon \textbf{w} \right)  e^{-\sum\limits^{k}_{i=1} \frac{| w_i|^{2}  }{2a^{2}_{i}} }\vartheta \left( \textbf{w} \right)  dm_{2k}\left( \textbf{w}_k \right)  dP_{k}\left( \textbf{w}^k\right)  \right)\\
&=\partial_{\alpha } \overline{\partial }_{\beta } \left( \frac{1}{\left( 2\pi \varepsilon^{2} \right)^{k}  a^{2}_{1}a^{2}_{2}\cdots a^{2}_{k}} \int  \int_{\mathbb{C}^{k} } f\left( \textbf{w}_k ,\textbf{z}^k-\varepsilon\textbf{w}^k\right)  \vartheta \left( \frac{\textbf{z}_k-\textbf{w}_k }{\varepsilon } ,\textbf{w}^k \right)\right.\\&\qq\qq\qq\qq\qq\qq\qq\qq\times\left.  e^{-\sum\limits^{k}_{i=1} \frac{| z_{i}-w_{i} |^{2}  }{2a^{2}_{i}\varepsilon^{2} } }dm_{2k}\left( \textbf{w}_k \right)   dP_{k}\left( \textbf{w}^k\right)  \right)
   \\&=\frac{1}{\left( 2\pi \varepsilon^{2} \right)^{k}  a^{2}_{1}a^{2}_{2}\cdots a^{2}_{k}} \int \int_{\mathbb{C}^{k} } f\left( \textbf{w}_k ,\textbf{z}^k-\varepsilon \textbf{w}^k \right)  \partial_{\alpha } \overline{\partial }_{\beta } \left( \vartheta \left( \frac{\textbf{z}_k-\textbf{w}_k }{\varepsilon } ,\textbf{w}^k\right)  \right.\\&\qq\qq\qq\qq\qq\qq\qq\qq\times\left. e^{-\sum\limits^{k}_{i=1} \frac{| z_{i}-w_{i} |^{2}  }{2a^{2}_{i}\varepsilon^{2} } }\right)  dm_{2k}\left( \textbf{w}_k \right)  dP_{k}\left( \textbf{w}^k \right)
   \\&=\frac{1}{\left( 2\pi \varepsilon^{2} \right)^{k}  a^{2}_{1}a^{2}_{2}\cdots a^{2}_{k}} \int \int_{\mathbb{C}^{k} } f\left( \textbf{w}_k ,\textbf{z}^k-\varepsilon \textbf{w}^k\right)  \vartheta_{\alpha ,\beta ,\varepsilon } \left( \frac{\textbf{z}_k-\textbf{w}_k }{\varepsilon } ,\textbf{w}^k \right) \\&\qq\qq\qq\qq\qq\qq\qq\qq\times e^{-\sum\limits^{k}_{i=1} \frac{| z_{i}-w_{i} |^{2}  }{2a^{2}_{i}\varepsilon^{2} } }dm_{2k}\left( \textbf{w}_k\right)   dP_{k}\left(\textbf{w}^k\right)
    \\&=\frac{1}{\left( 2\pi \right)^{k}  a^{2}_{1}a^{2}_{2}\cdots a^{2}_{k}} \int \int_{\mathbb{C}^{k} } f\left( \textbf{z}-\varepsilon \textbf{w} \right)  \vartheta_{\alpha ,\beta ,\varepsilon } \left( \textbf{w}\right)  e^{-\sum\limits^{k}_{i=1} \frac{| w_{i} |^{2}  }{2a^{2}_{i}} }dm_{2k}\left( \textbf{w}_k\right)  dP_{k}\left( \textbf{w}^k\right)    \\& =\int_{\ell^{2} } f\left( \textbf{z}-\varepsilon\textbf{w} \right)   \vartheta_{\alpha,\beta,\varepsilon }\left(  \textbf{w}\right)  dP\left( \textbf{w} \right)  ,
\end{aligned}
 \end{equation}
where
$$
\vartheta_{\alpha ,\beta ,\varepsilon } \left(  \textbf{w}\right)  \triangleq \frac{1}{\varepsilon^{|\alpha|+|\beta|} }\partial_{\alpha } \overline{\partial }_{\beta } \left( \vartheta \left(\textbf{w}\right)  e^{-\sum\limits^{k}_{i=1} \frac{|w_{i} |^{2}  }{2a^{2}_{i}} }\right) \cdot e^{\sum\limits^{k}_{i=1} \frac{|w_{i}|^{2}  }{2a^{2}_{i}} }.
 $$
Moreover, for any $\rho>0$ and $\textbf{z},\textbf{z}'\in B_\rho$, we have
 \begin{equation}\label{shyhsh23609e3}
 \begin{aligned}
		&|\partial_{\alpha } \overline{\partial }_{\beta }f_{\varepsilon }\left( \textbf{z}\right)  -\partial_{\alpha } \overline{\partial }_{\beta }f_{\varepsilon }\left( \textbf{z}'\right)  |\leqslant \int_{\ell^{2} } |f\left( \textbf{z}-\varepsilon \textbf{w} \right)  -f\left( \textbf{z}'-\varepsilon \textbf{w} \right)  |\cdot |\vartheta_{\alpha ,\beta ,\varepsilon } \left( \textbf{w} \right)  |dP\left( \textbf{w} \right)  \\&\leqslant C\left( B_{\rho +\varepsilon r} \right)  \int_{\ell^{2} } \left| \left| \textbf{z}-\textbf{z}'\right|  \right|  \cdot |\vartheta_{\alpha ,\beta ,\varepsilon } \left(\textbf{w}\right)  |dP\left(\textbf{w}\right)
	=C\left( B_{ \rho +\varepsilon r}\right)  \int_{\ell^{2} } |\vartheta_{\alpha ,\beta ,\varepsilon } |dP\cdot \left| \left| \textbf{z}-\textbf{z}'\right|  \right| .
\end{aligned}
 \end{equation}
By $\vartheta \in C^{\infty }_{0}\left( \ell^{2} \right)  $, we see that
\begin{equation}\label{shyhsh23609e4}
\vartheta_{\alpha ,\beta ,\varepsilon } \in C^{\infty }_{0}\left( \ell^{2} \right),\quad \supp \vartheta_{\alpha ,\beta ,\varepsilon }\subset \supp \vartheta  .
  \end{equation}
Hence, by (\ref{shyhsh23609e2})--(\ref{shyhsh23609e3}), it follows that $f_{\varepsilon }\in C^{\infty }\left( \ell^{2} \right)  $.

In order to show $f_{\varepsilon }\in C^{\infty }_{F^{\infty }}\left( \ell^{2} \right)  $, it suffices to prove $\partial_{\alpha } \overline{\partial }_{\beta } f_{\varepsilon } \in C^{1}_{F}\left( \ell^{2} \right)  $ for any fixed $\alpha,\beta\in \n$, which follows from Proposition \ref{llxhfxgx236116} and (\ref{shyhsh23609e2})--(\ref{shyhsh23609e4}).

Finally, in the case that $\int_{\ell^{2} } \vartheta \left( \zeta \right)  dP\left( \zeta \right)=1$, similar to the proof of (\ref{shyhsh23609x1}), for each $\varepsilon \in(0,1)$, it is easy to find that
 $$
  \left| f_{\varepsilon }\left( \textbf{z}\right)- f\left( \textbf{z}\right)  \right| \leqslant \int_{B_{r}} \left|\left( f\left( \textbf{z}-\varepsilon \zeta \right)  -f\left( \textbf{z}\right) \right)\vartheta \left( \zeta \right)\right|  dP\left( \zeta \right) \leqslant \varepsilon rC\left( B_{s+r} \right)  \int_{B_{r}}  \left| \vartheta \left( \zeta \right)  \right|   dP\left( \zeta \right),
 $$
which gives $f_{\varepsilon }\left( \textbf{z}\right)\to f\left( \textbf{z}\right)$ as $\varepsilon\to0^+$. This completes the proof of  Theorem \ref{pmgjsh236118}.
\end{proof}

In the sequel, for any $\tau\geqslant 0$, we shall use frequently the following function:
    \begin{equation}\label{shyhsh236113}
	\mathcal{I}_{\tau} \left( t\right)  =\begin{cases}1,&t\in (-\infty ,\tau],
\\ \left( e^{\frac{1}{t-\tau-1} }-1\right)  e^{-\frac{e^{\frac{1}{t-\tau-1} }}{t-\tau} }+1,&t\in \left( \tau,\tau+1\right),
\\ 0,&t\in [\tau+1,+\infty ).\end{cases}
   \end{equation}

   \begin{proposition}\label{pro240130}
	It holds that
    \begin{equation}\label{shyhshdxzh236114}
  \mathcal{I}_{\tau} \in C^{\infty }\left( \mathbb{R} \right),\q 0\leqslant \mathcal{I}_{\tau} \leqslant 1,\quad -K_0\leqslant \mathcal{I}^{\prime }_{\tau} \leqslant 0,\q\forall\;\tau\geqslant 0,
\end{equation}
where $K_0>0$ is a constant, independent of $\tau$.
\end{proposition}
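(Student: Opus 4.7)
\textbf{Proof plan for Proposition \ref{pro240130}.}
The first move is to exploit translation invariance: since $\mathcal{I}_{\tau}(t)=\mathcal{I}_{0}(t-\tau)$ for every $t\in\mathbb{R}$ and $\tau\geqslant 0$, all four conclusions reduce to the single function $\mathcal{I}_{0}$, and the asserted independence of $\tau$ in the bound $-K_{0}\leqslant \mathcal{I}'_{\tau}\leqslant 0$ follows automatically. On the transition interval $(0,1)$ I would introduce the auxiliary quantities
\[
u(s)\triangleq e^{1/(s-1)}=e^{-1/(1-s)},\qquad v(s)\triangleq u(s)/s,
\]
so that $\mathcal{I}_{0}(s)-1=(u(s)-1)e^{-v(s)}$ on $(0,1)$. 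A short computation gives $u'(s)=-u(s)/(1-s)^{2}<0$ and $v'(s)=-u(s)\bigl[s+(1-s)^{2}\bigr]/[s^{2}(1-s)^{2}]<0$.

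The bound $0\leqslant \mathcal{I}_{0}\leqslant 1$ is then transparent: since $1/(s-1)<0$ on $(0,1)$ we have $u(s)\in(0,e^{-1})$ and $e^{-v(s)}\in(0,1)$, hence $(u(s)-1)e^{-v(s)}\in(-1,0)$. For the monotonicity I would differentiate directly:
\[
\mathcal{I}'_{0}(s)=e^{-v(s)}\bigl[u'(s)+(1-u(s))v'(s)\bigr].
\]
Since $u(s)<1$, both $u'(s)<0$ and $(1-u(s))v'(s)<0$, so the bracket is strictly negative and $\mathcal{I}'_{0}(s)<0$ on $(0,1)$.

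The main technical point is verifying that $\mathcal{I}_{0}\in C^{\infty}(\mathbb{R})$, i.e.\ that the one-sided derivatives of all orders of the middle piece match the constant pieces at $s=0$ and $s=1$. At $s=1^{-}$ the idea is that $u(s)=e^{-1/(1-s)}$ is the classical $C^{\infty}$-flat function at $s=1$, all of whose derivatives vanish there; writing $\mathcal{I}_{0}(s)=1-e^{-v(s)}+u(s)e^{-v(s)}$ and using that $v$ inherits the flatness of $u$ (with $s\to 1$), the smooth composition $1-e^{-v}$ vanishes to all orders by Fa\`a di Bruno, and so does the product $ue^{-v}$. At $s=0^{+}$ the crucial estimate is the uniform lower bound $v(s)\geqslant e^{-1}/s$ on, say, $(0,1/2]$, which forces $e^{-v(s)}\leqslant e^{-e^{-1}/s}$; Leibniz together with an induction on the order shows that every derivative of $(u-1)e^{-v}$ is a finite sum of terms of the form $R(s)e^{-v(s)}$ with $R$ a rational function having only a pole at $0$, and any such product decays faster than every power of $s$, so all one-sided derivatives at $0^{+}$ vanish. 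This is the step I expect to be the principal obstacle, since one must show the decay is uniform enough to survive the rational factors generated by repeated differentiation; the explicit domination by $e^{-c/s}$ with $c=e^{-1}$ is what makes it work.

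Once $\mathcal{I}_{0}\in C^{\infty}(\mathbb{R})$ is established, the function $\mathcal{I}'_{0}$ is continuous on $\mathbb{R}$ and identically $0$ outside $[0,1]$, so it attains a finite minimum on the compact interval $[0,1]$; combined with $\mathcal{I}'_{0}\leqslant 0$ throughout, setting $K_{0}\triangleq -\min_{s\in\mathbb{R}}\mathcal{I}'_{0}(s)\in[0,\infty)$ yields a constant satisfying $-K_{0}\leqslant \mathcal{I}'_{0}\leqslant 0$. By the initial translation identity, the same $K_{0}$ works for every $\mathcal{I}_{\tau}$, completing the proof.
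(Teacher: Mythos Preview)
Your approach is essentially the same as the paper's: reduce to $\tau=0$ by translation, show the derivative is strictly negative on $(0,1)$, and take $K_{0}$ as the maximum of $|\mathcal{I}'_{0}|$ (which exists by continuity and the vanishing at the endpoints). The paper does the derivative step by a direct expansion of $f'(x)$ and the algebraic bound $e^{1/(x-1)}\leqslant 1$, whereas you use the factored form $e^{-v}[u'+(1-u)v']$ and a sign analysis of each piece; both are variants of the same computation. Your write-up is in fact more complete than the paper's, which silently takes $\mathcal{I}_{0}\in C^{\infty}(\mathbb{R})$ and $0\leqslant\mathcal{I}_{0}\leqslant 1$ for granted and only argues the derivative bound.

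One small slip to fix: on $(0,1/2]$ you have $1/(s-1)\in[-2,-1)$, so $u(s)=e^{1/(s-1)}\in[e^{-2},e^{-1})$ and hence $v(s)=u(s)/s\geqslant e^{-2}/s$, not $e^{-1}/s$ (indeed $u(s)<e^{-1}$ strictly for every $s\in(0,1)$). This does not affect the argument, since any positive constant $c$ in the bound $v(s)\geqslant c/s$ suffices to make $e^{-v(s)}$ dominate the rational factors arising from differentiation. You might also remark that $K_{0}>0$ (as the statement requires) follows from the strict inequality $\mathcal{I}'_{0}(s)<0$ on $(0,1)$.
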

\begin{proof}
Without loss of generality, we consider only the case $\tau=0$. For $x\in \left( 0,1\right)  $, we let $f\left( x\right)  =\left( e^{\frac{1}{x-1} }-1\right)  e^{-\frac{e^{\frac{1}{x-1} }}{x} }$. One can check that
$$
f^{\prime }\left( x\right)  =\frac{e^{\frac{1}{x-1} -\frac{e^{\frac{1}{x-1} }}{x} }}{x^{2}\left( 1-x\right)^{2}  } \cdot \left[ -2x^{2}+x+e^{\frac{1}{x-1} }\left( x^{2}-x+1\right)  -1\right] .
$$
Hence
$$
 f^{\prime }\left( x\right)  \leqslant \frac{e^{\frac{1}{x-1} -\frac{e^{\frac{1}{x-1} }}{x} }}{x^{2}\left( 1-x\right)^{2}  } \cdot \left[ -2x^{2}+x+\left( x^{2}-x+1\right)  -1\right] =-\frac{e^{\frac{1}{x-1} -\frac{e^{\frac{1}{x-1} }}{x} }}{\left( 1-x\right)^{2}  } < 0.
$$
On the other hand, noting that $\lim_{x\to0^+}f^{\prime }\left( x\right) =\lim_{x\to1^-}f^{\prime }\left( x\right) =0$, we may choose
$$K_0=\max_{0<x<1}|f^{\prime }\left( x\right)|.$$
This completes the proof of Proposition \ref{pro240130}.
\end{proof}

Further, in what follows, for all $\tau\geqslant 0$, we set
 \begin{equation}\label{lsh23611912e1}
 I_{\tau}\left(  \textbf{z}\right)  \triangleq\begin{cases}\mathcal{I}_{\tau} \left( \eta \left(  \textbf{z}\right)  \right)  ,& \textbf{z}\in V\\ 0,& \textbf{z}\notin V\end{cases},
 \end{equation}
where $\mathcal{I}_{\tau}$  is given by \eqref{shyhsh236113}, and $\eta$ is the exhaustion function (on $V$) under consideration (which may be changed from one place to another but it can be distinguished from the context).
Also, we choose a function:
 \begin{equation}\label{lsh23611911}
 \Theta \left( \cdot\right)  \in C^{\infty }\left( \mathbb{R} \right)\hbox{ with }0\leqslant \Theta \left( \cdot\right)  \leqslant 1, \Theta \left( t\right)  =1 \hbox{ when }\left| t\right|  \leqslant \frac{1}{4}\hbox{, and }\Theta \left( t\right)  =0\hbox{ when }\left| t\right|  \geqslant 1.
   \end{equation}
 Put
  \begin{equation}\label{lsh23611912}
 \vartheta \left(  \textbf{z}\right)  \triangleq\Theta \left( \left| \left|  \textbf{z}\right|  \right|^{2}  \right),\q \textbf{z}\in \ell^{2}.
 \end{equation}
Since \cite[Lemma 2.1]{WYZ} gives $P\left( \left\{ \textbf{z}\in \ell^{2} :\left| \left| \textbf{z}\right|  \right|  <\frac{1}{4} \right\}  \right)  >0$, we can choose a constant
\begin{equation}\label{240122e9}
c\triangleq \left( \int_{\ell^{2} } \vartheta \left( \zeta \right)  dP\left( \zeta \right)  \right)^{-1}  >0,
\end{equation}
where $\vartheta \left( \cdot\right)$ is given by (\ref{lsh23611912}). By \cite[Lemma 5.2]{WYZ}, we may choose a function
\begin{equation}\label{240122e1}
\psi \in C^{\infty }\left( \mathbb{R} \right)  \hbox{ with }\psi \left( t\right)  =0\hbox{ for all }t\leqslant 0,\hbox{ and }\psi^{\left( j\right)  } \left( t\right)  >0\hbox{ for all }t>0\hbox{ and  }j=0,1,2.
\end{equation}

The following result guarantees the existence of smooth exhaustion functions on $V$.

\begin{theorem}\label{ghqjhshczx23613}
There exists an exhaustion function (on $V$) in the space $C^{\infty }_{F^{\infty}}\left( V\right)  $.
\end{theorem}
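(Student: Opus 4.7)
The plan is to construct $\eta^{*}$ as a locally finite sum of smooth cut-offs obtained by mollifying Lipschitz truncations of a base Lipschitz exhaustion. First, using Lemma \ref{gx230614t1} I fix a Lipschitz exhaustion $\eta_{0}\in \hbox{\rm Lip$\,$}(V)$, and write $W_{k} := V_{\eta_{0},k}$ and $W_{k}^{o} := V_{\eta_{0},k}^{o}$ for $k \in \mathbb{N}$. By Proposition \ref{jhbh23619} and Remark \ref{jhbh236110}, the sets $W_{k}^{o}$ exhaust $V$ and satisfy $W_{k} \stackrel{\circ}{\subset} W_{k+1}^{o}$, so the positive numbers $\delta_{k} := \min\{d(W_{k-1},\partial W_{k}^{o}),\, d(W_{k+1},\partial W_{k+2}^{o})\}$ are well-defined. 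For each $k$ I would define $\chi_{k}:\ell^{2}\to[0,1]$ by setting $\chi_{k}(z):=1-\mathcal{I}_{k}(\eta_{0}(z))$ for $z\in V$ (with $\mathcal{I}_{k}$ as in \eqref{shyhsh236113}) and $\chi_{k}(z):=1$ for $z\notin V$. Then $\chi_{k}$ vanishes on $W_{k}$ and equals $1$ on $\ell^{2}\setminus W_{k+1}^{o}$; it is continuous across $\partial V$ because $\eta_{0}\to+\infty$ there, and combining the uniform bound $|\mathcal{I}_{k}'|\le K_{0}$ from Proposition \ref{pro240130} with the Lipschitz property of $\eta_{0}$ on the bounded set $W_{k+2}^{o}\stackrel{\circ}{\subset}V$ shows $\chi_{k}\in\hbox{\rm Lip$\,$}(\ell^{2})$.

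I would then apply Theorem \ref{pmgjsh236118} to $\chi_{k}$ with the normalized test function $c\vartheta\in C^{\infty}_{0}(\ell^{2})$ and parameter $\varepsilon_{k}\in(0,\delta_{k})$ to obtain $\tilde\chi_{k}:=(\chi_{k})_{\varepsilon_{k}}\in C^{\infty}_{F^{\infty}}(\ell^{2})$ with values in $[0,1]$. Since $\supp\vartheta\subset B_{1}$, the value $\tilde\chi_{k}(z)$ depends only on the values of $\chi_{k}$ on $B_{\varepsilon_{k}}(z)$. The choice $\varepsilon_{k}<d(W_{k-1},\partial W_{k}^{o})$ gives $B_{\varepsilon_{k}}(z)\subset W_{k}^{o}$ for every $z\in W_{k-1}$, so $\tilde\chi_{k}\equiv 0$ on $W_{k-1}$; and $\varepsilon_{k}<d(W_{k+1},\partial W_{k+2}^{o})$ gives $W_{k+1}^{o}+B_{\varepsilon_{k}}\subset W_{k+2}^{o}$, so that $B_{\varepsilon_{k}}(z)\cap W_{k+1}^{o}=\emptyset$ whenever $z\in\ell^{2}\setminus W_{k+2}^{o}$; since $\chi_{k}\equiv 1$ on that ball, $\tilde\chi_{k}\equiv 1$ on $\ell^{2}\setminus W_{k+2}^{o}$.

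Finally I would set $\eta^{*}(z):=\sum_{k=1}^{\infty}\tilde\chi_{k}(z)$ for $z\in V$. By the first property above, on $W_{j}$ all terms with $k\ge j+1$ vanish, so the series reduces to the finite sum $\sum_{k=1}^{j}\tilde\chi_{k}$; combined with Remark \ref{jhbh236110} (which says $S\subset W_{j}^{o}$ for some $j$ whenever $S\stackrel{\circ}{\subset}V$), this places $\eta^{*}$ in $C^{\infty}_{F^{\infty}}(V)$. By the second property, whenever $\eta_{0}(z)\ge N+2$ one has $\tilde\chi_{k}(z)=1$ for every $k\le N$, so $\eta^{*}(z)\ge N$; hence $\{z\in V:\eta^{*}(z)\le t\}\subset W_{\lceil t\rceil+2}\stackrel{\circ}{\subset}V$ for every $t\in\mathbb{R}$, making $\eta^{*}$ an exhaustion function on $V$. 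The main obstacle is the simultaneous achievement of both support properties of $\tilde\chi_{k}$: one needs $\tilde\chi_{k}\equiv 0$ on a buffer strictly inside $W_{k}$ (so the series is locally finite, hence smooth) and $\tilde\chi_{k}\equiv 1$ on a buffer strictly outside $W_{k+1}^{o}$ (so the sum diverges at $\partial V$). This is feasible only because $\vartheta$ has genuine compact support in $\ell^{2}$ (making the mollification strictly local) and because Proposition \ref{jhbh23619}(2) (via Remark \ref{jhbh236110}) gives uniform separation of consecutive sub-level sets of $\eta_{0}$.
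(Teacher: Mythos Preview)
Your proposal is correct and gives a genuinely more elementary argument than the paper's. The paper mollifies the products $I_{j+1}\eta$ (truncations of the Lipschitz exhaustion $\eta$ itself) to obtain approximants $\eta_j\approx\eta$ on $V_j^o$, and then combines them through a convex increasing function $\psi$ to produce $\Psi=\sum_j \tfrac{\sup_{V_j}\eta}{\psi(1)}\,\psi(\eta_j+C(j+\tfrac12)\varepsilon_j+2-j)\geqslant\eta$. You instead mollify the bare cut-offs $\chi_k=1-\mathcal{I}_k(\eta_0)$ and sum them directly; this avoids the auxiliary function $\psi$ and the quantitative comparison $\varrho\leqslant\varrho_j\leqslant\varrho+1$, at the cost of losing the explicit domination $\Psi\geqslant\eta$ (you recover only $\eta^\ast\geqslant\eta_0-2$ implicitly). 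Two small points to tighten: (i) your $\delta_k$ uses $W_{k-1}$, so for $k=1$ you should declare $W_0:=V_{\eta_0,0}$ (possibly empty, in which case take $d(\emptyset,\cdot)=+\infty$); (ii) the Lipschitz claim for $\chi_k$ on all of $\ell^2$ needs the same case-splitting the paper performs for $I_{j+1}\eta$, using that $\chi_k\equiv 1$ outside $W_{k+1}^o$ and that $W_{k+1}\stackrel{\circ}{\subset}W_{k+2}^o$ bounds $\|\textbf{z}-\textbf{w}\|$ from below when the points lie on opposite sides. Finally, note that the paper's more elaborate scheme is not gratuitous: it is deliberately parallel to the construction in Theorem~\ref{jntyshxnty23614}, where one must mollify the plurisubharmonic function $\varrho$ itself (not just a cut-off) in order to preserve plurisubharmonicity after smoothing; your simpler route would not carry over to that setting.
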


\begin{proof}
It suffices to consider the case $V\not=\ell^2$. From the proof of Lemma \ref{gx230614t1}, we may choose $c_0>0$ such that
$$\eta \left( \textbf{z}\right)  \triangleq -\ln d\left( \textbf{z},\partial V\right)  +\left| \left| \textbf{z}\right|  \right|^{2}+c_0  \geqslant 0,\q \textbf{z}\in V.$$ Moreover,  $\eta$ is a continuous exhaustion function on $V$ and for each $t \geqslant 0$, there exists a constant $C(t)>0, $ such that
 \begin{equation}\label{lsh236119}
 \left\{
\begin{array}{ll}
		\left| \eta \left(  \textbf{z}\right)  -\eta \left(  \textbf{w}\right)  \right|  \leqslant C\left( t\right)  \left| \left|  \textbf{z}- \textbf{w}\right|  \right|  ,\\[3mm]
\left| \eta \left(  \textbf{z}\right)  \right|  \leqslant C\left( t\right)  ,
\end{array}\right.\q\forall\;  \textbf{z}, \textbf{w}\in V_{t}.
	\end{equation}
	
	Our goal is to construct a function $\Psi \in C^{\infty }_{F^{\infty}}\left( V\right)  $ with $\eta \left( \cdot\right)  \leqslant \Psi \left( \cdot\right)  $.
	
By the conclusion $(2)$ of Proposition \ref{jhbh23619} and Remark \ref{jhbh236110} and noting \eqref{lsh236119}, one has $V_{j}\stackrel{\circ}{\subset} V^{o}_{j+{1}/{2} }$ and $V_{j+2}\stackrel{\circ}{\subset} V^{o}_{j+3}$ for each $j\in \mathbb{N}$. We may choose $\varepsilon_{j} \in \left( 0,\frac{1}{2C\left( j+{1}/{2} \right)  } \right)$ such that
	\begin{eqnarray}\label{fw236120}
 \overline{B\left(  \textbf{z},\varepsilon_{j} \right)  } \subset V^{o}_{j+{1}/{2} },\q \forall \; \textbf{z}\in V^{o}_{j}
	\end{eqnarray}
and
	\begin{eqnarray}\label{fw236121}
		\overline{B\left(  \textbf{z},\varepsilon_{j} \right)  } \bigcap V^{o}_{j+2}=\emptyset ,\q \forall \; \textbf{z}\in \ell^{2} \setminus V^{o}_{j+3}.
	\end{eqnarray}

Recall (\ref{lsh23611912e1}) for $I_j$. We claim that
\begin{eqnarray}\label{fw2401121}
I_{j+1}(\cdot)\eta(\cdot)\in \hbox{\rm Lip$\,$}(\ell^2).
\end{eqnarray}
Indeed, for each $ \textbf{z}, \textbf{w}\in V_{j+4}^{o}$, using \eqref{lsh236119}, noting $\left| \mathcal{I}^{\prime }_{j+1} \right|  \leqslant K_0$ (given in Proposition \ref{pro240130}) and $\left| I_{j+1}\right|  \leqslant 1$ (by \eqref{shyhshdxzh236114}), and by the Lagrange mean value theorem, we have
$$\begin{aligned}
\left| I_{j+1}\left(  \textbf{z}\right)  \eta \left(  \textbf{z}\right)  -I_{j+1}\left(  \textbf{w}\right)  \eta \left(  \textbf{w}\right)  \right|   &\leqslant \left| I_{j+1}\left( \textbf{z}\right)  \right|  \cdot \left| \eta \left(  \textbf{z}\right)  -\eta \left( \textbf{w}\right)  \right|  +\left| I_{j+1}\left(  \textbf{z}\right)  -I_{j+1}\left(  \textbf{w}\right)  \right|  \cdot \left| \eta \left(  \textbf{w} \right)  \right| \\&\leqslant \left| \eta \left(  \textbf{z} \right)  -\eta \left(  \textbf{w} \right)  \right| +C\left( j+4\right)  \left| I_{j+1}\left(  \textbf{z}\right)  -I_{j+1}\left(  \textbf{w} \right)  \right|\\&   \leqslant \left| \eta \left(  \textbf{z}\right)  -\eta \left(  \textbf{w}\right)  \right|  +K_0C\left( j+4\right)  \left| \eta \left(  \textbf{z}\right)  -\eta \left(  \textbf{w} \right)  \right| \\&=\left[ 1+K_0C\left( j+4\right)  \right]  \cdot \left| \eta \left(  \textbf{z}\right)  -\eta \left(  \textbf{w} \right)  \right|  \\&\leqslant \left[ 1+K_0C\left( j+4\right)  \right]  C\left( j+4\right)  \cdot \left| \left| \textbf{z}- \textbf{w}\right|  \right|    .
 \end{aligned}$$
By $\supp \left( I_{j+1}\eta \right)  \subset V_{j+2} \stackrel{\circ}{\subset} V $, we have
 $$
 \begin{aligned}
 &\left| I_{j+1}\left(  \textbf{z}\right)  \eta \left(  \textbf{z}\right) -I_{j+1}\left(  \textbf{w}\right)  \eta \left(  \textbf{w}\right)  \right|  =0,
 \\&\qq\qq\qq\forall\; ( \textbf{z}, \textbf{w})\in\left(\left( \ell^{2} \setminus V^{o}_{j+4}\right)\times\left( \ell^{2} \setminus V_{j+2}\right)\right)\bigcup
 \left(\left( V^{o}_{j+4} \setminus V_{j+2}\right)\times\left( \ell^{2} \setminus V_{j+4}^{o}\right)\right).
\end{aligned} $$
For each $( \textbf{z}, \textbf{w})\in \left(\left(\ell^{2} \setminus V^{o}_{j+4}\right)\times V_{j+2}\right)\bigcup \left(V_{j+2}\times\left(\ell^{2} \setminus V^{o}_{j+4}\right)\right)$, \eqref{fw236121} gives $\left| \left| \textbf{z}-\textbf{w}\right|  \right|  > \varepsilon_{j} $, and hence
$$\left| I_{j+1}\left( \textbf{z}\right)  \eta \left( \textbf{z}\right)  -I_{j+1}\left( \textbf{w}\right)  \eta \left( \textbf{w}\right)  \right|  =\left| I_{j+1}\left( \textbf{w}\right)  \eta \left( \textbf{w}\right)  \right| \leqslant j+2  < \frac{j+2}{\varepsilon_{j} } \left| \left| \textbf{z}-\textbf{w}\right|  \right|  .$$
Combining the above, it is easy to deduce that $$\sup_{\textbf{z},\textbf{w}\in \ell^{2},\textbf{z}\neq \textbf{w}} \frac{\left| I_{j+1}\left( \textbf{z}\right)  \eta \left( \textbf{z}\right)  -I_{j+1}\left( \textbf{w}\right)  \eta \left( \textbf{w}\right)  \right|  }{\left| \left| \textbf{z}-\textbf{w}\right|  \right|  }<\infty  ,
 $$
which gives (\ref{fw2401121}).

We put (Recall (\ref{240122e9}) and (\ref{lsh23611912}) respectively for $c$ and $\vartheta \left( \cdot\right)$)
$$\eta_{j} \left(  \textbf{z}\right)  \triangleq c\int_{\ell^{2} } I_{j+1}\left(  \textbf{z}-\varepsilon_{j} \zeta \right)  \eta \left(  \textbf{z}-\varepsilon_{j} \zeta \right)  \vartheta \left( \zeta \right)  dP\left( \zeta \right) ,\q\forall\; \textbf{z}\in\ell^2   .
 $$
By $\vartheta \in C^{\infty }_{0}\left( \ell^{2} \right)  $, (\ref{fw2401121}) and Theorem \ref{pmgjsh236118}, we have $ \eta_{j} \in C^{\infty }_{F^{\infty }}\left( \ell^{2}\right)   $. Also,
\eqref{fw236121} gives $\supp  \eta_{j} \subset V_{j+3}$, and hence $\eta_{j} \in C^{\infty }_{0,F^{\infty}}\left( V\right)  $.
	
	By \eqref{fw236120}, for each $ \textbf{z}\in V^{o}_{j}$ and $\zeta \in B_1 $, one have $ \textbf{z}-\varepsilon_{j} \zeta \in V^{o}_{j+{1}/{2} }$. By \eqref{lsh236119} and noting that $\supp \vartheta(\cdot)\subset B_1 $,  we have
 \begin{equation}\label{lsh236122}
		\begin{aligned}
			\eta_{j} \left(  \textbf{z}\right)  &=c\int_{\ell^{2} } \eta \left(  \textbf{z}-\varepsilon_{j} \zeta \right)  \vartheta \left( \zeta \right)  dP\left( \zeta \right) \\& \geqslant c\int_{\ell^{2} } \left( \eta \left(  \textbf{z}\right)  -C\left( j+{1}/{2} \right)  \varepsilon_{j} \left| \left| \zeta \right|  \right|  \right)  \vartheta \left( \zeta \right)  dP\left( \zeta \right)  \geqslant \eta \left(  \textbf{z}\right)  -C\left( j+{1}/{2} \right)  \varepsilon_{j},
		\end{aligned}
	\end{equation}
	and
\begin{equation}\label{lsh236123}
		\begin{aligned}
			\eta_{j} \left( \textbf{z}\right)  &=c\int_{\ell^{2} } \eta \left( \textbf{z}-\varepsilon_{j} \zeta \right)  \vartheta \left( \zeta \right)  dP\left( \zeta \right) \\& \leqslant c\int_{\ell^{2} } \left( \eta \left( \textbf{z}\right)  +C\left( j+{1}/{2} \right)  \varepsilon_{j} \left| \left| \zeta \right|  \right|  \right)  \vartheta \left( \zeta \right)  dP\left( \zeta \right)\leqslant \eta \left( \textbf{z}\right)  +C\left( j+{1}/{2} \right)  \varepsilon_{j} .
		\end{aligned}
	\end{equation}
	Hence, combining \eqref{lsh236122} and  \eqref{lsh236123} with $\varepsilon_{j} \in \left( 0,\frac{1}{2C\left( j+{1}/{2} \right)  } \right)  $, we arrive at
 \begin{eqnarray}
		\eta \left( \textbf{z}\right)  \leqslant \eta_{j} \left( \textbf{z}\right)  +C\left( j+{1}/{2} \right)  \varepsilon_{j} \leqslant \eta \left( \textbf{z}\right)  +2C\left( j+{1}/{2} \right)  \varepsilon_{j} <\eta \left( \textbf{z}\right)  +1,\q\forall\;\textbf{z}\in V^{o}_{j}.\label{lsh236125}
	\end{eqnarray}

Recalling (\ref{240122e1}) for $\psi $, we put
 $$\Psi \left( \textbf{z}\right)  \triangleq\sum^{\infty }_{j=1} \frac{1}{\psi \left( 1\right)  }\sup\limits_{V_{j}} \eta \cdot \psi \left( \eta_{j} \left( \textbf{z}\right)  +C\left( j+{1}/{2} \right)  \varepsilon_{j} +2-j\right)  ,\q\textbf{z}\in V.$$
Note that for each $j,k\in \mathbb{N}, j\geqslant k+3$ and $\textbf{z}\in V^{o}_{k}$, by \eqref{lsh236125}, we have $$\eta_{j} \left( \textbf{z}\right)  +C\left( j+{1}/{2} \right)  \varepsilon_{j} +2-j<\eta \left( \textbf{z}\right)  +1+2-j<k+3-j\leqslant 0,$$ which means that $$\psi \left( \eta_{j} \left( \textbf{z}\right)  +C\left( j+{1}/{2} \right)  \varepsilon_{j} +2-j\right)  =0,\q \forall\;j\geqslant k+3,\textbf{z}\in V^{o}_{k}.$$
	Therefore, $\Psi \left( \textbf{z}\right)  = \sum\limits^{k+2}_{j=1} \frac{1}{\psi \left( 1\right)  } \sup\limits_{V_{j}} \eta\cdot \psi \left( \eta_{j} \left( \textbf{z}\right)  +C\left( j+{1}/{2} \right)  \varepsilon_{j} +2-j\right)  $ for $\textbf{z}\in V^{o}_{k}$. Hence $\Psi $ is well-defined on $V$ and $\Psi \in C^{\infty }_{F^{\infty }}\left( V\right)  $.
	
If $\textbf{z}\in V_{1}^{o}$, then, by \eqref{lsh236125} and $\eta(\textbf{z}) \geqslant 0$, it follows that
 $$\eta_{1} \left( \textbf{z}\right)  +C\left( 1+{1}/{2} \right)  \varepsilon_{1} +2-1\geqslant \eta \left( \textbf{z}\right)  +1\geqslant 1,$$ which means that $$\Psi \left( \textbf{z}\right)  \geqslant \frac{1}{\psi \left( 1\right)  } \sup\limits_{V_{1}} \eta \cdot\psi \left( 1\right)  \geqslant \eta \left( \textbf{z}\right)  .$$
	
	If $k\geqslant2$ and $\textbf{z}\in V_{k}^{o}\setminus V_{k-1}^{o}$,  then \eqref{lsh236125} gives
 $$\eta_{k} \left( \textbf{z}\right)  +C\left( k+{1}/{2} \right)  \varepsilon_{k} +2-k\geqslant k-1+2-k=1,
 $$
which means that $$\Psi \left( \textbf{z}\right)  \geqslant \frac{1}{\psi \left( 1\right)  } \sup\limits_{V_{k}} \eta \cdot\psi \left( 1\right)  \geqslant \eta \left( \textbf{z}\right)  .$$
This completes the proof of Theorem \ref{ghqjhshczx23613}.
\end{proof}

\begin{remark}\label{llx2401131}
In the sequel, to simply the notations, unless otherwise stated, we still denote by $\eta$ the exhaustion function (on $V$, in the space $C^{\infty }_{F^{\infty}}\left( V\right)$) given in Theorem \ref{ghqjhshczx23613}, and write $V_{t}$ and $V_{t}^o$ respectively as that in (\ref{gx2401132}) and (\ref{gx2401133}).
\end{remark}

We provide below an application of Theorem \ref{ghqjhshczx23613}, which strengthens the conclusion of Proposition \ref{llxhfxgx236116}.
\begin{proposition}\label{llx236126}
It holds that
 $$
 C^{1}\left( V\right)\cap \hbox{\rm Lip$\,$}\left( V\right)=  C^{1}_{F}\left( V\right) .
 $$
\end{proposition}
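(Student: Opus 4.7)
The containment $C^{1}(V)\cap \hbox{\rm Lip$\,$}(V)\subset C^{1}_{F}(V)$ is already Proposition \ref{llxhfxgx236116}, so my plan is to establish the reverse inclusion: every $f\in C^{1}_{F}(V)$ is Lipschitz on $V$ in the sense of (\ref{230608e1}). Fix $f\in C^{1}_{F}(V)$ and a test set $S\stackrel{\circ}{\subset} V$. First I choose $\delta\in(0,d(S,\partial V))$ (any positive $\delta$ works when $V=\ell^{2}$) so that the closed enlargement $S_{\delta}\triangleq \{\textbf{z}\in V:d(\textbf{z},S)\leqslant \delta\}$ remains bounded and satisfies $d(S_{\delta},\partial V)\geqslant d(S,\partial V)-\delta>0$, hence $S_{\delta}\stackrel{\circ}{\subset} V$. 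By the defining property of $C^{1}_{F}(V)$, the constants
$$A\triangleq \sup_{S}|f|,\qquad M^{2}\triangleq \sup_{S_{\delta}}\sum_{j=1}^{\infty}\bigl(|\partial_{j}f|^{2}+|\overline{\partial}_{j}f|^{2}\bigr)$$
are both finite.

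For $\textbf{z},\textbf{w}\in S$ with $\|\textbf{z}-\textbf{w}\|\geqslant \delta$ the trivial estimate $|f(\textbf{z})-f(\textbf{w})|\leqslant 2A\leqslant (2A/\delta)\|\textbf{z}-\textbf{w}\|$ already suffices. The nontrivial case is $\|\textbf{z}-\textbf{w}\|<\delta$, and here the main obstacle is that the $C^{1}$ notion used in this paper supplies only coordinatewise partial derivatives, so there is no immediate chain rule along the straight segment $[\textbf{z},\textbf{w}]$ (which would require full Fr\'echet differentiability). I bypass this via the finite-dimensional interpolating curves
$$\gamma_{n}(t)\triangleq \bigl((1-t)z_{1}+tw_{1},\ldots,(1-t)z_{n}+tw_{n},z_{n+1},z_{n+2},\ldots\bigr),\qquad t\in[0,1],\; n\in\mathbb{N}.$$
A direct estimate yields $\|\gamma_{n}(t)-\textbf{z}\|\leqslant \|\textbf{z}-\textbf{w}\|<\delta$, so $\gamma_{n}(t)\in S_{\delta}$ for every $t\in[0,1]$, and the one-variable function $g_{n}(t)\triangleq f(\gamma_{n}(t))$ is genuinely $C^{1}$ on $[0,1]$ by standard finite-dimensional calculus, since only the first $n$ coordinates are varying.

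The finite-dimensional chain rule together with Cauchy--Schwarz and the identity $|\partial_{x_{j}}f|^{2}+|\partial_{y_{j}}f|^{2}=2(|\partial_{j}f|^{2}+|\overline{\partial}_{j}f|^{2})$ yields
$$|g_{n}'(t)|\leqslant \left(\sum_{j=1}^{n}\bigl(|\partial_{x_{j}}f|^{2}+|\partial_{y_{j}}f|^{2}\bigr)(\gamma_{n}(t))\right)^{1/2}\|\textbf{z}-\textbf{w}\|\leqslant \sqrt{2}\,M\,\|\textbf{z}-\textbf{w}\|.$$
Integrating over $[0,1]$ and noting $\gamma_{n}(0)=\textbf{z}$, $\gamma_{n}(1)=(w_{1},\ldots,w_{n},z_{n+1},\ldots)$ gives $|f(\gamma_{n}(1))-f(\textbf{z})|\leqslant \sqrt{2}M\|\textbf{z}-\textbf{w}\|$. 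Since $\|\gamma_{n}(1)-\textbf{w}\|^{2}=\sum_{j>n}|z_{j}-w_{j}|^{2}\to 0$ as $n\to\infty$ and $f$ is continuous, passing to the limit delivers $|f(\textbf{z})-f(\textbf{w})|\leqslant \sqrt{2}M\|\textbf{z}-\textbf{w}\|$. Combining both cases, $f$ satisfies (\ref{230608e1}) on $S$ with the constant $C(S)=\max(\sqrt{2}M,2A/\delta)$, which proves $f\in \hbox{\rm Lip$\,$}(V)$ and completes the proof.
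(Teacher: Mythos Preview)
Your proof is correct and considerably more elementary than the paper's. The paper establishes the inclusion $C^{1}_{F}(V)\subset\hbox{\rm Lip$\,$}(V)$ by an indirect averaging argument: it first invokes Theorem \ref{ghqjhshczx23613} to obtain a smooth exhaustion function, builds a cutoff $\Phi\in C^{\infty}_{0,F}(V)$ so that $\Phi f$ is globally defined, forms the finite-dimensional Gaussian averages $g_k(\textbf{z}_k)=\int \Phi f\,dP_k$, bounds $\sum_j|\partial_j g_k|^2$ uniformly in $k$ by the $C^1_F$ bound on $\Phi f$, obtains a Lipschitz constant for each $g_k$ independent of $k$, and then passes to the limit via $L^2$ convergence (Proposition \ref{jwcz236112}) and an a.e.\ convergent subsequence, using continuity at the end to upgrade to all points. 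Your route bypasses all of this machinery: by moving only finitely many coordinates at a time along $\gamma_n$, you place yourself squarely in finite dimensions, where continuity of the partials already gives Fr\'echet differentiability and the chain rule, and then a simple continuity limit in $n$ finishes. What your argument buys is independence from Theorem \ref{ghqjhshczx23613} and from any measure-theoretic input; what the paper's argument buys is another illustration of the Gaussian partial-averaging technique that underlies Theorem \ref{pmgjsh236118} and is used repeatedly elsewhere.
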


\begin{proof}
By Proposition \ref{llxhfxgx236116}, it suffices to prove $ C^{1}_{F}\left( V\right) \subset  C^{1}\left( V\right)\cap \hbox{\rm Lip$\,$}\left( V\right)$. For this purpose we fix $S\stackrel{\circ}{\subset} V$ and $f\in  C^{1}_{F}\left( V\right)$.

By Theorem  \ref{ghqjhshczx23613} and Remark \ref{llx2401131}, we may choose an exhaustion function $\eta$ on $V$ in the space $C^{\infty }_{F^{\infty}}\left( V\right)$. By the conclusion $(3)$ of Proposition \ref{jhbh23619}, there exists $t\in \mathbb{R}$ such that $S\subset V_{t}^{o}$.
	
	Fix $t'>t$ and a function $\phi \in C^{\infty }\left( \mathbb{R} \right) $ with $0\leqslant \phi \leqslant 1$, $\phi \left( s\right)  =1$ for $s<t$, and $ \phi \left( s\right)  =0$ for $s>t^{\prime }$. Clearly, $\Phi \left( \cdot\right)  \triangleq\phi \left( \eta \left( \cdot\right)  \right)  \in C^{\infty }_{0,F}\left( V\right)( \subset L^{2}(\ell^{2},P) )$. Put
 $$
 g_{k}\left( \textbf{z}_k \right)  =\int \Phi \left( \textbf{z}_k,\textbf{z}^k\right) f \left( \textbf{z}_k,\textbf{z}^k\right)  dP_{k}\left( \textbf{z}^k\right), \q k=1,2,\cdots ,
 $$
where $\textbf{z}_k$ and $\textbf{z}^k$ are defined in (\ref{231128e1}) with $n$ therein replaced by $k$.
	
	By the Cauchy-Schwarz inequality and $\supp(\Phi f) \subset V_{t^{\prime }}$, for any $k\in \mathbb{N}$ and $ \textbf{z}_k\in \mathbb{C}^{k}$, we have
 $$
	\begin{aligned}
		\sum^{k}_{j=1} \left| \partial_{j} g_{k}\left(  \textbf{z}_k\right)  \right|^{2}  &=\sum^{k}_{j=1} \left| \int \partial_{j} \left( \Phi \left(  \textbf{z}_k, \textbf{z}^k\right)  f\left( \textbf{z}_k, \textbf{z}^k\right)  \right)  dP_{k}\left(  \textbf{z}^k\right)  \right|^{2}
\\&\leqslant \sum^{k}_{j=1} \int \left| \partial_{j} \left( \Phi \left(  \textbf{z}_k, \textbf{z}^k\right)  f\left(  \textbf{z}_k, \textbf{z}^k\right)  \right)  \right|^{2}  dP_{k}\left(  \textbf{z}^k\right)   \leqslant \sup_{V_{t^{\prime }}} \sum^{\infty }_{j=1} \left| \partial_{j} \left( \Phi f\right)  \right|^{2}.
	\end{aligned}
	$$
Similarly,
$	\sum^{k}_{j=1} \big| \overline{\partial_{j}} g_{k}\left(  \textbf{z}_k\right)  \big|^{2}   \leqslant \sup_{V_{t^{\prime }}} \sum^{\infty }_{j=1} \big| \overline{\partial_{j} } \left( \Phi f\right)  \big|^{2}$.
Hence, there is a positive number $C$, independent of $k$, such that (For each $\textbf{w}\in \ell^2$, $\textbf{w}_k$ is defined similarly as that in (\ref{231128e1}))
$$\left| g_{k}\left(  \textbf{z}_k\right)  -g_{k}\left(  \textbf{w}_k \right)  \right|  \leqslant C\left| \left|  \textbf{z}_k- \textbf{w}_k\right|  \right|_{\mathbb{C}^{k} }  ,\q  \forall\;  \textbf{z}_k, \textbf{w}_k \in \mathbb{C}^{k} .
 $$
Thanks to Proposition \ref{jwcz236112},  $g_{n}$ converges to $\Phi f$ in the Gauss measure as $n\to\infty$, hence there exists a subsequence $\left\{ g_{n_{k}}\right\}^{\infty }_{k=1}  $ of $\left\{ g_{k}\right\}^{\infty }_{k=1}  $, such that   $\lim\limits_{k\rightarrow \infty } g_{n_{k}}=\Phi f ,\  a.e.\;  P$. To simplify the notations, we still denote this subsequence by $\{g_k\}_{k=1}^\infty$. For almost all $ \textbf{z}, \textbf{w}\in V^{o}_{t}$,
$$
	\begin{aligned}
		\left| f \left(  \textbf{z}\right)  -f \left(  \textbf{w} \right)  \right|  &=\left| \Phi \left(  \textbf{z}\right)  f \left(  \textbf{z}\right)  -\Phi \left(  \textbf{w} \right) f \left(  \textbf{w}\right)  \right|  =\lim_{k\rightarrow \infty } \left| g_{k}\left(\textbf{z}_k\right)  -g_{k}\left( \textbf{w}_k \right)  \right|  \\&\leqslant C\lim_{k\rightarrow \infty } \left| \left| \textbf{z}_k-\textbf{w}_k \right|  \right|_{\mathbb{C}^{k} }  =C\left| \left| \textbf{z}-\textbf{w} \right|  \right| .
	\end{aligned}
	$$
By $ f \left( \cdot\right)  \in C\left( V^{o}_{t}\right)  $, we have
$$\left|f \left( \textbf{z}\right)  -f \left( \textbf{w} \right)  \right|  \leqslant C\left| \left| \textbf{z}-\textbf{w} \right|  \right|  ,\q  \forall\; \textbf{z},\textbf{w} \in V^{o}_{t}. $$
This completes the proof of Proposition \ref{llx236126}.
\end{proof}

\section{Semi-anti-plurisubharmonic functions}\label{section4}

We begin with the following notions.

\begin{definition}\label{bthshdy23611}
	A real-valued function $f\in C(V)$ is called globally semi-anti-plurisubharmonic (with a parameter $C$) on $V$, if for each $\textbf{z}\in V, \zeta\in \ell^{2}$ and $\delta>0$, there is $r\in (0,\delta)$ such that $\big\{ \textbf{z}+\rho e^{\sqrt{-1}\theta }\zeta :$ $\rho \in \left[ 0,r\right]  ,\theta \in \left[ 0,2\pi \right]  \big\}  \subset V$ and
 $$
\frac{1}{2\pi } \int^{2\pi }_{0} \left(f\left( \textbf{z}\right)  -f\left( \textbf{z}+re^{\sqrt{-1}\theta }\zeta \right)  \right)d\theta \geqslant -Cr^{2}\left| \left| \zeta \right|  \right|^{2}.
 $$
Generally, a real-valued function $f\in C(V)$ is called semi-anti-plurisubharmonic on $V$, if  $f|_{S}$ is globally semi-anti-plurisubharmonic on $S$ for each nonempty open set $S\stackrel{\circ}{\subset} V$.
\end{definition}
\begin{example}
	$\left| \left| \textbf{z}\right|  \right|^{2}  = \sum\limits^{\infty }_{i=1} \left|z_{i}\right|^{2}  $ is globally semi-anti-plurisubharmonic on $V$ with a parameter $1$ .
\end{example}
\begin{remark}
	Similar to the above, one can define globally semi-plurisubharmonic functions and semi-plurisubharmonic functions on $V$.
\end{remark}

We have the following characterization on globally semi-anti-plurisubharmonic functions on $V$.

\begin{lemma}\label{bthsh236136}
	A real-valued function $f\in C^{2}(V)$ is globally semi-anti-plurisubharmonic on $V$ with a parameter $C$ if and only if$$\sum_{i,j=1}^n  \partial_{i} \overline{\partial_{j} } f\left( \textbf{z}\right) \varsigma_{i}\overline{\varsigma_{j}} \leqslant C\sum^{n}_{i=1} \left| \varsigma_{i}\right|^{2}  ,\q\forall\; \left(n,\textbf{z}\right)\in \mathbb{N} \times V\hbox{ and }\left(\varsigma_{1},\cdots ,\varsigma_{n}\right)\in \mathbb{C}^n.$$
\end{lemma}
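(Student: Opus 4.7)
The plan is to reduce the statement to Lemma \ref{ghqd236130} via an auxiliary plurisubharmonic function. Specifically, define $g:V\to\mathbb{R}$ by
$$
g(\textbf{z})\triangleq C\|\textbf{z}\|^2-f(\textbf{z}).
$$
Since $\|\textbf{z}\|^{2}=\sum_{k}z_{k}\overline{z_{k}}$ has continuous partial derivatives of all orders (with $\partial_{i}\overline{\partial_{j}}\|\textbf{z}\|^{2}=\delta_{ij}$ by a direct calculation), and $f\in C^{2}(V)$, we have $g\in C^{2}(V)$ and
$$
\partial_{i}\overline{\partial_{j}}\,g(\textbf{z})=C\delta_{ij}-\partial_{i}\overline{\partial_{j}}\,f(\textbf{z}).
$$
Thus the Hessian inequality in the statement is \emph{exactly} the condition (d) of Lemma \ref{ghqd236130} for $g$.

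Next I would translate the integral inequality of Definition \ref{bthshdy23611} into a mean-value inequality for $g$. A direct expansion of $\|\textbf{z}+re^{\sqrt{-1}\theta}\zeta\|^{2}$ using the inner product, combined with $\int_{0}^{2\pi}e^{\pm\sqrt{-1}\theta}d\theta=0$, yields
$$
\frac{1}{2\pi}\int_{0}^{2\pi}\|\textbf{z}+re^{\sqrt{-1}\theta}\zeta\|^{2}\,d\theta=\|\textbf{z}\|^{2}+r^{2}\|\zeta\|^{2}.
$$
Substituting this into the defining inequality of Definition \ref{bthshdy23611} and rearranging, one sees that $f$ is globally semi-anti-plurisubharmonic on $V$ with parameter $C$ if and only if for each $\textbf{z}\in V$, $\zeta\in\ell^{2}$ and $\delta>0$, there is some $r\in(0,\delta)$ with $\{\textbf{z}+re^{\sqrt{-1}\theta}\zeta:\theta\in[0,2\pi]\}\subset V$ and
$$
g(\textbf{z})\leqslant \frac{1}{2\pi}\int_{0}^{2\pi}g(\textbf{z}+re^{\sqrt{-1}\theta}\zeta)\,d\theta.
$$
This is precisely the criterion for plurisubharmonicity given in Proposition \ref{dchcthhshjbd23617}, so the semi-anti-PSH property of $f$ with parameter $C$ is equivalent to the plurisubharmonicity of $g$.

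Finally, since $g\in C^{2}(V)$, the equivalence (a)$\Leftrightarrow$(d) in Lemma \ref{ghqd236130} shows that $g$ is plurisubharmonic if and only if
$$
\sum_{i,j=1}^{n}\partial_{i}\overline{\partial_{j}}\,g(\textbf{z})\varsigma_{i}\overline{\varsigma_{j}}\geqslant 0,\qquad \forall\,(n,\textbf{z})\in\mathbb{N}\times V,\ (\varsigma_{1},\cdots,\varsigma_{n})\in\mathbb{C}^{n},
$$
which, via the identity $\partial_{i}\overline{\partial_{j}}\,g=C\delta_{ij}-\partial_{i}\overline{\partial_{j}}\,f$, is exactly the claimed bound. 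There is no serious obstacle here: the whole argument is a direct reduction to already-established infinite-dimensional analogues, the only mild point being the careful verification that the "for some $r\in(0,\delta)$" quantifier in Definition \ref{bthshdy23611} matches the one in Proposition \ref{dchcthhshjbd23617} (so that no appeal to the full mean-value inequality in Definition \ref{dchcthhshdy23615} is needed in the forward direction).
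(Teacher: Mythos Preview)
Your proposal is correct and follows essentially the same route as the paper: introduce the auxiliary function $g=C\|\cdot\|^{2}-f$, use the circle-average identity for $\|\textbf{z}+re^{\sqrt{-1}\theta}\zeta\|^{2}$ to translate Definition \ref{bthshdy23611} for $f$ into the plurisubharmonicity criterion of Proposition \ref{dchcthhshjbd23617} for $g$, and then invoke the equivalence (a)$\Leftrightarrow$(d) of Lemma \ref{ghqd236130}. The only cosmetic difference is that you package both directions as a single chain of equivalences, whereas the paper writes out the ``only if'' and ``if'' parts separately.
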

\begin{proof}
	{\bf The ``only if" part}. For $\textbf{z}\in V $, we put
\begin{equation}\label{240121e2}
g\left( \textbf{z}\right)  \triangleq -f\left( \textbf{z}\right)  +C\left| \left| \textbf{z}\right|  \right|^{2}.
\end{equation}
For each $\textbf{z}\in V, \zeta \in \ell^{2}$ and $\delta>0$, there is  $r\in (0,\delta)$ such that $\left\{ \textbf{z}+\rho e^{\sqrt{-1}\theta }\zeta :\rho \in \left[ 0,r\right]  ,\theta \in \left[ 0,2\pi \right]  \right\}$ $ \subset V$ and that
		\begin{equation}\label{lsh236134}
\begin{aligned}
			&\frac{1}{2\pi } \int^{2\pi }_{0} \left(g\left( \textbf{z}\right)  -g\left( \textbf{z}+re^{\sqrt{-1}\theta }\zeta \right)  \right)d\theta \\&=\frac{1}{2\pi } \int^{2\pi }_{0} \left( -f\left( \textbf{z}\right)  +C\left| \left| \textbf{z}\right|  \right|^{2}  +f\left( \textbf{z}+re^{\sqrt{-1}\theta }\zeta \right)  -C\left| \left| \textbf{z}+re^{\sqrt{-1}\theta }\zeta \right|  \right|^{2}  \right)  d\theta \\&=-\frac{1}{2\pi } \int^{2\pi }_{0} \left( f\left( \textbf{z}\right)  -f\left( \textbf{z}+re^{\sqrt{-1}\theta }\zeta \right)  \right)  d\theta +\frac{C}{2\pi } \int^{2\pi }_{0} \left( \left| \left| \textbf{z}\right|  \right|^{2}  -\left| \left| \textbf{z}+re^{\sqrt{-1}\theta }\zeta \right|  \right|^{2}  \right)  d\theta \\&=-\frac{1}{2\pi } \int^{2\pi }_{0} \left( f\left( \textbf{z}\right)  -f\left( \textbf{z}+re^{\sqrt{-1}\theta }\zeta \right)  \right)  d\theta -\frac{C}{2\pi } \int^{2\pi }_{0} r^{2}\left| \left| \zeta \right|  \right|^{2}  d\theta \\&\leqslant Cr^{2}\left| \left| \zeta \right|  \right|^{2}  -\frac{C}{2\pi } \int^{2\pi }_{0} r^{2}\left| \left| \zeta \right|  \right|^{2}  d\theta =0.
		\end{aligned}
	\end{equation}
Hence, by Proposition \ref{dchcthhshjbd23617}, $g$ is plurisubharmonic on $V$.
	
	For each $\left(n,\textbf{z}\right)\in \mathbb{N} \times V$ and $\left(\varsigma_{1},\cdots ,\varsigma_{n}\right)\in \mathbb{C}^n$,  by Lemma \ref{ghqd236130}, we have $\sum_{i,j=1}^n  \partial_{i} \overline{\partial_{j} } g\left( \textbf{z}\right)\varsigma_{i}\overline{\varsigma_{j}} \geqslant 0$, which means  that
 $$
		\begin{aligned}
			&\sum_{i,j=1}^n  \partial_{i} \overline{\partial_{j} } f\left( \textbf{z}\right)\varsigma_{i}\overline{\varsigma_{j}} =\sum_{i,j=1}^n  \partial_{i} \overline{\partial_{j} } \left( C\left| \left| \textbf{z}\right|  \right|^{2}  -g\left( \textbf{z}\right)  \right)\varsigma_{i}\overline{\varsigma_{j}}\\&=C\sum_{i,j=1}^n  \partial_{i} \overline{\partial_{j} } \left( \left| \left| \textbf{z}\right|  \right|^{2}  \right)  \varsigma_{i}\overline{\varsigma_{j}} -\sum_{i,j=1}^n  \partial_{i} \overline{\partial_{j} } g\left( \textbf{z}\right)\varsigma_{i}\overline{\varsigma_{j}} \leqslant C\sum^{n}_{i=1} \left|\varsigma_{i}\right|^{2}  .
		\end{aligned}
	$$

\medskip
	
	{\bf The ``if" part}.	
For the function $g $ given by (\ref{240121e2}), and each $\left(n,\textbf{z}\right)\in \mathbb{N} \times V$ and $\varsigma_{1},\cdots ,\varsigma_{n}\in\mathbb{C}$,  we have
 $$C\sum^{n}_{i=1} \left|\varsigma_{i}\right|^{2}  -\sum_{i,j=1}^n  \partial_{i} \overline{\partial_{j} } g\left( \textbf{z}\right)\varsigma_{i}\overline{\varsigma_{j}} =\sum_{i,j=1}^n  \partial_{i} \overline{\partial_{j} } f\left( \textbf{z}\right)\varsigma_{i}\overline{\varsigma_{j}} \leqslant C\sum^{n}_{i=1} \left|\varsigma_{i}\right|^{2}, $$ which means that $$\sum_{i,j=1}^n  \partial_{i} \overline{\partial_{j} } g\left( \textbf{z}\right)\varsigma_{i}\overline{\varsigma_{j}} \geqslant 0.$$
	By Lemma \ref{ghqd236130}, $g$ is plurisubharmonic on $V$.
	
	By Proposition \ref{dchcthhshjbd23617}, for each $\textbf{z}\in V, \zeta\in \ell^{2}$ and $\delta>0$, there exists $r\in (0,\delta)$ such that
 $$\frac{1}{2\pi } \int^{2\pi }_{0} \left(g\left( \textbf{z}\right)  -g\left( \textbf{z}+re^{\sqrt{-1}\theta }\zeta \right)  \right)d\theta \leqslant 0.
 $$
Proceeding as in \eqref{lsh236134}, one deduces that $$ \frac{1}{2\pi } \int^{2\pi }_{0} \left(f\left( \textbf{z}\right)  -f\left( \textbf{z}+re^{\sqrt{-1}\theta }\zeta \right)  \right)d\theta \geqslant -Cr^{2}\left| \left| \zeta \right|  \right|^{2}.
 $$
This completes the proof of Lemma \ref{bthsh236136}.
\end{proof}

In order to construct some useful semi-anti-plurisubharmonic functions on $V$, as in \cite[p. 30 and p. 347]{Prato}, we denote by $UC_{b}\left( \ell^{2} \right)$ the set of all complex-valued, bounded and uniformly continuous functions on $\ell^{2}$, and for any $f\in UC_{b}\left( \ell^{2} \right)  $, we set
 $$w_{f}\left( t\right)\triangleq \sup \left\{ \left| f\left( \textbf{z}\right)  -f\left(  \textbf{w} \right)  \right|  :\textbf{z}, \textbf{w} \in \ell^2\hbox{ with }\left| \left| \textbf{z}- \textbf{w}  \right|  \right|  \leqslant t\right\}  ,\q\forall\; t\geqslant 0.
  $$
  By \cite[Proposition, C.1.1, p. 347]{Prato}, $w_{f}\in C[0,+\infty )$, and for all $t\geqslant s\geqslant 0$,
  $$w_{f}\left( t\right)  \leqslant w_{f}\left( s\right)  ,\q w_{f}\left( t+s\right)  \leqslant w_{f}\left( t\right)  +w_{f}\left( s\right).$$
For any real-valued, bounded function $g$ on $\ell^2$, using the Lasry-Lions regularization technique developed in \cite{LaL86} (stimulated by the inf-sup-convolution formulas, or the Lax-Oleinik
formula for solutions to the classical Hamilton-Jacobi equations),  we set
 $$\left( {\cal U}_{t}g\right)  \left( \textbf{z}\right)  \triangleq \inf_{\zeta \in \ell^{2} } \left\{ g\left( \zeta \right)  +\frac{\left| \left| \textbf{z}-\zeta \right|  \right|^{2}  }{2t} \right\},\q\forall\;\left(\textbf{z},t\right)\in \ell^{2}\times (0,\infty).$$

The following result and its proof are some small modification of the related results from \cite{LaL86} and \cite[Appendix C, p. 347--351]{Prato}.
\begin{lemma}\label{pghcz236131}
	For any real-valued function $f\in UC_{b}\left( \ell^{2} \right)  $, it holds that
\begin{itemize}
		\item[$(a)$] ${\cal U}_{t}f\in UC_{b}\left( \ell^{2} \right)  $, and $\left({\cal U}_{t}f\right)(\cdot)-\frac{\left| \left| \cdot\right|  \right|^{2}  }{2t} $ is anti-plurisubharmonic on $\ell^{2}$;
		\item[$(b)$]
$$f-w_{f}\left( 2\sqrt{t\sup_{\ell^{2} } \left| f\right|  } \right)  \leqslant {\cal U}_{t}f\leqslant f;$$
		\item[$(c)$]
\begin{equation}\label{240121e1}\frac{\left| \left( {\cal U}_{t}f\right)  \left( \textbf{z}\right)  -\left( {\cal U}_{t}f\right)  \left( \zeta \right)  \right|  }{\left| \left| \textbf{z}-\zeta \right|  \right|  } \leqslant \frac{1}{2t} \left(4 \sqrt{t\sup_{\ell^{2} } \left| f\right|  } +\left| \left| \textbf{z}-\zeta \right|  \right|  \right),\q\forall\; \textbf{z},\zeta\in \ell^2\hbox{ with }\textbf{z}\neq \zeta   .
\end{equation}
	\end{itemize}
\end{lemma}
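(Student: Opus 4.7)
The plan is to prove (b), (c), (a) in this order, since (a) will draw on both the bound from (b) and the Lipschitz estimate from (c). Write $M \triangleq \sup_{\ell^{2}}|f|$ for brevity. For (b), the inequality $({\cal U}_{t}f)(\textbf{z}) \leqslant f(\textbf{z})$ is immediate by testing the defining infimum at $\zeta = \textbf{z}$. For the matching lower bound, I would observe that when $||\textbf{z}-\zeta|| > 2\sqrt{tM}$ one has $f(\zeta) + \frac{||\textbf{z}-\zeta||^{2}}{2t} > -M + 2M \geqslant f(\textbf{z}) \geqslant ({\cal U}_{t}f)(\textbf{z})$, so such $\zeta$ cannot affect the infimum, which is therefore determined by the ball $\{\zeta \in \ell^{2} : ||\textbf{z}-\zeta|| \leqslant 2\sqrt{tM}\}$. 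On this ball the uniform continuity of $f$ yields $f(\zeta) \geqslant f(\textbf{z}) - w_{f}(2\sqrt{tM})$, and taking the infimum gives the desired lower bound.

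For (c), given $\varepsilon > 0$, I would select an $\varepsilon$-almost minimizer $w^{\star} \in \ell^{2}$ for $({\cal U}_{t}f)(\zeta)$ and then test $w^{\star}$ in the infimum defining $({\cal U}_{t}f)(\textbf{z})$. Combining the identity $||\textbf{z}-w^{\star}||^{2} - ||\zeta-w^{\star}||^{2} = (||\textbf{z}-w^{\star}|| - ||\zeta-w^{\star}||)(||\textbf{z}-w^{\star}|| + ||\zeta-w^{\star}||)$ with the reverse triangle inequality produces $({\cal U}_{t}f)(\textbf{z}) - ({\cal U}_{t}f)(\zeta) \leqslant \frac{||\textbf{z}-\zeta||}{2t}(||\textbf{z}-w^{\star}|| + ||\zeta-w^{\star}||) + \varepsilon$. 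The a priori bound $||\zeta - w^{\star}|| \leqslant 2\sqrt{tM}$ follows by the same quadratic-versus-bounded comparison used in (b), now applied to the point $w^{\star}$; the triangle inequality then gives $||\textbf{z}-w^{\star}|| \leqslant 2\sqrt{tM} + ||\textbf{z}-\zeta||$. Letting $\varepsilon \to 0^{+}$ and exchanging the roles of $\textbf{z}$ and $\zeta$ yields \eqref{240121e1}.

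For (a), boundedness of ${\cal U}_{t}f$ is immediate from (b) together with the boundedness of $f$; uniform continuity follows by specializing (c) to pairs with $||\textbf{z}-\zeta|| \leqslant 1$, which provides the finite Lipschitz constant $\frac{4\sqrt{tM}+1}{2t}$ on such pairs and therefore a uniform modulus of continuity across all of $\ell^{2}$. For the anti-plurisubharmonicity claim, I would expand $||\textbf{z}-\zeta||^{2} = ||\textbf{z}||^{2} - 2\Re(\textbf{z},\zeta) + ||\zeta||^{2}$ inside the defining infimum, so that $u(\textbf{z}) \triangleq ({\cal U}_{t}f)(\textbf{z}) - \frac{||\textbf{z}||^{2}}{2t}$ becomes the pointwise infimum of the family $h_{\zeta}(\textbf{z}) \triangleq f(\zeta) + \frac{||\zeta||^{2}}{2t} - \frac{\Re(\textbf{z},\zeta)}{t}$. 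Since $\sum_{j} z_{j}\overline{\zeta_{j}}$ is holomorphic in $\textbf{z}$, each $h_{\zeta}(\cdot)$ is pluriharmonic and therefore satisfies the spherical mean value identity; integrating over $\theta$ the trivial pointwise bound $h_{\zeta}(\textbf{z}+e^{\sqrt{-1}\theta}\textbf{w}) \geqslant u(\textbf{z}+e^{\sqrt{-1}\theta}\textbf{w})$ and then taking the infimum over $\zeta$ on the left yields the reverse mean value inequality characterizing anti-plurisubharmonicity of $u$ in the sense of Definition \ref{dchcthhshdy23615}. The main subtlety I anticipate is in step (c): the infimum need not be attained in $\ell^{2}$, so one must work systematically with almost minimizers, but the quadratic-versus-bounded comparison that caps $||\zeta - w^{\star}||$ is manifestly dimension-free and so carries over unchanged to the infinite-dimensional setting.
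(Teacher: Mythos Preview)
Your proposal is correct and follows essentially the same route as the paper: the almost-minimizer argument for (c), the quadratic-versus-bounded localization for (b), and the representation of $({\cal U}_{t}f)(\cdot)-\frac{\|\cdot\|^{2}}{2t}$ as an infimum of affine (hence pluriharmonic) functions for the anti-plurisubharmonicity in (a) are all as in the paper. Two minor differences are worth flagging. First, for uniform continuity in (a) the paper proves the sharper inequality $w_{{\cal U}_{t}f}\leqslant w_{f}$ via a direct sup--inf computation, whereas you deduce uniform continuity from the Lipschitz estimate (c); your route is shorter and fully sufficient for the lemma as stated, but the paper's estimate says a bit more. Second, in (c) your bound on the almost-minimizer is really $\|\zeta-w^{\star}\|\leqslant\sqrt{4tM+2t\varepsilon}$ rather than $2\sqrt{tM}$; you implicitly absorb this when letting $\varepsilon\to0^{+}$, which is fine, but you should make that explicit in a full write-up.
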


\begin{proof}
	$(a)$	By the definition of ${\cal U}_{t}f$, it is clear that $\inf\limits_{\ell^{2} } f\leqslant {\cal U}_{t}f\leqslant f\leqslant \sup\limits_{\ell^{2} } f$. For each $\textbf{z} ,\textbf{w} \in \ell^{2}$ and $t>0 $, we have
		$$
			\begin{aligned}
		{\cal U}_{t}f \left( \textbf{z} \right)  -{\cal U}_{t}f \left( \textbf{w}\right)  &=\inf_{\xi \in \ell^{2} } \left\{ f\left( \xi \right)  +\frac{\left| \left| \textbf{z}-\xi \right|  \right|^{2}  }{2t} \right\}  -\inf_{\zeta \in \ell^{2} } \left\{ f\left( \zeta \right)  +\frac{\left| \left| \textbf{w}-\zeta \right|  \right|^{2}  }{2t} \right\}   \\&=\inf_{\xi \in \ell^{2} } \left\{ f\left( \xi \right)  +\frac{\left| \left| \textbf{z}-\xi \right|  \right|^{2}  }{2t} \right\}  +\sup_{\zeta \in \ell^{2} } \left\{ -f\left( \zeta \right)  -\frac{\left| \left| \textbf{w}-\zeta \right|  \right|^{2}  }{2t} \right\}   \\&=\sup_{\zeta \in \ell^{2} } \left\{ \inf_{\xi \in \ell^{2} } \left\{ f\left( \xi \right)  +\frac{\left| \left| \textbf{z}-\xi \right|  \right|^{2}  }{2t} \right\}  -f\left( \zeta \right)  -\frac{\left| \left| \textbf{w}-\zeta \right|  \right|^{2}  }{2t} \right\}   \\&=\sup_{\zeta \in \ell^{2} } \left\{ \inf_{\xi \in \ell^{2} } \left\{ f\left( \xi \right)  +\frac{\left| \left| \textbf{z}-\xi \right|  \right|^{2}  }{2t} -f\left( \zeta \right)  -\frac{\left| \left| \textbf{w}-\zeta \right|  \right|^{2}  }{2t} \right\}  \right\}  \\&=\sup_{\zeta \in \ell^{2} } \left\{ \inf_{\xi \in \ell^{2} } \left\{ f\left( \textbf{z}-\xi \right)  +\frac{\left| \left| \xi \right|  \right|^{2}  }{2t} -f\left( \textbf{w}-\zeta \right)  -\frac{\left| \left| \zeta \right|  \right|^{2}  }{2t} \right\}  \right\}  \\&=\sup_{\zeta \in \ell^{2} } \left\{ \inf_{\xi \in \ell^{2} } \left\{ f\left( \textbf{z}-\xi \right)  -f\left( \textbf{w}-\zeta \right)  -\frac{\left| \left| \zeta \right|  \right|^{2}  -\left| \left| \xi \right|  \right|^{2}  }{2t} \right\}  \right\}
   \\&\leqslant \sup_{\zeta \in \ell^{2} } \left\{ \inf_{\xi \in \ell^{2} } \left\{ \left| f\left( \textbf{z}-\xi \right)  -f\left( \textbf{w}-\zeta \right)  \right|  -\frac{\left| \left| \zeta \right|  \right|^{2}  -\left| \left| \xi \right|  \right|^{2}  }{2t} \right\}  \right\}   \\&\leqslant \sup_{\zeta \in \ell^{2} } \left\{ \inf_{\xi \in \ell^{2} } \left\{ w_{f}\left( \left| \left| \textbf{z}-\xi -\textbf{w}+\zeta \right|  \right|  \right)  -\frac{\left| \left| \zeta \right|  \right|^{2}  -\left| \left| \xi \right|  \right|^{2}  }{2t} \right\}  \right\}  \\&\leqslant \sup_{\zeta \in \ell^{2} } \left\{ \inf_{\xi \in \ell^{2} } \left\{ w_{f}\left( \left| \left| \textbf{z}-\textbf{w}\right|  \right|  \right)  +w_{f}\left( \left| \left| \xi -\zeta \right|  \right|  \right)  -\frac{\left| \left| \zeta \right|  \right|^{2}  -\left| \left| \xi \right|  \right|^{2}  }{2t} \right\}  \right\}  \\& \leqslant \sup_{\zeta \in \ell^{2} } \left\{ w_{f}\left( \left| \left| \textbf{z}-\textbf{w}\right|  \right|  \right)  \right\}  = w_{f}\left( \left| \left| \textbf{z}-\textbf{w}\right|  \right|  \right).
			\end{aligned}
		$$
Symmetrically, ${\cal U}_{t}f \left( \textbf{w}\right)  -{\cal U}_{t}f \left( \textbf{z}\right)  \leqslant  w_{f}\left( \left| \left| \textbf{z}-\textbf{w}\right|  \right|  \right)$. Hence $\left| {\cal U}_{t}f \left( \textbf{w}\right)  -{\cal U}_{t}f \left( \textbf{z}\right)  \right|  \leqslant  w_{f}\left( \left| \left| \textbf{z}-\textbf{w}\right|  \right|  \right)$, which indicates that ${\cal U}_{t}f\in UC_{b}\left( \ell^{2} \right)  $.
		
	On the other hand,	
 $$
			\begin{aligned}
				{\cal U}_{t}f \left( \textbf{z}\right)  -\frac{\left| \left| \textbf{z}\right|  \right|^{2}  }{2t} &=\inf_{\zeta \in \ell^{2} } \left\{ f \left( \zeta \right)  +\frac{\left| \left| \textbf{z}-\zeta \right|  \right|^{2}  }{2t} -\frac{\left| \left| \textbf{z}\right|  \right|^{2}  }{2t} \right\}   =\inf_{\zeta \in \ell^{2} } \left\{ f \left( \zeta \right)  -\frac{\Re \left( 2\textbf{z}-\zeta ,\zeta \right)  }{2t} \right\}   .
			\end{aligned}
		$$
		Therefore, for any $a,b,\xi\in \ell^{2}$, by
 $$
  \frac{1}{2t} \Re \left( 2a-\xi,\xi\right)  =\frac{1}{2\pi } \int^{2\pi }_{0} \frac{1}{2t} \Re \left( 2a+2be^{\sqrt{-1}\theta }-\xi,\xi\right)  d\theta ,$$ one has $$
			\begin{aligned}
				\inf_{\zeta \in \ell^{2} } \left\{f\left( \zeta \right)  -\frac{1}{2t} \Re \left( 2a-\zeta ,\zeta \right)  \right\}    &=\inf_{\zeta \in \ell^{2} } \left\{ f\left( \zeta \right)  -\frac{1}{2\pi } \int^{2\pi }_{0} \frac{1}{2t} \Re \left( 2a+2be^{\sqrt{-1}\theta }-\zeta ,\zeta \right)  d\theta \right\}  \\&=\inf_{\zeta \in \ell^{2} } \left\{ \frac{1}{2\pi } \int^{2\pi }_{0} [f\left( \zeta \right)  -\frac{1}{2t} \Re \left( 2a+2be^{\sqrt{-1}\theta }-\zeta ,\zeta \right)  ]d\theta \right\}   \\&\geqslant \frac{1}{2\pi } \int^{2\pi }_{0} \inf_{\zeta \in \ell^{2} } \left\{f\left( \zeta \right)  -\frac{1}{2t} \Re \left( 2a+2be^{\sqrt{-1}\theta }-\zeta ,\zeta \right) \right\}  d\theta  ,
			\end{aligned}
		$$
		which means that $\left({\cal U}_{t}f\right)(\cdot)-\frac{\left| \left| \cdot\right|  \right|^{2}  }{2t} $ is anti-plurisubharmonic on $\ell^{2}$.
	
\medskip
	
$(b)$ By the definition of ${\cal U}_{t}f$, for each $\textbf{z}\in\ell^{2}, \varepsilon>0$ and $t>0$, there exists $y_{\varepsilon }\in \ell^{2}$ such that $${\cal U}_{t}f \left( \textbf{z}\right)  +\varepsilon \geqslant f\left( y_{\varepsilon }\right)  +\frac{\left| \left| \textbf{z}-y_{\varepsilon }\right|  \right|^{2}  }{2t} ,$$hence $$
			\begin{aligned}
				0\leqslant f \left( \textbf{z}\right)  -{\cal U}_{t}f\left( \textbf{z}\right)  &\leqslant f\left( \textbf{z}\right)  -f \left( y_{\varepsilon}\right)  -\frac{\left| \left| \textbf{z}-y_{\varepsilon }\right|  \right|^{2}  }{2t} +\varepsilon \leqslant 2\sup_{\zeta \in \ell^{2}} \left| f \left( \zeta \right)  \right|  -\frac{\left| \left| \textbf{z}-y_{\varepsilon }\right|  \right|^{2}  }{2t} +\varepsilon  ,
			\end{aligned}
		$$
		which means that $$\left| \left| \textbf{z}-y_{\varepsilon }\right|  \right|  \leqslant \sqrt{4t\sup_{\zeta\in \ell^{2}} \left| f \left( \zeta\right)  \right|  +2t\varepsilon } .
 $$
Therefore,
 $$
			\begin{aligned}
				0\leqslant f \left( \textbf{z}\right)  -{\cal U}_{t}f \left( \textbf{z}\right)  &\leqslant f \left( \textbf{z}\right)  -f \left( y_{\varepsilon}\right)  -\frac{\left| \left| \textbf{z}-y_{\varepsilon }\right|  \right|^{2}  }{2t} +\varepsilon \\&\leqslant w_{f }\left( \left| \left| \textbf{z}-y_{\varepsilon }\right|  \right|  \right)  -\frac{\left| \left| \textbf{z}-y_{\varepsilon }\right|  \right|^{2}  }{2t} +\varepsilon\leqslant w_{f}\left( \sqrt{4t\sup_{\zeta \in \ell^{2}} \left| f \left( \zeta\right)  \right|  +2t\varepsilon } \right)  +\varepsilon  .
			\end{aligned}
		$$
		Let $\varepsilon\rightarrow 0^{+}$, we  get $$f-w_{f}\left( 2\sqrt{t\sup_{\ell^{2} } \left| f\right|  } \right)  \leqslant {\cal U}_{t}f\leqslant f.$$

\medskip

		$(c)$ By the definition of ${\cal U}_{t}f$, for each $\varepsilon,t>0$ and $\zeta\in \ell^{2}$, there exists $w_{\varepsilon }\in \ell^{2}$ such that $${\cal U}_{t}f \left(\zeta\right)  +\varepsilon \geqslant f \left( w_{\varepsilon }\right)  +\frac{\left| \left| \zeta-w_{\varepsilon }\right|  \right|^{2}  }{2t} ,$$
 hence,
 \begin{eqnarray}
			\frac{\left| \left| \zeta-w_{\varepsilon }\right|  \right|^{2}  }{2t} \leqslant 2\sup_{ \ell^{2}} \left| f  \right|  +\varepsilon ,\q {\cal U}_{t}f\left( \textbf{z}\right)  \leqslant f\left( w_{\varepsilon }\right)  +\frac{\left| \left| \textbf{z}-w_{\varepsilon }\right|  \right|^{2}  }{2t},\q\forall\;\textbf{z}\in \ell^{2} ,\label{lshxy236133}
		\end{eqnarray}
		which means that  $$
			\begin{aligned}
				{\cal U}_{t}f\left( \textbf{z}\right)  -{\cal U}_{t}f \left( \zeta\right)  &\leqslant \frac{\left| \left| \textbf{z}-w_{\varepsilon }\right|  \right|^{2}  }{2t} -\frac{\left| \left| \zeta-w_{\varepsilon }\right|  \right|^{2}  }{2t} +\varepsilon
\\&=\frac{\left| \left| \textbf{z}-\zeta\right|  \right|^{2}  +2\Re \left( \textbf{z}-\zeta,\zeta-w_{\varepsilon }\right)  }{2t} +\varepsilon \leqslant \frac{\left| \left| \textbf{z}-\zeta \right|  \right|^{2}  +2\left| \left| \zeta -\textbf{z}\right|  \right|  \cdot \left| \left| \zeta -w_{\varepsilon }\right|  \right|  }{2t} +\varepsilon \\&\leqslant \frac{\left| \left|\textbf{z}-\zeta\right|  \right|^{2}  +2\left| \left|\textbf{z}-\zeta\right|  \right|  \left(4t\sup\limits_{\ell^{2}} \left| f  \right|  +2t\varepsilon \right)^{1/2} }{2t} +\varepsilon,
		\end{aligned}$$
		where the last inequality follows from \eqref{lshxy236133}.
		Let $\varepsilon\rightarrow 0^{+}$ in the above, we have
 $${\cal U}_{t}f \left( \textbf{z}\right)  -{\cal U}_{t}f \left( \zeta\right)  \leqslant \frac{\left| \left| \textbf{z}-\zeta\right|  \right|^{2}  +4\left| \left| \textbf{z}-\zeta\right|  \right|  \left(t\sup\limits_{\ell^{2}} \left| f  \right|  \right)^{1/2} }{2t} ,$$Symmetrically, $${\cal U}_{t}f \left( \zeta\right)  -{\cal U}_{t}f \left( \textbf{z}\right)  \leqslant \frac{\left| \left| \textbf{z}-\zeta\right|  \right|^{2}  +4\left| \left| \textbf{z}-\zeta\right|  \right|  \left(t\sup\limits_{\ell^{2}} \left| f  \right|  \right)^{1/2} }{2t} .$$ Hence, we obtain (\ref{240121e1}).	
		This completes the proof of Lemma \ref{pghcz236131}.
\end{proof}

\begin{remark}\label{llxx236132}
The conclusion (c) of Lemma \ref{pghcz236131} indicates that ${\cal U}_{t}f\in \hbox{\rm Lip$\,$}\left(\ell^{2}\right)$.
\end{remark}

%

Now, we are able to give below a key result for overcoming the technical difficulty in infinite dimensional spaces (See \eqref{lsh112361}).
\begin{theorem}\label{shtqj23612}
	There is a semi-anti-plurisubharmonic function $\Psi$ on $V$ such that
   \begin{equation}\label{240121e6}
   \left\{\begin{array}{ll}
    \Psi(\textbf{z}) > 0,\q\forall\; \textbf{z}\in V,\\[3mm]
    \Psi\hbox{ is an exhaustion function on }V,\\[3mm]
    \sup\limits_{\textbf{z},\zeta \in S,\,\textbf{z}\neq \zeta } \left\{\Psi \left( \textbf{z}\right) +\frac{\left| \Psi \left( \textbf{z}\right)  -\Psi \left( \zeta \right)  \right|  }{\left| \left| \textbf{z}-\zeta \right|  \right|  } \right\}<\infty  ,\q\forall \;S \stackrel{\circ}{\subset} V.
	\end{array}\right.
\end{equation}
\end{theorem}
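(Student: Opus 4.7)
The plan is to construct $\Psi$ as an absolutely convergent series of Lasry--Lions regularizations of bounded Lipschitz ``ramp'' functions built from a Lipschitz exhaustion of $V$. Two observations drive everything: $(i)$ by Lemma \ref{pghcz236131}$(a)$, $\mathcal{U}_t f - \frac{\|\cdot\|^2}{2t}$ is anti-plurisubharmonic, hence $\mathcal{U}_t f$ is globally semi-anti-plurisubharmonic on $\ell^2$ with parameter $\frac{1}{2t}$ (for continuous $f$, the ``parameter $C$'' condition is equivalent to ``$-f + C\|\cdot\|^2$ is plurisubharmonic'', via averaging $\|\textbf{z} + r e^{\sqrt{-1}\theta}\zeta\|^2$ over $\theta$ and invoking Proposition \ref{dchcthhshjbd23617}); $(ii)$ when $f\geqslant 0$ vanishes on a set $A$, the infimum $\mathcal{U}_t f(\textbf{z}) = \inf_\zeta\{f(\zeta) + \frac{\|\textbf{z}-\zeta\|^2}{2t}\}$ is pinned to $0$ by $\zeta=\textbf{z}$ for any $\textbf{z}\in A$, so $\mathcal{U}_t f\equiv 0$ on $A$ as well.

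Concretely, take a Lipschitz exhaustion function $\eta\geqslant 1$ on $V$ from Lemma \ref{gx230614t1} (adding a constant if necessary). Pick Lipschitz cutoffs $\phi_k:\mathbb{R}\to[0,1]$ with $|\phi_k'|\leqslant 1$, $\phi_k = 0$ on $(-\infty,k]$ and $\phi_k=1$ on $[k+1,\infty)$, and set $h_k(\textbf{z}) \triangleq \phi_k(\eta(\textbf{z}))$ for $\textbf{z} \in V$, extended by $1$ on $\ell^2\setminus V$. Because $\eta\to+\infty$ near $\partial V$ the extension is continuous, and since $h_k$ is constant off the slab $\{k\leqslant\eta\leqslant k+1\}$, tracing along line segments and using the intermediate value theorem shows $h_k\in UC_b(\ell^2)$ with Lipschitz constant $L_k$ equal to that of $\eta$ on $V_{k+1}^o$. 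Now choose $t_k>0$ so small that Lemma \ref{pghcz236131}$(b)$ combined with $w_{h_k}(s)\leqslant L_k s$ yields $0\leqslant h_k-\mathcal{U}_{t_k}h_k\leqslant 2^{-k}$ on $\ell^2$. By $(i)$, $(ii)$ and Lemma \ref{pghcz236131}$(c)$, each $\mathcal{U}_{t_k}h_k$ is globally semi-anti-plurisubharmonic on $\ell^2$ with parameter $\frac{1}{2t_k}$, Lipschitz on bounded sets, and identically zero on $V_k$.

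Define $\Psi(\textbf{z})\triangleq 1 + \sum_{k=1}^{\infty}\mathcal{U}_{t_k}h_k(\textbf{z})$ for $\textbf{z}\in V$. For $\textbf{z}\in V_n$ one has $\textbf{z}\in V_k$ for all $k\geqslant n$, so $\mathcal{U}_{t_k}h_k(\textbf{z})=0$ and the series collapses to the finite sum $1+\sum_{k=1}^{n-1}\mathcal{U}_{t_k}h_k(\textbf{z})$; thus $\Psi\in C(V)$, $\Psi\geqslant 1>0$, and $\Psi$ is bounded on each $V_n$. Telescoping $\phi_k\circ\eta$ gives $\sum_k h_k=\eta-1$ for $\eta\geqslant 1$, whence $\Psi\geqslant\eta-1$ (absorbing the $\sum 2^{-k}$ error), so $\Psi$ is an exhaustion function. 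For any nonempty open $S\stackrel{\circ}{\subset}V$, Proposition \ref{jhbh23619}$(3)$ with Remark \ref{jhbh236110} applied to $\eta$ gives $n$ with $S\subset V_n^o$; on $V_n^o$ only the terms $k<n$ are nonzero, so $\Psi|_S$ is a finite sum of bounded Lipschitz, globally semi-anti-plurisubharmonic functions, hence bounded, Lipschitz, and globally semi-anti-plurisubharmonic on $S$ with parameter $\sum_{k=1}^{n-1}\frac{1}{2t_k}$ (by the equivalence in $(i)$ and Proposition \ref{dchcthhshzyxwdxzh23616}$(2)$).

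The main obstacle is exactly the dimension-free Hessian bound flagged in \eqref{lsh112361}: no single constant can control the second differential throughout $V$. The resolution is that we allow the semi-anti-plurisubharmonic parameter to depend on $S$, and we engineer the annihilation $\mathcal{U}_{t_k}h_k\equiv 0$ on $V_k$ so that only finitely many of the (small) $t_k$'s contribute on $S\subset V_n^o$; thus $\sum_{k<n}\frac{1}{2t_k}$ is finite even though the full series $\sum_k\frac{1}{2t_k}$ diverges.
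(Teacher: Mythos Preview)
Your proof is correct and is a genuine simplification of the paper's argument. Both proofs start from a Lipschitz exhaustion $\eta$ and use the Lasry--Lions regularization $\mathcal{U}_t$ as the source of semi-anti-plurisubharmonicity, and both achieve local finiteness of the defining series so that on each $S\stackrel{\circ}{\subset}V$ only finitely many terms survive. The difference lies in how that local finiteness is obtained. The paper applies $\mathcal{U}_{t_j}$ to the truncations $I_j\eta$ and then post-composes with a convex increasing $\psi$ vanishing on $(-\infty,0]$; this forces the $j$-th summand to vanish on the sublevel sets but requires a separate argument (via a second-order Taylor expansion and the Lipschitz bound from Lemma~\ref{pghcz236131}(c)) to check that $\psi\circ(\text{semi-anti-psh})$ remains globally semi-anti-plurisubharmonic and Lipschitz. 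You instead apply $\mathcal{U}_{t_k}$ to nonnegative ramp functions $h_k=\phi_k(\eta)$ that already vanish on $V_k$, and observe that the inf-convolution of a nonnegative function is pinned to $0$ wherever the function vanishes; this gives $\mathcal{U}_{t_k}h_k\equiv 0$ on $V_k$ for free and makes the post-composition with $\psi$ unnecessary. The paper's route is closer to the finite-dimensional H\"ormander template and reuses its cut-off machinery $I_\tau$, while your route is shorter and isolates more cleanly the single feature of $\mathcal{U}_t$ actually needed here. One minor point: the identity $\sum_k h_k=\eta-1$ requires the piecewise-linear choice $\phi_k(t)=\max(0,\min(1,t-k))$; with generic $\phi_k$ you only get $\sum_k h_k\geqslant\lfloor\eta\rfloor-1\geqslant\eta-2$, which is still ample for the exhaustion property.
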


\begin{proof}	
	If $V\neq \ell^{2}$, we let $\eta \left( \textbf{z}\right)  \triangleq \left| \left| \textbf{z}\right|  \right|^{2}_{\ell^{2} }  -\ln d\left( \textbf{z},\partial V\right)+c_0  $, where $c_0$ is a positive number so that $\eta \left( \textbf{z}\right)  >0$ for all $\textbf{z}\in V$. If $V= \ell^{2}$, we let $\eta \left( \textbf{z}\right)  \triangleq 1+\left| \left| \textbf{z}\right|  \right|^{2}_{\ell^{2} } $.
	
We put $V_t$ as that in (\ref{gx2401132}).	By the definition of $d\left( \textbf{z},\partial V\right)  $ and $\left| d\left( \textbf{z},\partial V\right)  -d\left( \zeta ,\partial V\right)  \right|  \leqslant \left| \left| \textbf{z}-\zeta\right|  \right|$ for each $\textbf{z},\zeta \in V$, we have $V_t \stackrel{\circ}{\subset} V$ for all $t\in \mathbb{R}$, and
		 $$\sup_{\textbf{z},\,\zeta \in S,\textbf{z}\neq \zeta } \left\{\eta \left( \textbf{z}\right)+\frac{\left| \eta \left( \textbf{z}\right)  -\eta \left( \zeta \right)  \right|  }{\left| \left| \textbf{z}-\zeta \right|  \right|  } \right\}<\infty  ,\q\forall\; S \stackrel{\circ}{\subset} V.$$

	For each $j\in \mathbb{N}$, we can find a constant $C(j)>0$ so that
$$\left| \eta \left( \textbf{z}\right)  -\eta \left( \zeta \right)  \right|  \leqslant C\left( j\right)  \left| \left| \textbf{z}-\zeta \right|  \right|  ,\q\left| \eta \left( \textbf{z}\right)  \right|  \leqslant C\left( j\right)  ,\q\forall \;\textbf{z},\zeta \in V_{j},$$
	and choose the function $I_{j}\left( \cdot\right)$ on $\ell^2$ as that in (\ref{lsh23611912e1}).
It is clear that $\supp (I_{j}\eta) \stackrel{\circ}{\subset} V$ and $I_{j}\eta \in UC_{b}\left( \ell^{2} \right)  $. 	By Lemma \ref{pghcz236131}, we have ${\cal U}_{t}\left( I_{j}\eta \right)  \in UC_{b}\left( \ell^{2} \right)  $ for each $t>0$ and  $$I_{j}\eta -w_{I_{j}\eta }\left( 2\sqrt{t\sup_{\ell^{2} } \left| I_{j}\eta \right|  } \right)  \leqslant {\cal U}_{t}\left( I_{j}\eta \right)  \leqslant I_{j}\eta .$$
 Since $w_{I_{j}\eta }(\cdot)$ is a continuous function,  we may find $t_{j}>0$ so that
 \begin{eqnarray}
		I_{j}\eta -1\leqslant {\cal U}_{t_{j}}\left( I_{j}\eta \right)  \leqslant I_{j}\eta \leqslant \eta.\label{lshshy236140}
	\end{eqnarray}
We choose the function $\psi$ as that in (\ref{240122e1}). For each $k\in \mathbb{N} $ and $\textbf{z}\in V$, we let
$$\Psi_{k} \left( \textbf{z}\right)  \triangleq \sum^{k}_{j=1} \frac{k}{\psi \left( 1\right)  } \psi \left( {\cal U}_{t_{j}}\left( I_{j}\eta \right)\left( \textbf{z}\right)    +3-j\right)  ,\q\Psi \left( \textbf{z}\right)  \triangleq \lim_{k\rightarrow \infty } \Psi_{k} \left( \textbf{z}\right).$$
	Since \eqref{lshshy236140}, \eqref{240122e1} and
 $${\cal U}_{t_{k+\ell }}\left( I_{k+\ell }\eta \right)  \left( \textbf{z}\right)  +3-k-\ell \leqslant \eta \left( \textbf{z}\right)  +3-k-\ell <3-\ell \leqslant 0,\q  \forall \;\textbf{z}\in V^{o}_{k},\;\ell \geqslant 3,
 $$
we have $\Psi_{k+\ell} \left( \textbf{z}\right)  =\Psi_{k+2} \left( \textbf{z}\right)  $ for all $\textbf{z}\in V^{o}_{k}$ and $\ell\geqslant 2$, and therefore $\Psi(\cdot)$ is well-defined.
	
	In what follows, we shall prove that the above $\Psi(\cdot)$ is the desired function.
	
	By \eqref{lshshy236140} and \eqref{240122e1}, for any $k\in \mathbb{N}$ and $\textbf{z}\in V_{k}^{o}$, we have $$\Psi \left( \textbf{z}\right)  =\Psi_{k+2} \left( \textbf{z}\right)  =\sum^{k+2}_{j=1} \frac{k}{\psi \left( 1\right)  } \psi \left({\cal U}_{t_{j}}\left( I_{j}\eta \right)  \left( \textbf{z}\right)  +3-j\right)  \leqslant \sum^{k+2}_{j=1} \frac{k}{\psi \left( 1\right)  } \psi \left( \sup_{V_{j+1}} \eta +3-j\right)  .$$
	For each $\textbf{z}\in V_{i}\setminus V_{i-1},i=1,2,\cdots,k$, since \eqref{lshshy236140} and $i-1<\eta \left( \textbf{z}\right)  \leqslant i$,  we have $$\Psi_{k} \left( \textbf{z}\right)  =\sum^{k}_{j=1} \frac{k}{\psi \left( 1\right)  } \psi \left({\cal U}_{t_{j}}\left( I_{j}\eta \right)  +3-j\right)  \geqslant \frac{i}{\psi \left( 1\right)  } \psi \left( \eta +2-i\right)  \geqslant i\geqslant \eta(\textbf{z}) .$$

	We  claim that $\psi \left({\cal U}_{t_{j}}\left( I_{j}\eta \right)  +3-j\right) ,j=1,2,\cdots $ is a globally semi-anti-plurisubharmonic and Lipschitz function on $V^{o}_{k}$ for each $k\in \mathbb{N}$.
	
Write
\begin{equation}\label{240122e7}
c_j^1=\sup \left\{ \psi^{\prime }(s) :\;2-j \leqslant s \leqslant\sup\limits_{V_{j+1}} \eta +3-j\right\},\q c^2_j=\sup\left\{\psi^{\prime \prime } \left( s \right):\; 2-j\leqslant s\leqslant \sup\limits_{V_{j+1}} \eta +3-j \right\}.
\end{equation}	
 Since
\begin{eqnarray}\label{lsh236135}
		-1\leqslant {\cal U}_{t_{j}}\left( I_{j}\eta \right)  \leqslant I_{j}\eta \leqslant \sup_{V_{j+1}} \eta,
	\end{eqnarray}
by the Lagrange mean value theorem, (\ref{240122e7}) and the conclusion (c) of Lemma \ref{pghcz236131}, for any $\textbf{z},\zeta\in V_{k}^{o}$, we have
 $$
		\begin{aligned}
			&\left| \psi \left({\cal U}_{t_{j}}\left( I_{j}\eta \right)  \left( \textbf{z}\right)  +3-j\right)  -\psi \left({\cal U}_{t_{j}}\left( I_{j}\eta \right)  \left( \zeta \right)  +3-j\right)  \right| \\&\leqslant c_j^1  \cdot \left| {\cal U}_{t_{j}}\left( I_{j}\eta \right)  \left( \textbf{z}\right)  -{\cal U}_{t_{j}}\left( I_{j}\eta \right)  \left( \zeta \right)  \right|  \leqslant c_j^1\cdot \left| \left| \textbf{z}-\zeta \right|  \right|  \cdot \frac{1}{2t_{j}} \cdot \left( 4\sqrt{t_{j}\sup_{\ell^{2} } \left| I_{j}\eta \right|  } +\left| \left| \textbf{z}-\zeta \right|  \right|  \right)  \\&\leqslant c_j^1\cdot \left| \left| \textbf{z}-\zeta \right|  \right|  \cdot \frac{1}{t_{j}} \cdot \left( 2\sqrt{t_{j}\sup_{\ell^{2} } \left| I_{j}\eta \right|  } +\sup_{\xi\in V^{o}_{k}} \left| \left| \xi\right|  \right|  \right)  ,\label{fz236144}
		\end{aligned}
	$$
	which means that $\psi \left({\cal U}_{t_{j}}\left( I_{j}\eta \right)  +3-j\right)$ is a globally Lipschitz function on $V^{o}_{k}$.
	
	
By the conclusion (a) in Lemma \ref{pghcz236131}, ${\cal U}_{t_{j}}\left( I_{j}\eta \right) $ is globally semi-anti-plurisubharmonic on $\ell^{2}$, therefore so is
 $$\Xi_{j}\triangleq {\cal U}_{t_{j}}\left( I_{j}\eta \right)  +3-j,\q\forall\;j\in \mathbb{N}.$$
 Hence, there is $C_{j}^{\prime}>0$ so that for each $k\in \mathbb{N}$, $\textbf{z}\in V_{k}^{o}$, $\zeta\in \ell^{2}$ and $\delta >0$, one can choose $r\in (0,\delta)$ such that  $\textbf{z}+\rho \zeta e^{\sqrt{-1}\theta}\in V^{o}_{k}$ for all $\rho \in \left[ 0,r\right]  $ and $\theta \in \left[ 0,2\pi \right]  $, and $$\frac{1}{2\pi } \int^{2\pi }_{0} \left( \Xi_{j} \left( \textbf{z}\right)  -\Xi_{j} \left( \textbf{z}+re^{\sqrt{-1}\theta }\zeta \right)  \right)  d\theta \geqslant -C^{\prime }_{j}r^{2}\left| \left| \zeta \right|  \right|^{2}  .
  $$
By the Taylor mean value theorem, (\ref{240122e7}) and \eqref{lsh236135}, it follows that
  $$
  \begin{aligned}
			&\psi \left(\Xi_{j} \left( \textbf{z}+re^{\sqrt{-1}\theta }\zeta \right)  \right)-\psi \left(\Xi_{j} \left( \textbf{z}\right)  \right)\\
  &\leqslant \psi^{\prime } \left(\Xi_{j} \left( \textbf{z}\right)  \right)  \left(\Xi_{j} \left( \textbf{z}+re^{\sqrt{-1}\theta }\zeta \right)  -\Xi_{j} \left( \textbf{z}\right)  \right)  +\frac{c^2_j }{2} \left(\Xi_{j} \left( \textbf{z}+re^{\sqrt{-1}\theta }\zeta \right)  -\Xi_{j} \left( \textbf{z}\right)  \right)^{2}.
   \end{aligned} $$
On the other hand, by the conclusion (c) in Lemma \ref{pghcz236131}, we have $$\left|\Xi_{j} \left( \textbf{z}\right)  -\Xi_{j} \left( \textbf{w}\right)  \right|  \leqslant C_{j,k}\left| \left| \textbf{z}-\textbf{w}\right|  \right|  ,\q\forall\; \textbf{z},\textbf{w}\in V^{o}_{k},$$where $C_{j,k}\triangleq \frac{1}{t_{j}} \cdot \left( 2\sqrt{t_{j}\sup\limits_{\ell^{2} } \left| I_{j}\eta \right|  } +\sup\limits_{\xi \in V^{o}_{k}} \left| \left| \xi \right|  \right|  \right) $.
 Hence, by (\ref{240122e7})	once more,
	$$
	\begin{aligned}
		&\frac{1}{2\pi } \int^{2\pi }_{0} \left( \psi \left(\Xi_{j} \left( \textbf{z}\right)  \right)  -\psi \left(\Xi_{j} \left( \textbf{z}+re^{\sqrt{-1}\theta }\zeta\right)  \right)  \right)  d\theta
\\&\geqslant \psi^{\prime } \left(\Xi_{j} \left( \textbf{z}\right)  \right) \frac{1}{2\pi } \int^{2\pi }_{0} \left(\Xi_{j} \left( \textbf{z}\right)  -\Xi_{j} \left( \textbf{z}+re^{\sqrt{-1}\theta }\zeta\right)  \right)  d\theta  -\frac{c^2_j}{4\pi } \int^{2\pi }_{0} \left(\Xi_{j} \left( \textbf{z}\right)  -\Xi_{j} \left( \textbf{z}+re^{\sqrt{-1}\theta }\zeta \right)  \right)^{2}  d\theta
 \\&\geqslant -c_j^1 \cdot C^{\prime }_{j}r^{2}\left| \left| \zeta \right|  \right|^{2}  -\frac{c^2_j}{4\pi } \int^{2\pi }_{0} \left|C_{j,k}\right|^2\left| \left| re^{\sqrt{-1}\theta }\zeta \right|  \right|^{2}  d\theta=-\left( c_j^1 \cdot C^{\prime }_{j}+\frac{c_j^2 }{2}\left|C_{j,k}\right|^2 \right)  r^{2}\left| \left| \zeta \right|  \right|^{2}.
	\end{aligned}
	$$
By Definition \ref{bthshdy23611}, this means that $\psi \left({\cal U}_{t_{j}}\left( I_{j}\eta \right)  +3-j\right)$ is globally semi-anti-plurisubharmonic on $V^{o}_{k}$ for all $k\in \mathbb{N}$.
	
	By $\Psi \left( \cdot\right)  \geqslant \eta \left( \cdot\right)$, it is clear that $\Psi(\cdot)>0$ and $\left\{ \textbf{z}\in V:\Psi \left( \textbf{z}\right)  \leqslant t\right\}  \stackrel{\circ}{\subset} V$ for all $t\in \mathbb{R}$.	For any $S\stackrel{\circ}{\subset} V$, there exists $k\in \mathbb{N}$ so that $S\stackrel{\circ}{\subset} V_{k}^{o}$ by Proposition \ref{jhbh23619}. Hence, $\Psi \left( \textbf{z}\right)  =\Psi_{k+2} \left( \textbf{z}\right)  $ for all $\textbf{z}\in V^{o}_{k}$, which means that $\Psi$ is a semi-anti-plurisubharmonic function on $V$, satisfying the last inequality in (\ref{240121e6}).
	This completes the proof of Theorem \ref{shtqj23612}.
\end{proof}

\section{Characterizing pseudo-convex domains by smooth plurisubharmonic functions}\label{section5}

First of all, by \cite[the equivalent condition (c) in Theorem 37.5, p. 274]{Mujica}, we have the following  characterization of pseudo-convex domains by Lipschitz plurisubharmonic functions.
\begin{proposition}\label{jntyddy236111}
Assume $V\not=\ell^2$. Then $V$ is a pseudo-convex domain  in $\ell^{2}$ if and only if the function $-\ln d\left( \cdot,\partial V\right)  $ is plurisubharmonic on $V$.
\end{proposition}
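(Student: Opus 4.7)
The statement follows most directly from \cite[the equivalent condition (c) in Theorem 37.5]{Mujica} once one checks that Definition \ref{xntyddy23618} agrees with Mujica's notion of pseudo-convexity. For a self-contained argument, one reduces to the classical finite-dimensional characterization (e.g., \cite[Theorem 2.6.7]{Hor90}) as follows.

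For the ``$\Leftarrow$'' direction, assume $-\ln d(\cdot,\partial V)$ is plurisubharmonic on $V$ and fix any finite-dimensional subspace $M$ of $\ell^{2}$. Consider $\eta_{M}(w)\triangleq -\ln d(w,\partial V)+\|w\|^{2}$ on $V\cap M$. By conclusion $(1)$ of Proposition \ref{dchcthhshzyxwdxzh23616}, the restriction of $-\ln d(\cdot,\partial V)$ to $V\cap M$ is plurisubharmonic, and $\|\cdot\|^{2}$ is plurisubharmonic by direct verification; conclusion $(2)$ of the same proposition then makes $\eta_{M}$ plurisubharmonic on $V\cap M$. For any $t\in\mathbb{R}$, the sublevel set $\{w\in V\cap M:\eta_{M}(w)\leqslant t\}$ satisfies $d(w,\partial V)\geqslant e^{-t}$ and $\|w\|^{2}\leqslant t+\ln(\|w\|+\|w_{0}\|)$ for any fixed $w_{0}\in\partial V$ (nonempty since $V\neq\ell^{2}$); it is therefore bounded in $M$, its closure in $M$ is compact, and the uniform lower bound on $d(\cdot,\partial V)$ confines that closure to $V\cap M$. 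So $\eta_{M}$ is a plurisubharmonic exhaustion function on $V\cap M$, making $V\cap M$ pseudo-convex in $M$ by \cite[Theorem 2.6.7]{Hor90}. Definition \ref{xntyddy23618} then gives pseudo-convexity of $V$ in $\ell^{2}$.

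For the ``$\Rightarrow$'' direction, for each unit vector $\xi\in\ell^{2}$ introduce the directional distance $d_{\xi}(w)\triangleq\sup\{r\geqslant 0:\;w+\lambda\xi\in V\text{ for every }\lambda\in\mathbb{C}\text{ with }|\lambda|\leqslant r\}$. A short argument using a nearly minimizing $w^{*}\in\partial V$ together with the radial direction from $w$ to $w^{*}$ yields $d(w,\partial V)=\inf_{\|\xi\|=1}d_{\xi}(w)$, so $-\ln d(w,\partial V)=\sup_{\|\xi\|=1}(-\ln d_{\xi}(w))$. Since $d(\cdot,\partial V)$ is $1$-Lipschitz and strictly positive on $V$, the envelope $-\ln d(\cdot,\partial V)$ is continuous; hence by interchanging supremum and the $\theta$-integral in Proposition \ref{dchcthhshjbd23617}, it suffices to show that $-\ln d_{\xi}$ is plurisubharmonic on $V$ for every fixed unit $\xi$. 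Fix $w\in V$ and a test direction $\zeta\in\ell^{2}$, and let $M$ be any finite-dimensional subspace of $\ell^{2}$ containing $w$, $\zeta$ and $\xi$. Since $w'+\lambda\xi\in M$ for every $w'\in M$, the function $d_{\xi}|_{V\cap M}$ coincides with the intrinsic directional distance of $V\cap M$ inside $M$. By assumption and Definition \ref{xntyddy23618}, $V\cap M$ is pseudo-convex in $M$, so \cite[Theorem 2.6.7]{Hor90} yields plurisubharmonicity of $-\ln d_{\xi}$ on $V\cap M$; in particular, the sub-mean-value inequality required by Proposition \ref{dchcthhshjbd23617} holds along the complex line through $w$ in the direction $\zeta$, as needed.

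The principal subtlety lies in the ``$\Rightarrow$'' direction: one must identify the $\ell^{2}$-directional distance with its intrinsic counterpart in a finite-dimensional section and invoke continuity (hence upper semicontinuity) of the supremum envelope so that the ``supremum of plurisubharmonic functions is plurisubharmonic'' mechanism applies. Once these two observations are in place, no genuinely infinite-dimensional difficulty arises, and the classical H\"ormander theorem carries the rest of the argument.
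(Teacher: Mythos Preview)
The paper does not actually prove this proposition: it simply cites \cite[Theorem 37.5, condition (c)]{Mujica}. Your opening sentence identifies precisely this reference, so on that score you match the paper exactly.

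Your additional self-contained sketch goes beyond what the paper does and is essentially the standard argument underlying Mujica's theorem. Two minor points are worth flagging. First, in the ``$\Rightarrow$'' direction the directional radius $d_{\xi}(w)$ may equal $+\infty$ (when the whole complex line through $w$ in direction $\xi$ lies in $V$), so $-\ln d_{\xi}$ can take the value $-\infty$ and does not literally fit Definition \ref{dchcthhshdy23615}, which asks for real-valued continuous functions; the supremum argument still works because you only need the sub-mean-value inequality for the envelope $-\ln d(\cdot,\partial V)$, which \emph{is} real-valued and continuous. Second, the plurisubharmonicity of $-\ln d_{\xi}$ on a finite-dimensional pseudo-convex slice is indeed in H\"ormander, but it is one of the equivalent conditions (or an intermediate step) in Theorem 2.6.7 rather than the headline statement about the Euclidean boundary distance; a more precise pointer there would tighten the citation. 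Neither point is a genuine gap.
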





Next, we have the following simple result.
\begin{lemma}\label{xntjnt236145}
If there exists a plurisubharmonic exhaustive function $\eta\in C^2\left( V\right)  $ on $V$, then $V$ is a pseudo-convex domain.
\end{lemma}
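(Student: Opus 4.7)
The plan is to reduce, via Definition~\ref{xntyddy23618}, to showing that $V\cap M$ is pseudo-convex in $M$ for every finite dimensional complex subspace $M\subset\ell^2$, and then invoke the classical finite dimensional characterization \cite[Theorem 2.6.7, p.~46]{Hor90}. Concretely, fix such an $M$, identified with $\mathbb{C}^{\dim M}$ via any basis, and set $\eta_M\triangleq\eta|_{V\cap M}$: it is $C^2$, and plurisubharmonic on the open set $V\cap M$ by Proposition~\ref{dchcthhshzyxwdxzh23616}$(1)$.

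The key step is to verify that $\eta_M$ is an exhaustion function on $V\cap M$ in the classical (relative compactness) sense, i.e., that $\{\zeta\in V\cap M:\eta_M(\zeta)<t\}$ has compact closure inside $V\cap M$ for every $t\in\mathbb{R}$. The hypothesis that $\eta$ is an exhaustion function on $V$ gives $V_t\triangleq\{z\in V:\eta(z)\leqslant t\}\stackrel{\circ}{\subset}V$, which by the Section~\ref{section2} conventions enforces two properties: $V_t$ is bounded in $\ell^2$, and (when $V\neq\ell^2$) $d(V_t,\partial V)>0$. Boundedness gives that $V_t\cap M$ is bounded in the finite dimensional space $M$, hence has compact closure there; positivity of $d(V_t,\partial V)$ (or the trivial case $V=\ell^2$) gives $\overline{V_t}\subset V$, so this closure is contained in $V\cap M$. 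Since $\{\zeta\in V\cap M:\eta_M(\zeta)<t\}\subset V_t\cap M$, it is relatively compact in $V\cap M$, and the classical characterization then yields pseudo-convexity of $V\cap M$. Because $M$ was arbitrary, Definition~\ref{xntyddy23618} concludes the proof.

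The only real obstacle is the bookkeeping in the second paragraph, namely translating the infinite dimensional notion of uniform inclusion ($\stackrel{\circ}{\subset}$) into the standard pair (boundedness of $V_t$, positive boundary separation) and thence into finite dimensional relative compactness of $V_t\cap M$. Once this translation is carried out, every remaining step is a formal application of Proposition~\ref{dchcthhshzyxwdxzh23616}$(1)$ and Hörmander's equivalence, so the argument is essentially a one-page verification.
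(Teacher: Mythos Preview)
Your proposal is correct and follows essentially the same route as the paper: restrict $\eta$ to $V\cap M$, invoke Proposition~\ref{dchcthhshzyxwdxzh23616}(1) for plurisubharmonicity, translate $V_t\stackrel{\circ}{\subset}V$ into relative compactness of $V_t\cap M$ in $M\cap V$, and conclude via Definition~\ref{xntyddy23618} and H\"ormander's finite dimensional characterization. The paper's proof is terser (it jumps directly from ``$V_t$ is uniformly included in $V$'' to ``$M\cap V_t^o$ is relatively compact in $M\cap V$''), whereas you spell out the intermediate step that uniform inclusion entails boundedness plus positive boundary distance; this extra detail is welcome but does not change the argument.
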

\begin{proof}
For each finite dimensional subspace $M\subset \ell^{2}$, by the conclusion $(1)$ of  Proposition  \ref{dchcthhshzyxwdxzh23616}, $\eta |_{M\cap V}\in C\left( M\cap V\right)  $ is a plurisubharmonic function on $M\cap V$. For each $t\in \mathbb{R}$, the set $V_{t}$ (defined in (\ref{gx2401132})) is uniformly included in $V$. Hence, the set
 $\left\{ \textbf{z}\in M\cap V:\eta |_{M\bigcap V}\left( \textbf{z}\right)  < t\right\}  =M\cap V_{t}^0 $ is relatively compact in $M\bigcap V$, and therefore $M\cap V$ is pseudo-convex in $M$. By Definition \ref{xntyddy23618}, $V$ is a pseudo-convex domain of $\ell^{2}$.	
	This completed the proof of Theorem \ref{xntjnt236145}.
\end{proof}

Conversely, we have the following result, which can be regarded as an infinite-dimensional version of \cite[Theorem 2.6.11, p. 48]{Hor90}.
\begin{theorem}\label{jntyshxnty23614}
If $V$ is a pseudo-convex domain of $\ell^{2}$, then there exists an exhaustion function $\eta  \in C^{\infty }_{F^{\infty }}\left( V\right) (\subset C^2\left( V\right) ) $ such that $\eta  $ is plurisubharmonic on $V$ .
\end{theorem}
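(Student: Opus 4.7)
The plan is to smooth a canonical continuous plurisubharmonic exhaustion of $V$ via a Hörmander-style convex-composition summation, with two adaptations for infinite dimensions: (i) the mollifier of Theorem \ref{pmgjsh236118} preserves plurisubharmonicity because the Gaussian $P$ is rotation-invariant (Lemma \ref{jbysdsh236115}) and $\vartheta$ is radial; (ii) the gluing of the local smoothings relies on the semi-anti-plurisubharmonic exhaustion $\Psi$ from Theorem \ref{shtqj23612}, which supplies the dimension-free Hessian control that is unavailable for generic smooth functions in $\ell^2$.

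\textbf{Step 1 (continuous PSH exhaustion).} Put $\varphi(\mathbf{z}) \triangleq -\ln d(\mathbf{z},\partial V) + \|\mathbf{z}\|^2 + c_0$ if $V \neq \ell^2$, and $\varphi(\mathbf{z}) \triangleq 1+\|\mathbf{z}\|^2$ if $V = \ell^2$, with $c_0 \geq 0$ chosen so $\varphi \geq 1$. By Proposition \ref{jntyddy236111}, pseudo-convexity of $V$ forces $-\ln d(\cdot,\partial V)$ to be plurisubharmonic; together with the plurisubharmonic $\|\cdot\|^2$, $\varphi$ is continuous plurisubharmonic. By Lemma \ref{gx230614t1} and Proposition \ref{jhbh23619}/Remark \ref{jhbh236110}, $\varphi$ is a continuous Lipschitz exhaustion with $V_t \triangleq \{\varphi \leq t\} \stackrel{\circ}{\subset} V$.

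\textbf{Step 2 (plurisubharmonicity-preserving mollification).} For each $j \in \mathbb{N}$, with $\vartheta, c$ from \eqref{lsh23611912}, \eqref{240122e9}, choose $\varepsilon_j > 0$ small enough that
\[
\varphi_j(\mathbf{z}) \triangleq c\int_{\ell^2} \varphi(\mathbf{z}-\varepsilon_j \zeta)\vartheta(\zeta)\,dP(\zeta)
\]
is defined on an open neighborhood of $V_{j+3}$, lies in $C^\infty_{F^\infty}$ locally by Theorem \ref{pmgjsh236118}, and satisfies $\varphi \leq \varphi_j \leq \varphi + 2^{-j-2}$. The upper bound uses local Lipschitzness. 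For the lower bound and the plurisubharmonicity of $\varphi_j$, Lemma \ref{jbysdsh236115} gives that $P$ is invariant under $\zeta \mapsto e^{\sqrt{-1}\theta}\zeta$; since $\vartheta$ is radial, Fubini yields
\[
\varphi_j(\mathbf{z}) = c\int_{\ell^2}\vartheta(\zeta)\Big(\tfrac{1}{2\pi}\int_0^{2\pi}\varphi(\mathbf{z}-\varepsilon_j e^{\sqrt{-1}\theta}\zeta)\,d\theta\Big)dP(\zeta) \geq \varphi(\mathbf{z})
\]
by Proposition \ref{dchcthhshjbd23617}, and the same averaging identity applied at arbitrary base points shows $\varphi_j$ is plurisubharmonic on its domain.

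\textbf{Step 3 (gluing with compensation).} The goal is now to assemble the $\varphi_j$'s into a global smooth PSH exhaustion $\eta \in C^\infty_{F^\infty}(V)$. Following the finite-dimensional template of \cite[Theorem 2.6.11]{Hor90}, $\eta$ is built as a convex-composition sum $\sum_j \chi_j(F_j)$, in which each $F_j$ is a smooth plurisubharmonic modification of $\varphi_j$ on $V_{j+3}^o$ and each $\chi_j$ is a smooth convex increasing function built from \eqref{240122e1}, so that (a) at each $\mathbf{z} \in V$ only finitely many terms are nonzero, (b) the sum dominates $\varphi$ (ensuring exhaustion), and (c) every $\chi_j(F_j)$ extends smoothly by $0$ to all of $V$. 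The modification $F_j$ of $\varphi_j$ is obtained by adding an appropriately small multiple of a mollification of the Lipschitz plurisubharmonic function $-\Psi + C_j \|\cdot\|^2$, where $\Psi$ is from Theorem \ref{shtqj23612} and $C_j$ is its semi-anti-plurisubharmonic parameter on $V_{j+3}$; this combination is plurisubharmonic (via Definition \ref{dchcthhshdy23615}, since the integral sub-mean-value slack of $-\Psi$ is exactly cancelled by the quadratic term: $\mathrm{avg}(-\Psi + C_j\|\cdot\|^2) - (-\Psi+C_j\|\cdot\|^2)(\mathbf{z}) \geq -C_j r^2\|\zeta\|^2 + C_j r^2 \|\zeta\|^2 = 0$). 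The final $\eta$ inherits plurisubharmonicity from each $\chi_j(F_j)$ (convex increasing composition of PSH) and its smoothness/$C^\infty_{F^\infty}$-regularity from local finiteness of the sum together with Theorem \ref{pmgjsh236118}.

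\textbf{Main obstacle.} The crux is Step 3: ensuring the compensating constants $C_j$ can be chosen \emph{uniformly across all finite-dimensional projections} of $V_{j+3}$, which is precisely the dimension-free Hessian bound \eqref{lsh112361} highlighted in the introduction. This bound fails for generic Lipschitz functions in $\ell^2$, but is exactly what the Lasry-Lions construction of $\Psi$ in Theorem \ref{shtqj23612} supplies. Once uniform $C_j$ is available, the convex-composition bookkeeping proceeds analogously to the finite-dimensional case, yielding the desired smooth plurisubharmonic exhaustion in $C^\infty_{F^\infty}(V)$.
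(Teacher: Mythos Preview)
Your overall strategy---mollify the canonical PSH exhaustion, then assemble via convex-composition sums---matches the paper's, and you correctly isolate the semi-anti-plurisubharmonic exhaustion $\Psi$ of Theorem~\ref{shtqj23612} as the key infinite-dimensional ingredient. But Step~3 misidentifies \emph{how} $\Psi$ is used, and this leaves a real gap. Your $\varphi_j$ (mollified without any cutoff) is defined only on an inner subdomain of $V$, so Theorem~\ref{pmgjsh236118}---which requires $f\in\hbox{\rm Lip}(\ell^2)$---does not apply as stated; more seriously, ``$\chi_j(F_j)$ extends smoothly by $0$ to all of $V$'' cannot be arranged: near the boundary of the domain of $F_j$ one has $F_j\approx\varphi\to+\infty$, not $F_j\to-\infty$, so $\chi_j(F_j)$ does not vanish there and has no smooth global extension. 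Adding a small mollified $-\Psi+C_j\|\cdot\|^2$ does not help: $\varphi_j$ is already PSH, and $\Psi$ is itself defined only on $V$, so the domain problem persists. Finally, since every term in your sum is already PSH, there is nothing to ``compensate''; the real obstruction is not a Hessian deficit but global definition.

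In the paper, $\Psi$ enters differently and essentially: the cutoff $I_{\lambda(j)+1}=\mathcal I_{\lambda(j)+1}(\Psi(\cdot))$ is placed \emph{inside} the integrand, so that $I_{\lambda(j)+1}\varrho\in\hbox{\rm Lip}(\ell^2)$ and the resulting $\varrho_j$ (plus $\varepsilon_j\|\cdot\|^2$) lies in $C^\infty_{F^\infty}(V)$ globally with compact support modulo the quadratic term. The price is that $\varrho_j$ is PSH only on the inner region $V^o_{\Psi,\lambda(j)}$; on the transition annulus it is not. It is precisely here that the semi-anti-PSH property of $\Psi$ does its work: because the cutoff is a function of $\Psi$, the second-order contribution of $I_{\lambda(j)+1}$ is controlled uniformly in dimension, yielding the finite lower bound \eqref{lsh236139}. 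Only then can H\"ormander-style compensation proceed: the multipliers $\alpha_{k+1}$ (not your $C_j$) are chosen inductively large so that the strict positivity $\varepsilon_{k+1}\psi'(1)$ coming from the $\varepsilon_{k+1}\|\cdot\|^2$ term dominates the accumulated Hessian deficit $s_k$ on the annulus. Your $C_j$ is just the semi-anti-PSH parameter of $\Psi$ supplied directly by Theorem~\ref{shtqj23612}; the genuine compensation constants are the $\alpha_j$'s of the paper's Step~4.
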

\begin{proof}
Without loss of generality, we consider only the case that $V\neq \ell^{2}$. We borrow some idea from the proof of \cite[Theorem 2.6.11, pp. 48--49]{Hor90}. Our proof is lengthy and therefore we divide it into several steps.

\medskip

	\textbf{Step 1}. 	Let
$$\varrho \left( \textbf{z}\right)  \triangleq \begin{cases}\left| \left| \textbf{z}\right|  \right|^{2}-\ln d\left( \textbf{z},\partial V\right)+c_0  ,&\textbf{z}\in V\\ 0,&\textbf{z}\notin V\end{cases}   ,$$
where $c_0$ is a positive number large enough so that $\varrho \left( \textbf{z}\right)  >0$ for all $\textbf{z}\in V$.
By the proof of Theorem \ref{shtqj23612}, there is a semi-anti-plurisubharmonic function $\Psi\geqslant \varrho$ on $V$, which satisfies (\ref{240121e6}).

For each $t>0$, recall the first paragraph in Section \ref{sec3} for the notations $V_{\Psi,t},V^{o}_{\Psi,t},  V_{\varrho ,t}$ and $V^{o}_{\varrho ,t}$.
By Proposition \ref{jhbh23619} and Remark \ref{jhbh236110}, it is clear that $V^{o}_{\Psi,t}\stackrel{\circ}{\subset} V^{o}_{\Psi,s}$ and $V^{o}_{\varrho,t}\stackrel{\circ}{\subset} V^{o}_{\varrho,s}$ for all $s>t>0$. Define a strictly increasing function $\lambda :\left( 0,+\infty \right)  \rightarrow \left( 0,+\infty
	\right)  $ by
$$
\lambda(t)\triangleq 1+\inf\left\{s\geqslant t:\;V^{o}_{\varrho,t}\subset V^{o}_{\Psi,s}\right\}.
$$
It is obvious that $\lambda \left( t\right) > t$ and $V_{\Psi,t}^{o}\subset V_{\varrho,t}^{o}\subset V_{\Psi,\lambda \left( t\right) }^{o}$ for all $t>0$.
	
For any $t>0$, there exists $C(t)>0$ so that for any $ \textbf{z},\textbf{w}\in V^{o}_{\varrho,t}$, it holds that
\begin{equation}\label{bdsh236146}
		\left\{\begin{aligned}
			&\left| \varrho \left( \textbf{z}\right)  -\varrho \left( \textbf{w}\right)  \right|  \leqslant C\left( t\right)  \left| \left| \textbf{z}-\textbf{w}\right|  \right|  ,\q\left| \varrho \left( \textbf{z}\right)  \right|  \leqslant C\left( t\right) ,\\&\left| \Psi \left( \textbf{z}\right)  -\Psi \left( \textbf{w}\right)  \right|  \leqslant C\left( t\right)  \left| \left| \textbf{z}-\textbf{w}\right|  \right|  ,\q\left| \Psi \left( \textbf{z}\right)  \right|  \leqslant C\left( t\right),
		\end{aligned}\right.
\end{equation}
 and for each $\textbf{z}\in V_{\Psi,t}^{o}, \zeta\in \ell^{2}$ and $\delta>0$, there is $r\in (0,\delta)$ so that $\textbf{z}+\rho e^{\sqrt{-1}\theta }\zeta \in V^{o}_{\Psi,t}$ for all $(\rho,\theta ) \in \left[ 0,r\right]  \times \left[ 0,2\pi \right]  $, and
 \begin{equation}\label{lsh236138}
		\frac{1}{2\pi } \int^{2\pi }_{0}\left(\Psi \left( \textbf{z}\right)  -\Psi \left( \textbf{z}+re^{\sqrt{-1}\theta }\zeta \right)\right)d\theta \geqslant -C\left( t\right)  \cdot r^{2}\left| \left| \zeta\right|  \right|^{2} .
	\end{equation}
	
	For each $j\in \mathbb{N}$, we choose $\varepsilon_{j} \in \left( 0,\inf\limits_{\textbf{z}\in V^{o}_{\Psi,\lambda(j)}} \frac{1}{1+\left| \left| \textbf{z}\right|  \right|^{2}   +C\left( \lambda \left( j\right)  +{1}/{2} \right)  } \right)  $ so that
	
	\begin{equation}\label{fw1236147}
		\textbf{z}-\varepsilon_{j} \zeta \in V^{o}_{\Psi,\lambda \left( j\right)  +{1}/{2} },\q  \forall\; \left(\textbf{z},\zeta\right)\in V^{o}_{\Psi,\lambda \left( j\right)  }\times \overline{B_1},
			\end{equation}
	\begin{equation}
		\quad \quad \quad  \textbf{z}-\varepsilon_{j} \zeta \notin V^{o}_{\Psi,\lambda \left( j\right)  +2},\q  \forall\; \left(\textbf{z},\zeta\right)\in \left(\ell^{2}\setminus V^{o}_{\Psi,\lambda \left( j\right)  +3}\right)\times \overline{B_1}, \label{fw1236148}
	\end{equation}
	\begin{equation}\label{fw1236149}
		\quad \textbf{z}-\varepsilon_{j} \zeta \in V^{o}_{\Psi,\lambda \left( j\right)  +5},\q  \forall\;\left( \textbf{z},\zeta\right)\in V^{o}_{\Psi,\lambda \left( j\right)  +4}\times\overline{B_1}.
	\end{equation}
	We set  (Recall (\ref{lsh23611912}) and (\ref{240122e9}) respectively for $\vartheta \left( \cdot\right)$ and $c$)
$$\varrho_{j} \left( \textbf{z}\right)  \triangleq c\int_{\ell^{2} } I_{\lambda \left( j\right)  +1}\left( \textbf{z}-\varepsilon_{j} \zeta \right)  \varrho \left( \textbf{z}-\varepsilon_{j} \zeta \right)  \vartheta \left( \zeta \right)  dP\left( \zeta \right) +\varepsilon_{j} \left| \left| \textbf{z}\right|  \right|^{2}   ,\q\forall\;\textbf{z}\in \ell^{2},
 $$
where $I_{\lambda \left( j\right)  +1}\left( \cdot\right)$ is given by (\ref{lsh23611912e1}) with $\tau$ and $\eta$ therein replaced by $\lambda \left( j\right)  +1$ and $\Psi$, respectively.
By \eqref{fw1236148}, we have $I_{\lambda \left( j\right)  +1}\left( \textbf{z}-\varepsilon_{j} \zeta \right)  \varrho \left( \textbf{z}-\varepsilon_{j} \zeta \right)  \vartheta \left( \zeta \right)  =0$ for all $\left(\textbf{z},\zeta \right)\in \left(\ell^{2}\setminus V^{o}_{\Psi,\lambda \left( j\right)  +3}\right)\times \ell^{2}$ , which means that
 \begin{equation}\label{fw1236149e1}
  \supp \left( \varrho_{j} (\cdot)-\varepsilon_{j} \left| \left|\cdot\right|  \right|^{2}\right)  \subset V_{\Psi,\lambda \left( j\right)  +3}.
   \end{equation}
 Similar to the proof of Theorem \ref{ghqjhshczx23613}, we can show that
  $$\sup_{\textbf{z},\textbf{w}\in \ell^{2} ,\textbf{z}\neq \textbf{w}} \frac{\left| I_{\lambda \left( j\right)  +1}\left( \textbf{z}\right)  \varrho \left( \textbf{z}\right)  -I_{\lambda \left( j\right)  +1}\left( \textbf{w}\right)  \varrho \left( \textbf{w}\right)  \right|  }{\left| \left| \textbf{z}-\textbf{w}\right|  \right|  } <\infty .$$Therefore, by Theorem \ref{pmgjsh236118} and $\varepsilon_{j} \left| \left|\cdot\right|  \right|^{2}  \in C^{\infty }_{F^{\infty}}\left( V\right)  $, we conclude that  $\varrho_{j} \in C^{\infty }_{F^{\infty}}\left( V\right)$.
	
\medskip

	\textbf{Step 2}.	
	In this step, we will prove that for each fixed $k\in\mathbb{N}$ and $ j \in \mathbb{N}$, it holds that \begin{eqnarray}
		\inf_{n\in \mathbb{N} ,\textbf{z}\in V^{o}_{\Psi,\lambda \left( k\right)  },\sum\limits^{n}_{i=1} \left| w_{i}\right|^{2}  =1,w_{1},w_{2},\cdots ,w_{n}\in \mathbb{C}} \sum_{i,\ell=1}^n  \partial_{i} \overline{\partial_{\ell } }  \varrho_{j} \left( \textbf{z}\right)  w_{i}\overline{w_{\ell }} >-\infty.\label{ejdsh236150}
	\end{eqnarray}
	
	By applying Lemma \ref{bthsh236136} to $-\varrho_{j}$, we see that, it suffices to find $C_{j,k}>0$ so that for each $\textbf{z}\in V^{o}_{\Psi,\lambda \left( k\right)  }, \zeta \in \ell^{2}$ and $\delta \in \left( 0,1\right)  $, there exists  $r\in (0,\delta)$ such that $\textbf{z}+\rho e^{\sqrt{-1}\theta } \zeta \in V^{o}_{\Psi,\lambda \left( k\right)  }$ for all $\rho \in \left[ 0,r\right] $ and $\theta \in \left[ 0,2\pi \right]  $, and that
 \begin{equation}\label{fw1236149e1e}\frac{1}{2\pi } \int^{2\pi }_{0} \left( \varrho_{j} \left( \textbf{z}\right)  -\varrho_{j} \left( \textbf{z}+re^{\sqrt{-1}\theta }\zeta \right)  \right) d\theta \leqslant C_{j,k}r^{2}  \left| \left| \zeta \right|  \right|^{2}.
 \end{equation}
	
	Noting (\ref{fw1236149e1}), we find that
  $$
  \begin{aligned}
			&\frac{1}{2\pi } \int^{2\pi }_{0} \left( \varrho_{j} \left( \textbf{z}\right)  -\varrho_{j} \left( \textbf{z}+re^{\sqrt{-1}\theta }\zeta \right)  \right)  d\theta=\frac{\varepsilon_{j}}{2\pi } \int^{2\pi }_{0} \left( \left| \left| \textbf{z}\right|  \right|^{2}  -\left| \left| \textbf{z}+re^{\sqrt{-1}\theta }\zeta \right|  \right|^{2}  \right)d\theta \\&=-\frac{\varepsilon_{j}}{2\pi } \int^{2\pi }_{0} \left(\left| r\right|^{2}  \left| \left| \zeta \right|  \right|^{2}  +2\Re\left( \textbf{z},re^{\sqrt{-1}\theta }\zeta \right)\right)d\theta=-\varepsilon_{j} \left| r\right|^{2}  \left| \left| \zeta \right|  \right|^{2}   \leqslant 0
		\end{aligned}
	$$
holds for each $\textbf{z}\in V^{o}_{\Psi,\lambda \left( k\right)  }\setminus V_{\Psi,\lambda \left( j\right)  +3},\zeta \in \ell^{2} $ and sufficiently small $r>0$ so that $\textbf{z}+\rho e^{\sqrt{-1}\theta }\zeta \in V^{o}_{\Psi,\lambda \left( k\right)  }\setminus V_{\Psi,\lambda \left( j\right)  +3}$ for all $\rho \in \left[ 0,r\right]$ and $\theta \in \left[ 0,2\pi \right]  $.
	
For each $\textbf{z}\in V^{o}_{\Psi,\lambda \left( k\right)  }\bigcap V_{\Psi,\lambda \left( j\right)  +3}, \textbf{w}\in \ell^{2}$ and $\delta>0$, by \eqref{fw1236149}, we have $\textbf{z}-\varepsilon_{j} \zeta\in V^{o}_{\Psi,\lambda \left( j\right)  +5}$ for all $\zeta\in \overline{B_1}$. Then  by $V^{o}_{\Psi,\lambda \left( j\right)  +5} \stackrel{\circ}{\subset} V^{o}_{\Psi,\lambda \left( j\right)  +6}$ and the inequality \eqref{lsh236138}, we may choose $r\in (0,\delta)$ so that $\textbf{z}-\varepsilon_{j} \zeta +\rho e^{\sqrt{-1}\theta }\textbf{w}\in V^{o}_{\Psi,\lambda \left( j\right)  +6}$ and $\textbf{z} +\rho e^{\sqrt{-1}\theta }\textbf{w}\in V^{o}_{\Psi,\lambda \left( k\right)  }$ for all $\rho \in \left[ 0,r\right]  ,\theta \in \left[ 0,2\pi \right] $ and $\zeta\in \overline{B_1}$, and
 $$\frac{1}{2\pi } \int^{2\pi }_{0} \left( \Psi \left( \textbf{z}+re^{\sqrt{-1}\theta }\textbf{w}-\varepsilon_{j} \zeta \right)  -\Psi \left( \textbf{z}-\varepsilon_{j} \zeta \right)  \right)  d\theta \leqslant C\left( \lambda \left( j\right)  +6\right)  r^{2}\left| \left| \textbf{w}\right|  \right|^{2}  ,\q\forall \;\zeta\in \overline{B_1}.$$
Since $V$ is  pseudo-convex, by Proposition \ref{jntyddy236111}, $-\ln d\left( \cdot,\partial V\right) $ is plurisubharmonic on $V$, hence so is $\varrho$ on $V$. By Proposition \ref{dchcthhshjbd23617}, we have
 $$
		\begin{aligned}
			&\frac{c}{2\pi } \int^{2\pi }_{0} \int_{\ell^{2} } I_{\lambda \left( j\right)  +1}\left( \textbf{z}-\varepsilon_{j} \zeta \right) \left(\varrho \left( \textbf{z}+re^{\sqrt{-1}\theta }\textbf{w}-\varepsilon_{j} \zeta \right)  -\varrho \left( \textbf{z}-\varepsilon_{j} \zeta \right)  \right)\vartheta \left( \zeta \right)  dP\left( \zeta \right)  d\theta \\&=c\int_{\ell^{2} } I_{\lambda \left( j\right)  +1}\left( \textbf{z}-\varepsilon_{j} \zeta \right)  \cdot\frac{1}{2\pi } \int^{2\pi }_{0} \left(\varrho \left( \textbf{z}+re^{\sqrt{-1}\theta }\textbf{w}-\varepsilon_{j} \zeta \right)  -\varrho \left( \textbf{z}-\varepsilon_{j} \zeta \right)  \right)d\theta \cdot\vartheta \left( \zeta \right)  dP\left( \zeta \right)  \geqslant 0.
		\end{aligned}
	$$
	Therefore, by the Taylor mean value theorem, \eqref{bdsh236146} and $ -K_0\leqslant \mathcal{I}^{\prime }_{\lambda \left( j\right)  +1} \leqslant 0$ (from \eqref{shyhshdxzh236114}),
	we have
 $$
		\begin{aligned}
			&\frac{c}{2\pi } \int^{2\pi }_{0} \int_{\ell^{2} } \left( (I_{\lambda \left( j\right)  +1}\varrho )\left( \textbf{z}+re^{\sqrt{-1}\theta }\textbf{w}-\varepsilon_{j} \zeta \right)  -(I_{\lambda \left( j\right)  +1}\varrho )\left( \textbf{z}-\varepsilon_{j} \zeta \right)  \right) \vartheta \left( \zeta \right)  dP\left( \zeta \right)  d\theta  \\&  =\frac{c}{2\pi } \int^{2\pi }_{0} \int_{\ell^{2} } \left( I_{\lambda \left( j\right)  +1}\left( \textbf{z}+re^{\sqrt{-1}\theta }\textbf{w}-\varepsilon_{j} \zeta \right)  -I_{\lambda \left( j\right)  +1}\left( \textbf{z}-\varepsilon_{j} \zeta \right)  \right) \\&\qq\qq\qq\times\varrho \left( \textbf{z}+re^{\sqrt{-1}\theta }\textbf{w}-\varepsilon_{j} \zeta \right)  \vartheta \left( \zeta \right)  dP\left( \zeta \right)  d\theta \\& \quad+\frac{c}{2\pi } \int^{2\pi }_{0} \int_{\ell^{2} } I_{\lambda \left( j\right)  +1}\left( \textbf{z}-\varepsilon_{j} \zeta \right)\left(\varrho \left( \textbf{z}+re^{\sqrt{-1}\theta }\textbf{w}-\varepsilon_{j} \zeta \right)  -\varrho \left( \textbf{z}-\varepsilon_{j} \zeta \right)\right)\vartheta \left( \zeta \right)  dP\left( \zeta \right)  d\theta \\&\geqslant \frac{c}{2\pi } \int^{2\pi }_{0} \int_{\ell^{2} } \left( I_{\lambda \left( j\right)  +1}\left( \textbf{z}+re^{\sqrt{-1}\theta }\textbf{w}-\varepsilon_{j} \zeta \right)  -I_{\lambda \left( j\right)  +1}\left( \textbf{z}-\varepsilon_{j} \zeta \right)  \right) \\&\qq\qq\qq\times \varrho \left( \textbf{z}+re^{\sqrt{-1}\theta }\textbf{w}-\varepsilon_{j} \zeta \right)  \vartheta \left( \zeta \right)  dP\left( \zeta \right)  d\theta\\&= \frac{c}{2\pi } \int^{2\pi }_{0} \int_{\ell^{2} } \left( I_{\lambda \left( j\right)  +1}\left( \textbf{z}+re^{\sqrt{-1}\theta }\textbf{w}-\varepsilon_{j} \zeta \right)  -I_{\lambda \left( j\right)  +1}\left( \textbf{z}-\varepsilon_{j} \zeta \right)  \right)  \varrho \left( \textbf{z}-\varepsilon_{j} \zeta \right)  \vartheta \left( \zeta \right)  dP\left( \zeta \right)  d\theta \\&\quad+ \frac{c}{2\pi } \int^{2\pi }_{0} \int_{\ell^{2} } \left( I_{\lambda \left( j\right)  +1}\left( \textbf{z}+re^{\sqrt{-1}\theta }\textbf{w}-\varepsilon_{j} \zeta \right)  -I_{\lambda \left( j\right)  +1}\left( \textbf{z}-\varepsilon_{j} \zeta \right)  \right)\\
&\qq\qq\qq\q\cdot\left(\varrho \left( \textbf{z}+re^{\sqrt{-1}\theta }\textbf{w}-\varepsilon_{j} \zeta \right)  -\varrho \left( \textbf{z}-\varepsilon_{j} \zeta\right)\right)\vartheta \left( \zeta \right)  dP\left( \zeta \right)  d\theta\\
			&\geqslant \frac{c}{2\pi } \int_{\ell^{2} } \mathcal{I}^{\prime }_{\lambda \left( j\right)  +1} \left( \Psi \left( \textbf{z}-\varepsilon_{j} \zeta \right)  \right)  \\&\qq\qq\qq\times\int^{2\pi }_{0} \left( \Psi \left( \textbf{z}+re^{\sqrt{-1}\theta }\textbf{w}-\varepsilon_{j} \zeta \right)  -\Psi \left( \textbf{z}-\varepsilon_{j} \zeta \right)  \right)  d\theta\varrho \left( \textbf{z}-\varepsilon_{j} \zeta \right)  \vartheta \left( \zeta \right)  dP\left( \zeta \right)   \\&\quad-\frac{c\sup\limits_{\mathbb{R} } \left| \mathcal{I}^{\prime \prime }_{\lambda \left( j\right)  +1} \right|  }{4\pi } \int^{2\pi }_{0} \int_{\ell^{2} } \left( \Psi \left( \textbf{z}+re^{\sqrt{-1}\theta }\textbf{w}-\varepsilon_{j} \zeta \right)  -\Psi \left( \textbf{z}-\varepsilon_{j} \zeta \right)  \right)^{2}  \varrho \left( \textbf{z}-\varepsilon_{j} \zeta \right)  \vartheta \left( \zeta \right)  dP\left( \zeta \right)  d\theta  \\&\quad-\frac{c}{2\pi } \int^{2\pi }_{0} \int_{\ell^{2} } \left| I_{\lambda \left( j\right)  +1}\left( \textbf{z}+re^{\sqrt{-1}\theta }\textbf{w}-\varepsilon_{j} \zeta \right)  -I_{\lambda \left( j\right)  +1}\left( \textbf{z}-\varepsilon_{j} \zeta \right)  \right| \\
&\qq\qq\qq\q \cdot \left| \varrho \left( \textbf{z}+re^{\sqrt{-1}\theta }\textbf{w}-\varepsilon_{j} \zeta \right)  -\varrho \left( \textbf{z}-\varepsilon_{j} \zeta\right)  \right|  \vartheta \left( \zeta \right)  dP\left( \zeta \right)  d\theta
	\\&\geqslant -cK_0C\left( \lambda \left( j\right)  +6\right)  \sup_{V^{o}_{\Psi,\lambda \left( j\right)  +6}} \left| \varrho \right|  \int_{\ell^{2} }r^{2}  \left| \left| \textbf{w}\right|  \right|^{2}  \vartheta \left( \zeta \right)  dP\left( \zeta \right)
 \\& \quad-\frac{c\sup\limits_{\mathbb{R} } \left| \mathcal{I}^{\prime \prime }_{\lambda \left( j\right)  +1} \right|  \sup\limits_{V^{o}_{\Psi,\lambda \left( j\right)  +6}} \left| \varrho \right|  \left|C\left( \lambda \left( j\right)  +6\right) \right|^{2} }{4\pi } \int^{2\pi }_{0} \int_{\ell^{2} } r^{2}  \left| \left| \textbf{w}\right|  \right|^{2}  \vartheta \left( \zeta \right)  dP\left( \zeta \right)  d\theta
  \\&\quad-\frac{cK_0\left|C\left( \lambda \left( j\right)  +6\right) \right|^{2} }{2\pi } \int^{2\pi }_{0} \int_{\ell^{2} }r^{2}  \left| \left| \textbf{w}\right|  \right|^{2}  \vartheta \left( \zeta \right)  dP\left( \zeta \right)  d\theta
 \\&=\Big( -K_0C\left( \lambda \left( j\right)  +6\right)  \sup_{V^{o}_{\Psi,\lambda \left( j\right)  +6}} \left| \varrho \right|  -\frac{1}{2} \sup\limits_{\mathbb{R} } \left| \mathcal{I}^{\prime \prime }_{\lambda \left( j\right)  +1} \right|  \sup\limits_{V^{o}_{\Psi,\lambda \left( j\right)  +6}} \left| \varrho \right| \left|C\left( \lambda \left( j\right)  +6\right) \right|^{2} \\&\qq -K_0\left|C\left( \lambda \left( j\right)  +6\right)\right|^{2} \Big) r^{2}  \left| \left| \textbf{w}\right|  \right|^{2} .
	\end{aligned}$$
	Therefore, the desired inequality (\ref{fw1236149e1e}) holds,
	which means that \eqref{ejdsh236150} holds  for all $k\in \mathbb{N}$ and $j \in \mathbb{N}$.
	
	By Lemma \ref{bthsh236136}, more specifically, for any $k\in \mathbb{N}$ and $j \in \mathbb{N}$, we have
 \begin{eqnarray}\label{lsh236139}
		\inf_{n\in \mathbb{N} ,\textbf{z}\in V^{o}_{\Psi,\lambda \left( k\right)  },\sum\limits^{n}_{i=1} \left| w_{i}\right|^{2}  =1,w_{1},w_{2},\cdots ,w_{n}\in \mathbb{C} } \sum_{i,\ell=1}^n  \partial_{i} \overline{\partial_{\ell } } \varrho_{j} \left( \textbf{z}\right)  w_{i}\overline{w_{\ell }} \geqslant -C_{j,k}.
	\end{eqnarray}

\medskip
	
	\textbf{Step 3}.
	In this step, we will prove that $\varrho_{j} (\cdot)-\varepsilon_{j} \left| \left| \cdot\right|  \right|^{2}  $ is plurisubharmonic on $V^{o}_{\Psi,\lambda \left( j\right)  }$ for each $j\in \mathbb{N}$ and
 \begin{eqnarray}\label{qjhsh236151}
		\varrho \left( \textbf{z}\right)  \leqslant \varrho_{j} \left( \textbf{z}\right)  \leqslant \varrho \left( \textbf{z}\right)  +1,\q\forall\; \textbf{z}\in V^{o}_{\Psi,\lambda \left( j\right)  }.
	\end{eqnarray}
	
	For each $j\in \mathbb{N}$, $a\in V^{o}_{\Psi,\lambda \left( j\right)  }$ and $ b\in \ell^{2}$, noting that $\varrho$  is plurisubharmonic  on $V$,  by Proposition \ref{dchcthhshjbd23617} and  \eqref{fw1236147}, we have $I_{\lambda \left( j\right)  +1}\left( a-\varepsilon_{j} \zeta \right)  \varrho \left( a-\varepsilon_{j} \zeta \right)  \vartheta \left( \zeta \right)  =\varrho \left( a-\varepsilon_{j} \zeta \right)  \vartheta \left( \zeta \right) $, and that
	$$
		\begin{aligned}
			&c\int_{\ell^{2} } I_{\lambda \left( j\right)  +1}\left( a-\varepsilon_{j} \zeta \right)  \varrho \left( a-\varepsilon_{j} \zeta \right)  \vartheta \left( \zeta \right)  dP\left( \zeta \right)  =c\int_{\ell^{2} } \varrho \left( a-\varepsilon_{j} \zeta \right)  \vartheta \left( \zeta \right)  dP\left( \zeta \right)   \\&\leqslant \frac{c}{2\pi } \int_{\ell^{2} } \int^{2\pi }_{0} \varrho \left( a+re^{\sqrt{-1}\theta }b-\varepsilon_{j} \zeta \right)  d\theta \cdot \vartheta \left( \zeta \right)  dP\left( \zeta \right)   \\&=\frac{c}{2\pi } \int^{2\pi }_{0} \int_{\ell^{2} } \varrho \left( a+re^{\sqrt{-1}\theta }b-\varepsilon_{j} \zeta \right)  \vartheta \left( \zeta \right)  dP\left( \zeta \right)  d\theta  \\&=\frac{c}{2\pi } \int^{2\pi }_{0} \int_{\ell^{2} } I_{\lambda \left( j\right)  +1}\left( a+re^{\sqrt{-1}\theta }b-\varepsilon_{j} \zeta \right)  \varrho \left( a+re^{\sqrt{-1}\theta }b-\varepsilon_{j} \zeta \right)  \vartheta \left( \zeta \right)  dP\left( \zeta \right)  d\theta,
		\end{aligned}
	$$
	where $r>0$ is sufficiently small so that $a+\rho e^{\sqrt{-1}\theta }b-\varepsilon_{j} \zeta \in V^{o}_{\Psi,\lambda \left( j\right) +1 }$ for all $\zeta \in \overline{B_1}$, $\rho \in \left[ 0,r\right]$ and $\theta \in \left[ 0,2\pi \right]  $. Hence,  $\varrho_{j} (\cdot)-\varepsilon_{j} \left| \left| \cdot\right|  \right|^{2}  $ is plurisubharmonic  on $V^{o}_{\Psi,\lambda \left( j\right)  }$.

	For each $a\in V^{o}_{\Psi,\lambda \left( j\right)  }$, by \eqref{fw1236147}, we have $I_{\lambda \left( j\right)  +1}\left( a-\varepsilon_{j} \zeta \right)  =1$ for all $\zeta \in \overline{B_1}$. Hence
	$$
		\begin{aligned}
			\varrho_{j} \left( a\right)  &=\frac{c}{2\pi } \int^{2\pi }_{0} \int_{\overline{B_1}} I_{\lambda \left( j\right)  +1}\left( a-\varepsilon_{j} \zeta \right)  \varrho \left( a-\varepsilon_{j} \zeta \right)  \vartheta \left( \zeta \right)  dP\left( \zeta \right)  d\theta +\varepsilon_{j} \left| \left| a\right|  \right|^{2}  \\& \geqslant\frac{c}{2\pi } \int^{2\pi }_{0} \int_{\overline{B_1}} I_{\lambda \left( j\right)  +1}\left( a-\varepsilon_{j} \zeta \right)  \varrho \left( a-\varepsilon_{j} \zeta \right)  \vartheta \left( \zeta \right)  dP\left( \zeta \right)  d\theta \\&=\frac{c}{2\pi } \int^{2\pi }_{0} \int_{\overline{B_1}} \varrho \left( a-\varepsilon_{j} \zeta \right)  \vartheta \left( \zeta \right)  dP\left( \zeta \right)  d\theta =\frac{c}{2\pi } \int_{\overline{B_1}} \int^{2\pi }_{0} \varrho \left( a-\varepsilon_{j} e^{\sqrt{-1}\theta }\zeta \right)  d\theta \cdot \vartheta \left( \zeta \right)  dP\left( \zeta \right)   ,
		\end{aligned}
	$$
	where the last equality follows from Lemma \ref{jbysdsh236115}. By $\varrho \left( a+\lambda \left( -e^{\sqrt{-1}\theta }\zeta \right)  \right)  $ is subharmonic  with respect to $\lambda\in \mathbb{C}$ and using \cite[Theorem 17.5, p. 337]{Rud87} , we see that
	$\frac{1}{2\pi } \int^{2\pi }_{0} \varrho \left( a+\lambda \left( -e^{\sqrt{-1}\theta }\zeta \right)  \right)  d\theta $ is increasing with respect to $\lambda \in [ 0,\varepsilon_{j} ]  $. Hence
 $$
		\begin{aligned}
			&\frac{c}{2\pi } \int_{\overline{B_1}} \int^{2\pi }_{0} \varrho \left( a-\varepsilon_{j} e^{\sqrt{-1}\theta }\zeta \right)  d\theta\cdot\vartheta \left( \zeta \right)   dP\left( \zeta \right) \geqslant \frac{c}{2\pi } \int_{\overline{B_1}} \lim_{\varepsilon \rightarrow 0^{+}} \int^{2\pi }_{0} \varrho \left( a-\varepsilon e^{\sqrt{-1}\theta }\zeta \right)  d\theta \cdot \vartheta \left( \zeta \right)  dP\left( \zeta \right) \\&\geqslant \frac{c}{2\pi } \int_{\overline{B_1}} \int^{2\pi }_{0} \varliminf_{\varepsilon \rightarrow 0^{+}} \varrho \left( a-\varepsilon e^{\sqrt{-1}\theta }\zeta \right)  d\theta \cdot \vartheta \left( \zeta \right)  dP\left( \zeta \right)= c\varrho \left( a\right)  \int_{\overline{B_1}} \vartheta \left( \zeta \right)  dP\left( \zeta \right)  =\varrho (a),
		\end{aligned}
	$$
	where the last inequality follows from the Fatou lemma. Hence $\varrho_{j} \left( a\right) \geqslant\varrho (a)$.
	
	By \eqref{fw1236147}, for each $ \zeta\in \overline{B_1}$ and $\textbf{z}\in V^{o}_{\Psi,\lambda \left( j\right)  }$, it follows that
    $$
		\varrho \left( \textbf{z}-\varepsilon_{j} \zeta \right)  \leqslant \left| \varrho \left( \textbf{z}-\varepsilon_{j} \zeta \right)  -\varrho \left( \textbf{z}\right)  \right|  +\varrho \left( \textbf{z}\right)  \leqslant C\left(\lambda \left( j\right)  +{1}/{2} \right)  \varepsilon_{j} +\varrho \left( \textbf{z}\right)   .
	$$
	Combining the above with $\varepsilon_{j} \in \left( 0,\inf\limits_{\textbf{z}\in V^{o}_{\Psi,\lambda(j)}} \frac{1}{1+\left| \left| \textbf{z}\right|  \right|^{2} +C\left( \lambda \left( j\right)  +{1}/{2} \right)  } \right)  $, we have
	$$
\varrho_{j} \left( \textbf{z}\right)  \leqslant c\int_{\overline{B_1}} \left( C\left( \lambda \left( j\right)  +{1}/{2} \right)  \varepsilon_{j} +\varrho \left( \textbf{z}\right)  \right)  \vartheta \left( \zeta \right)  dP\left( \zeta \right)  +\varepsilon_{j} \left| \left| \textbf{z}\right|  \right|^{2}\leqslant 1+\varrho \left( \textbf{z}\right)  ,\q\forall\; \textbf{z}\in V^{o}_{\Psi,\lambda \left( j\right)  }.$$
Hence, (\ref{qjhsh236151}) holds for each $j\in \mathbb{N}$.
	
	\medskip

\textbf{Step 4}.
	We now construct the required plurisubharmonic exhaustion function $\eta  \in C^{\infty }_{F^{\infty }}\left( V\right)  $.
	
	Let $\psi $ be a function satisfying \eqref{240122e1}. We choose below inductively positive numbers $\alpha_{1},\alpha_{2},\cdots $  and functions
 \begin{equation}\label{lsh236138e1}
 \eta _{k} \left( \cdot\right)  \triangleq \sum\limits^{k}_{j=1} \alpha_{j}\psi \left( \varrho_{j} \left(\cdot\right)  +2-j\right)
   \end{equation}
 such that $ \eta _{k}  \in C^{\infty }_{F^{\infty }}\left( V\right)    $  is plurisubharmonic on $V^{o}_{\Psi,\lambda \left( k\right)  }$ for each $k\in \mathbb{N}$, and
  \begin{equation}\label{lsh236138e2}
  \eta _{k} \left( \textbf{z}\right)   \geqslant \varrho \left( \textbf{z}\right)  ,\q\forall\; \textbf{z}\in V^{o}_{\Psi,\lambda \left( k\right)  }.
  \end{equation}

 Choose $\alpha_{1} \triangleq \frac{1}{\psi(1) }\sup\limits_{V_{\Psi,\lambda \left( 1\right)  }} \varrho $ and $\eta _{1} \left( \cdot\right)  \triangleq \alpha_{1}\psi \left( \varrho_{1} \left( \cdot\right)  +1\right) $. It is easy to check that $\eta _{1} \left( \textbf{z}\right)   \geqslant \varrho \left( \textbf{z}\right)$ for all $\textbf{z}\in V^{o}_{\Psi,\lambda \left( 1\right)  }$.
 Assume that some positive numbers $\alpha_{2},\cdots, \alpha_{k}$ are chosen so that the plurisubharmonic function $ \eta _{k} \left( \in C^{\infty }_{F^{\infty }}\left( V\right) \right)  $ on $V^{o}_{\Psi,\lambda \left( k\right)  }$ given by (\ref{lsh236138e1}) satisfies (\ref{lsh236138e2}).
Then, by \eqref{lsh236139},
$$
		\begin{aligned}
			s_k&\triangleq \inf_{n\in \mathbb{N} ,\textbf{z}\in V^{o}_{\Psi,\lambda \left( k+1\right)  }\setminus V^{o}_{\Psi,\lambda \left( k\right)  },\sum\limits^{n}_{i=1} \left| w_{i}\right|^{2}  =1,w_{1},\cdots ,w_{n}\in \mathbb{C}} \Big\{ \sum_{i,j=1}^n  \partial_{i} \overline{\partial_{j} } \eta _{k} \left( \textbf{z}\right)  w_{i}\overline{w_{j}} \Big\}
   \\&=\inf_{n\in \mathbb{N} ,\textbf{z}\in V^{o}_{\Psi,\lambda \left( k+1\right)  }\setminus V^{o}_{\Psi,\lambda \left( k\right)  },\sum\limits^{n}_{i=1} \left| w_{i}\right|^{2}  =1,w_{1},\cdots ,w_{n}\in \mathbb{C}} \Big\{ \sum^{k}_{\ell =1} \alpha_{\ell } \sum_{i,j=1}^n  \partial_{i} \overline{\partial_{j} } \psi \left( \varrho_{\ell } \left( \textbf{z}\right)  +2-\ell \right)  w_{i}\overline{w_{j}} \Big\}
    \\&\geqslant \sum^{k}_{\ell =1} \alpha_{\ell } \inf_{n\in \mathbb{N} ,\textbf{z}\in V^{o}_{\Psi,\lambda \left( k+1\right)  }\setminus V^{o}_{\Psi,\lambda \left( k\right)  },\sum\limits^{n}_{i=1} \left| w_{i}\right|^{2}  =1,w_{1},\cdots ,w_{n}\in \mathbb{C}} \Big\{ \sum_{i,j=1}^n  \partial_{i} \overline{\partial_{j} } \psi \left( \varrho_{\ell } \left( \textbf{z}\right)  +2-\ell \right)  w_{i}\overline{w_{j}} \Big\}
    \\&=\sum^{k}_{\ell =1} \alpha_{\ell } \inf_{n\in \mathbb{N} ,\textbf{z}\in V^{o}_{\Psi,\lambda \left( k+1\right)  }\setminus V^{o}_{\Psi,\lambda \left( k\right)  },\sum\limits^{n}_{i=1} \left| w_{i}\right|^{2}  =1, w_{1},\cdots ,w_{n}\in \mathbb{C}}\Big\{ \psi^{\prime \prime } \left( \varrho_{\ell } \left( \textbf{z}\right)  +2-\ell \right) \left|\sum_{1\leqslant j\leqslant n} \partial_{j} \varrho_{\ell } w_{j}\right|^{2}\Big\}  \\&\quad+ \sum^{k}_{\ell =1} \alpha_{\ell } \inf_{n\in \mathbb{N} ,\textbf{z}\in V^{o}_{\Psi,\lambda \left( k+1\right)  }\setminus V^{o}_{\Psi,\lambda \left( k\right)  },\sum\limits^{n}_{i=1} \left| w_{i}\right|^{2}  =1, w_{1},\cdots ,w_{n}\in \mathbb{C}}\Big\{ \psi^{\prime } \left( \varrho_{\ell } \left( \textbf{z}\right)  +2-\ell \right)  \sum_{i,j=1}^n  \partial_{i} \overline{\partial_{j} } \varrho_{\ell } \left( \textbf{z}\right)  \cdot w_{i}\overline{w_{j}} \Big\}
		\\&\geqslant \sum^{k}_{\ell =1} \alpha_{\ell } \inf_{n\in \mathbb{N} ,\textbf{z}\in V^{o}_{\Psi,\lambda \left( k+1\right)  }\setminus V^{o}_{\Psi,\lambda \left( k\right)  },\sum\limits^{n}_{i=1} \left| w_{i}\right|^{2}  =1, w_{1},\cdots ,w_{n}\in \mathbb{C}}\Big\{ \psi^{\prime } \left( \varrho_{\ell } \left( \textbf{z}\right)  +2-\ell \right)  \sum_{i,j=1}^n  \partial_{i} \overline{\partial_{j} } \varrho_{\ell } \left( \textbf{z}\right)  \cdot w_{i}\overline{w_{j}} \Big\}
\\&\geqslant \sum^{k}_{\ell =1} \alpha_{\ell } \inf_{\textbf{z}\in V^{o}_{\Psi,\lambda \left( k+1\right)  }\setminus V^{o}_{\Psi,\lambda \left( k\right)  }} \Big\{ \psi^{\prime } \left( \varrho_{\ell } \left( \textbf{z}\right)  +2-\ell \right)\\
  &\qq\qq\qq\qq\qq\qq\times \inf_{n\in \mathbb{N} ,\zeta \in V^{o}_{\Psi,\lambda \left( k+1\right)  },\sum\limits^{n}_{i=1} \left| w_{i}\right|^{2}  =1,w_{1},\cdots ,w_{n}\in \mathbb{C} } \sum_{i,j=1}^n  \partial_{i} \overline{\partial_{j} } \varrho_{\ell } \left( \zeta \right)  \cdot w_{i}\overline{w_{j}} \Big\}
  \\&\geqslant \sum^{k}_{\ell =1} \alpha_{\ell } \inf_{\textbf{z}\in V^{o}_{\Psi,\lambda \left( k+1\right)  }\setminus V^{o}_{\Psi,\lambda \left( k\right)  }} \left( \psi^{\prime } \left( \varrho_{\ell } \left( \textbf{z}\right)  +2-\ell \right)  \cdot \left( -C_{\ell ,k+1}\right)  \right)  \\&=-\sum^{k}_{\ell =1} \alpha_{\ell } C_{\ell ,k+1}\sup_{\textbf{z}\in V^{o}_{\Psi,\lambda \left( k+1\right)  }\setminus V^{o}_{\Psi,\lambda \left( k\right)  }} \left( \psi^{\prime } \left( \varrho_{\ell } \left( \textbf{z}\right)  +2-\ell \right)  \right)   >-\infty,
	\end{aligned}
$$
	where the last inequality follows from the fact that the function $\psi^{\prime } \left( \varrho_{\ell } \left( \textbf{z}\right)  +2-\ell \right)  $ being bounded on $V^{o}_{\Psi,\lambda \left( k+1\right)  }\setminus V^{o}_{\Psi,\lambda \left( k\right)  }$ .
	
	Let
$$\alpha_{k+1} \triangleq \max \left\{ \frac{\left| s_k\right|  }{\varepsilon_{k+1} \psi^{\prime } \left( 1\right)  } ,\frac{\sup\limits_{V_{\Psi,\lambda \left( k+1\right)  }} \varrho}{\psi \left( 1\right)  } \right\}  ,\q\eta _{k+1} \left( \cdot\right)  \triangleq \sum^{k+1}_{j=1} \alpha_{j} \psi \left( \varrho_{j} \left( \cdot\right)  +2-j\right).$$
Then, $\eta _{k+1} \left(  \cdot\right)  \geqslant \eta _{k} \left(  \cdot\right)  \geqslant \varrho \left(  \cdot\right)  $ on $V^{o}_{\Psi,\lambda \left( k\right)  }$, and $$
\begin{aligned}
\eta _{k+1} \left( \textbf{z}\right)  &\geqslant \alpha_{k+1} \psi \left( \varrho_{k+1} \left( \textbf{z}\right)  +2-k-1\right)  \\[3mm]
&\geqslant \alpha_{k+1} \psi \left( \varrho \left( \textbf{z}\right)  +2-k-1\right)  \geqslant \alpha_{k+1} \psi \left( 1\right)  \geqslant \varrho \left( \textbf{z}\right),\q\forall\;\textbf{z}\in V^{o}_{\Psi,\lambda \left( k+1\right)  }\setminus V^{o}_{\Psi,\lambda \left( k\right)  }\subset V\setminus V_{\varrho,k}^{o}.
\end{aligned}
$$
	
	By $\psi^{\left( j\right)  } \geqslant 0$ ($j=0,1,2$), $\varrho_{k+1}$ being plurisubharmonic on $V^{o}_{\Psi,\lambda{(k+1)}}$ and the conclusions $(2)$ and $(3)$ of Proposition \ref{dchcthhshzyxwdxzh23616}, we see that $\alpha_{k+1}\psi \left( \varrho_{k+1} \left( \textbf{z}\right)  +2-k-1\right)  $ is plurisubharmonic on $V^{o}_{\Psi,\lambda{(k+1)}}$, hence so is $ \eta _{k+1} $  on $V^{o}_{\Psi,\lambda{(k)}}$.
	
	For each $n\in \mathbb{N}$, $w_{1},w_{2},\cdots,w_{n}\in \mathbb{C}, \sum\limits^{n}_{i=1} \left| w_{i}\right|^{2}  =1$, we have $$
		\begin{aligned}
			 &\sum_{i,j=1}^n  \partial_{i} \overline{\partial_{j} } \psi \left( \varrho_{k+1} +1-k\right)  w_{i}\overline{w_{j}} \\& =\psi^{\prime \prime } \left( \varrho_{k+1} +1-k\right)  \left| \sum^{n}_{i=1} \partial_{i} \varrho \cdot w_{i}\right|^{2}  +\psi^{\prime } \left( \varrho_{k+1} +1-k\right)  \sum_{i,j=1}^n  \partial_{i} \overline{\partial_{j} } \varrho_{k+1} w_{i}\overline{w_{j}} \\&\geqslant \psi^{\prime } \left( \varrho_{k+1} +1-k\right)  \sum_{i,j=1}^n \partial_{i} \overline{\partial_{j} }  \varrho_{k+1} w_{i}\overline{w_{j}} \\&=\psi^{\prime } \left( \varrho_{k+1} +1-k\right)  \sum_{i,j=1}^n  \partial_{i} \overline{\partial_{j} } \left( \varrho_{k+1} -\varepsilon_{k+1} \left| \left| \textbf{z}\right|  \right|^{2}  \right)  w_{i}\overline{w_{j}}\\
&\q+\psi^{\prime } \left( \varrho_{k+1} +1-k\right)  \sum_{i,j=1}^n  \partial_{i} \overline{\partial_{j} } \left( \varepsilon_{k+1} \left| \left| \textbf{z}\right|  \right|^{2}  \right)  w_{i}\overline{w_{j}} \\&\geqslant \psi^{\prime } \left( \varrho_{k+1} +1-k\right)  \sum_{i,j=1}^n  \partial_{i} \overline{\partial_{j} } \left( \varepsilon_{k+1} \left| \left| \textbf{z}\right|  \right|^{2}  \right)  w_{i}\overline{w_{j}} =\varepsilon_{k+1} \psi^{\prime } \left( \varrho_{k+1} +1-k\right)  \geqslant \varepsilon_{k+1} \psi^{\prime } \left( \varrho +1-k\right)  \\& \geqslant \varepsilon_{k+1} \psi^{\prime } \left( 1\right)
		\end{aligned}
	$$
	on $V^{o}_{\Psi,\lambda {(k+1)}  }\setminus V^{o}_{\Psi,\lambda {(k)}  }\subset V^{o}_{\Psi,\lambda {(k+1)}  }\setminus V^{o}_{\varrho,k}$, where the second inequality comes from the fact that the function $\varrho_{k+1} (\cdot)-\varepsilon_{k+1} \left| \left| \cdot\right|  \right|^{2}  $ is plurisubharmonic on $V^{o}_{\Psi,\lambda{(k+1)}}$.
		Therefore,
$$\sum_{i,j=1}^n  \partial_{i} \overline{\partial_{j} } \eta _{k+1} \left( \textbf{z}\right)  w_{i}\overline{w_{j}} \geqslant s_k+\alpha_{k+1} \varepsilon_{k+1} \psi^{\prime } \left( 1\right) \geqslant s_k+\left| s_k\right|   \geqslant 0,\q\forall\;\textbf{z}\in V^{o}_{\Psi,\lambda {(k+1)}  }\setminus V^{o}_{\Psi,\lambda {(k)}  }.$$ By Lemma \ref{ghqd236130}, we conclude that $\eta _{k+1}$ is plurisubharmonic on $V^{o}_{\Psi,\lambda {(k+1)}  }$.
	
By $\{\varrho_{j}\}_{j=1}^\infty \subset C^{\infty }_{F^{\infty }}\left( V\right)$, it is clear that $\eta _{k+1}   \in C^{\infty }_{F^{\infty }}\left( V\right)  $, hence we complete the construction of the functions $\eta _{k}$ and the positive numbers $\alpha_{k}$ for $k \in \mathbb{N}$.
	
	Let $\eta  \triangleq \lim\limits_{k\rightarrow \infty } \eta _{k} $. Then, for each $k\in \mathbb{N}$ and $\ell \geqslant 3$, by  \eqref{qjhsh236151}, we have
$$\varrho_{k+\ell } \left( \textbf{z}\right)  \leqslant \varrho \left( \textbf{z}\right)  +1,\q\forall\; \textbf{z}\in V^{o}_{\varrho,k}\subset V^{o}_{\Psi,\lambda \left( k\right)  }\subset V^{o}_{\Psi,\lambda \left( k+\ell \right)  },$$ which means that
$$\varrho_{k+\ell } \left( \textbf{z}\right)  +2-k-\ell \leqslant \varrho \left( \textbf{z}\right)  +3-k-\ell <3-\ell \leqslant 0,\q  \forall \;\textbf{z}\in  V_{\varrho,k}^{o}.
$$
Therefore, $\eta _{k+\ell} \left( \textbf{z}\right)  =\eta _{k+2} \left( \textbf{z}\right) $ for all $\textbf{z}\in V^{o}_{\varrho,k}$ and $\ell\geqslant 2$, which leads to $\eta  \left( \textbf{z}\right)  =\eta _{k+2} \left( \textbf{z}\right) $ for each $\textbf{z}\in V^{o}_{\varrho,k}$.
It is then easy to show that $\eta  \in C^{\infty }_{F^{\infty }}\left( V\right)   $ is an exhaustion function on $V$ and satisfies that
$$
		\sum^{n}_{i,j=1} \overline{\partial_{j} } \partial_{i}\eta (\textbf{z}) \zeta_{i}\overline{\zeta_{j}} \geqslant 0,\q\forall\; \textbf{z}\in V,
	$$
for all $ n\in \mathbb{N}$ and $\zeta_{1},\cdots,\zeta_{n}\in \mathbb{C}$. By the conclusion $(d)$ of Lemma \ref{ghqd236130}, we conclude that $\eta  $ is a plurisubharmonic function on $V$.	This completes the proof of Theorem \ref{jntyshxnty23614}.
\end{proof}

Combining Lemma \ref{xntjnt236145} with Theorem \ref{jntyshxnty23614}, we obtain immediately the following consequence, which is the main result of this paper.
\begin{corollary}\label{cor5.1}
$V$ is a pseudo-convex domain of $\ell^{2}$ if and only if there exists an exhaustion function $\eta  \in C^{\infty }_{F^{\infty }}\left( V\right)  $ such that $\eta  $ is plurisubharmonic on $V$ .
\end{corollary}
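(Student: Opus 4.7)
The plan is to present Corollary \ref{cor5.1} as an immediate packaging of the two preceding results, and to explain succinctly why each implication is already in hand so that the reader sees the equivalence without re-doing the heavy lifting.

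First I would dispose of the easy direction ``$\Leftarrow$'' by quoting Lemma \ref{xntjnt236145} verbatim: since $C^{\infty}_{F^{\infty}}(V)\subset C^{2}(V)$, any plurisubharmonic exhaustion function $\eta\in C^{\infty}_{F^{\infty}}(V)$ satisfies the hypothesis of that lemma, and hence $V$ is pseudo-convex. The only thing worth flagging here is that the crucial input to Lemma \ref{xntjnt236145} is the conclusion (1) of Proposition \ref{dchcthhshzyxwdxzh23616} (plurisubharmonicity restricts to finite-dimensional slices) together with the fact that the sublevel set $V_{t}$ of an exhaustion function is uniformly included in $V$, which forces $M\cap V^{o}_{t}$ to be relatively compact in $M\cap V$ for every finite-dimensional subspace $M$.

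For the ``$\Rightarrow$'' direction I would simply invoke Theorem \ref{jntyshxnty23614}: when $V$ is pseudo-convex in $\ell^{2}$, that theorem produces an exhaustion function $\eta\in C^{\infty}_{F^{\infty}}(V)$ which is plurisubharmonic on $V$, exactly as required. This is of course where all the real work lives, but by this point in the paper it is already established. There is no additional argument to supply.

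Combining the two implications yields the biconditional and completes the proof. The only obstacle to anticipate is essentially notational or bookkeeping: one should make explicit the inclusion $C^{\infty}_{F^{\infty}}(V)\subset C^{2}(V)$ used to feed Theorem \ref{jntyshxnty23614}'s output into Lemma \ref{xntjnt236145}'s hypothesis (this is immediate from the very definition of $C^{\infty}_{F^{\infty}}(V)$ in Section \ref{section2}). All the substantive difficulty, namely producing the smooth plurisubharmonic exhaustion function in infinite dimensions and handling the dimension-free second-order estimate \eqref{lsh112361}, has been absorbed into Theorems \ref{shtqj23612} and \ref{jntyshxnty23614}, so the corollary itself requires no further calculation.
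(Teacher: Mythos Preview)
Your proposal is correct and follows exactly the paper's own approach: the corollary is stated immediately after Theorem \ref{jntyshxnty23614} with the one-line justification that combining Lemma \ref{xntjnt236145} with Theorem \ref{jntyshxnty23614} yields the biconditional. Your explicit remark that $C^{\infty}_{F^{\infty}}(V)\subset C^{2}(V)$ bridges the hypothesis of Lemma \ref{xntjnt236145} is a welcome clarification, but otherwise there is nothing to add.
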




\end{document}